\numberwithin{equation}{section}
\newtheorem{theorem}{Theorem}[section]
\newtheorem{lemma}[theorem]{Lemma}
\newtheorem{proposition}[theorem]{Proposition}
\newtheorem{definition}[theorem]{Definition}
\newtheorem{rem}[theorem]{Remark}
\newtheorem{Rem}{Remark}[section]
\newenvironment{remark}{\begin{Rem}\rm}{\end{Rem}}
\newcommand\BbR{\mathbb{R}}
\newcommand\R{\mathbb{R}}
\newcommand\N{\mathbb{N}}
\newcommand\BbZ{\mathbb{Z}}
\newcommand\be{\begin{equation}}
\newcommand\ee{\end{equation}}
\newcommand\bea{\begin{eqnarray}}
\newcommand\eea{\end{eqnarray}}
\newcommand\beaa{\begin{eqnarray*}}
\newcommand\eeaa{\end{eqnarray*}}
\newcommand\bR{{\Bbb R}}
\renewcommand{\epsilon}{\varepsilon}
\begin{document}

\title[traveling wave for a lattice dynamical system]
{Traveling waves for a lattice dynamical system arising in a diffusive endemic model}

\author[Y.-Y. Chen]{Yan-Yu Chen}
\address[Y.-Y. Chen]{Department of Mathematics, Tamkang University, Tamsui, New Taipei City, Taiwan}
\email{chenyanyu24@gmail.com}

\author[J.-S. Guo]{Jong-Shenq Guo}
\address[J.-S. Guo]{Department of Mathematics, Tamkang University, Tamsui, New Taipei City, Taiwan}
\email{jsguo@mail.tku.edu.tw}

\author[F. Hamel]{Fran{\c{c}}ois Hamel}
\address[F. Hamel]{Aix Marseille Universit\'e, CNRS, Centrale Marseille, Institut de Math\'ematiques de Marseille, UMR 7373, 13453 Marseille, France}
\email{francois.hamel@univ-amu.fr}

\thanks{Date: \today. Corresponding Author: J.-S. Guo}

\thanks{This work was supported in part by the Ministry of Science and Technology of the Republic of China under the grants 104-2811-M-032-005 and 102-2115-M-032-003-MY3. This work has been carried out in the framework of the Labex Archim\`ede (ANR-11-LABX-0033) and of the A*MIDEX project (ANR-11-IDEX-0001-02), funded by the ``Investissements d'Avenir" French Government program managed by the French National Research Agency (ANR). The research leading to these results has received funding from the European Research Council under the European Union's Seventh Framework Programme (FP/2007-2013) / ERC Grant Agreement n.321186 - ReaDi - Reaction-Diffusion Equations, Propagation and Modelling, and from the French ANR within the project NONLOCAL (ANR-14-CE25-0013).}

\thanks{{\em 2000 Mathematics Subject Classification.} Primary: 34K10, 92D25; Secondary: 35K57, 34C37}

\thanks{{\em Key words and phrases.} Endemic model, lattice dynamical system, upper-lower-solutions, traveling wave}

\begin{abstract}
This paper is concerned with a lattice dynamical system modeling the evolution of susceptible and infective individuals at discrete niches. We prove the existence of traveling waves connecting the disease-free state to non-trivial leftover concentrations. We also characterize the minimal speed of traveling waves and we prove the non-existence of waves with smaller speeds.
\end{abstract}

\maketitle

%%%%%%%%%%%%%%%%%%%%%%%%%%%%%%%%%%%%%%%%%%%%%%%
%%%%%%%%%%%%%%%%%%%%%%%%%%%%%%%%%%%%%%%%%%%%%%%

\section{Introduction}\label{intro}
\setcounter{equation}{0}

In this article, we consider the following lattice dynamical system (LDS)
\bea\label{SI}
  \left\{
  \begin{array}{rcl}
\dfrac{ds_n}{dt}&=&(s_{n+1}+s_{n-1}-2s_n)+\mu-\mu\,s_n-\beta\,s_n\,i_n,\; n\in \mathbb{Z},\vspace{3mm}\\
\dfrac{di_n}{dt}&=&d(i_{n+1}+i_{n-1}-2i_n)-\mu\,i_n+\beta\,s_n\,i_n-\gamma\,i_n,\; n\in \mathbb{Z},
  \end{array}
  \right.
\eea
where $s_n=s_n(t)$, $i_n=i_n(t)$, $t\in\BbR$, and $\mu,\beta,\gamma$ are positive constants. Here $s_n(t)$ and $i_n(t)$ represent the population density of the susceptible individuals and the infective individuals at niches $n$ at time $t$, $1$ and $d$ are the random migration coefficients for susceptible and infective population, respectively, and $\mu$ is regarded as the rate of the inflow of newborns into the susceptible population by assuming the total population of susceptible, infective and recovered individuals is normalized to be~$1$. The death rate of the susceptible population and the infective population are both assumed to be $\mu$, $\beta$ is the infective (transmission) coefficient and $\gamma$ is the recovered/removed coefficient. Actually, as in~\cite{H}, the equation for the recovered individuals $r_n(t)$ is given by
$$\dfrac{dr_n}{dt}=\gamma i_n-\mu r_n,$$
if there is no migration of the recovered individuals.

For system~\eqref{SI}, it easy to see there are two constant states $(1,0)$ and
\be\label{endemic}
(s^*,e^*):=\Big(\frac{1}{\sigma},\frac{\mu}{\beta}\,(\sigma-1)\Big),
\ee
where $\sigma:=\beta/(\mu+\gamma)$. In this paper, we always assume that $\sigma>1$, that is,
$$\beta>\mu+\gamma.$$
This means that, when the density of susceptible individuals is close to~$1$, the infective individuals have a positive per capita growth rate. Without migration, the steady state $(1,0)$ is dynamically unstable with respect to perturbations whose second component are positive, while the steady state $(s^*,e^*)$ is dynamically stable. System~\eqref{SI} is therefore called monostable.

In this paper, we are interested in the existence of traveling wave solutions of~\eqref{SI}. Here a traveling wave solution is a bounded solution which can be expressed by
$$s_n(t)=\phi(n+ct)\ \hbox{ and }\ i_n(t)=\psi(n+ct)$$
for $n\in\BbZ$ and $t\in\BbR$, for some nonnegative bounded functions $\phi, \psi$ on $\BbR$ (the wave profiles) and some constant $c$ (the wave speed). By setting $\xi=n+ct$ and substituting $(s_n(t),i_n(t))=(\phi(\xi),\psi(\xi))$ into~\eqref{SI}, we obtain
\be\label{eq:TW}
\left\{\begin{array}{rcl}
-c\,\phi'(\xi)+D[\phi](\xi)+\mu\,(1-\phi(\xi))-\beta\,\phi(\xi)\,\psi(\xi) & = & 0,\vspace{3pt}\\
-c\,\psi'(\xi)+d\,D[\psi](\xi)-(\mu+\gamma)\,\psi(\xi)+\beta\,\phi(\xi)\,\psi(\xi) & = & 0
\end{array}\right.
\ee
for all $\xi\in\R$, where
$$D[f](\xi):=f(\xi+1)+f(\xi-1)-2f(\xi).$$
Furthermore, from the epidemic point of view, we are interested in traveling wave solutions connecting the trivial disease-free state $(1,0)$ as $\xi\to-\infty$ (ahead of the front) and non-trivial states as $\xi\to+\infty$.

Define the constant $c^*$ by
\be\label{c*}
c^*:=\inf_{\lambda>0}\frac{d\,(e^{\lambda}+e^{-\lambda}-2)+\beta-\mu-\gamma}{\lambda}.
\ee
By the assumption $\beta>\mu+\gamma$ (i.e. $\sigma>1$), we know that $c^*=c^*(d, \beta, \mu, \gamma)$ is a well-defined real number (and the infimum in~\eqref{c*} is a minimum) and $c^*>0$.

Our main result is the following theorem on the existence of traveling waves for~\eqref{eq:TW} and the characterization of their minimal speed.

\begin{theorem}\label{th:exist}
For any $c\ge c^*$, there exists a bounded classical solution $(\phi, \psi)$ of the system~\eqref{eq:TW} such that
\be\label{nontrivial}
0<\phi<1\hbox{ in }\BbR,\ \ \psi>0\hbox{ in }\BbR
\ee
and
\be\label{bc-minus}
\lim_{\xi\rightarrow-\infty}(\phi(\xi),\psi(\xi))=(1,0),
\ee
together with
\be\label{bc-plus}
0\!<\!\liminf_{\xi\rightarrow+\infty}\phi(\xi)\!\le\!s^*\le\limsup_{\xi\rightarrow+\infty}\phi(\xi)\!<\!1\ \hbox{ and }\ 0\!<\!\liminf_{\xi\rightarrow+\infty}\psi(\xi)\!\le\!e^*\!\le\!\limsup_{\xi\rightarrow+\infty}\psi(\xi)\!<\!+\infty,
\ee
where $(s^*,e^*)$ is given in~\eqref{endemic}. Furthermore, for any $c<c^*$, there is no classical solution $(\phi, \psi)$ of the system~\eqref{eq:TW} satisfying~\eqref{nontrivial} and~\eqref{bc-minus}.
\end{theorem}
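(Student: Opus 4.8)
The two assertions are of a different nature and I would prove them by different means: the existence part by an upper--lower solution argument combined with Schauder's fixed point theorem when $c>c^*$, and then upgraded to $c=c^*$ by a compactness and limiting argument; the non--existence part by reducing, near $\xi=-\infty$, to a linear lattice equation whose characteristic function has no positive real zero when $c<c^*$.

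\emph{Existence for $c>c^*$.} It is convenient to pass to the variables $(u,\psi)=(1-\phi,\psi)$, for which system \eqref{eq:TW} vanishes at $(0,0)$ and the reaction terms become cooperative up to lower--order corrections. Adding large multiples $\kappa_1u$ and $\kappa_2\psi$ to the two equations renders the nonlinearities monotone on the relevant range, so that \eqref{eq:TW} can be recast as a fixed--point problem $(u,\psi)=T(u,\psi)$, where $T$ is obtained by solving the two linear equations $c\,v'+\kappa_iv=(\text{monotone right-hand side})$ through their explicit exponential integral kernels. The core of the proof is then the construction of ordered pairs of super/sub--solutions built from the exponential $e^{\lambda_1\xi}$, where $\lambda_1=\lambda_1(c)$ is the smaller positive root of $P(\lambda):=d(e^{\lambda}+e^{-\lambda}-2)+\beta-\mu-\gamma-c\lambda=0$ (this root exists exactly because $c>c^*$, and there is a second root $\lambda_2>\lambda_1$): as supersolutions one takes $\overline\phi\equiv1$ (which works since $-\beta\phi\psi\le0$) and $\overline\psi$ behaving like $e^{\lambda_1\xi}$ near $-\infty$, and as subsolutions $\underline\phi(\xi)=\max\{1-M_1e^{\eta\xi},0\}$ and $\underline\psi(\xi)=\max\{e^{\lambda_1\xi}(1-M_2e^{\delta\xi}),0\}$ with $\eta>0$ and $0<\delta<\lambda_2-\lambda_1$ small and $M_1,M_2$ large. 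Verifying the differential inequalities reduces to the identity $P(\lambda_1)=0$, the strict negativity $P(\lambda_1+\delta)<0$, and the facts that $\phi$ is close to $1$ where $\underline\psi>0$ and that $\overline\psi$ dominates the coupling term $\beta\phi\psi$; the junctions are treated in the usual one--sided sense. Granting in addition an a priori bound controlling $\psi$ near $+\infty$, the operator $T$ maps the closed convex set of profiles trapped between the barriers into itself, and is continuous and compact (the integral kernels regularize), so Schauder's theorem gives a solution with $0\le\phi\le1$, $0\le\psi$, $\phi\not\equiv1$ and $\psi\not\equiv0$; the strong maximum principle for the lattice operators then yields the strict inequalities \eqref{nontrivial}, and $\underline\phi\le\phi\le1$ together with $\underline\psi\le\psi\le\overline\psi$ give \eqref{bc-minus}.

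\emph{The critical speed $c^*$ and the limits at $+\infty$.} At $c=c^*$ the two roots coalesce into a double root $\lambda^*>0$ and the above barriers degenerate, so I would instead pick a sequence $c_k\downarrow c^*$, solutions $(\phi_k,\psi_k)$ produced above and normalized by a translation (pinning a prescribed level set of $\psi_k$, and using uniform--in--$k$ control on the behaviour at $+\infty$ to keep the normalization point bounded), and extract a locally uniform limit: the bounds $0<\phi_k<1$, $0<\psi_k\le C_0$, together with \eqref{eq:TW}, give uniform $C^2$ estimates, so Arzel\`a--Ascoli applies. The limit $(\phi,\psi)$ solves \eqref{eq:TW} with $c=c^*$, is non--trivial by the normalization, satisfies $0<\phi<1$ and $\psi>0$ by the maximum principle, and obeys \eqref{bc-minus} because $\psi_k$ decays at least like $e^{\lambda_1(c_k)\xi}$ with $\lambda_1(c_k)\to\lambda^*>0$, which yields a uniform exponential upper bound for $\psi_k$ on the negative half-line (and $\phi\to1$ then follows from the first equation once $\psi\to0$). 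For \eqref{bc-plus} I would extract limiting profiles along sequences $\xi_j\to+\infty$ realizing each of the four quantities $\liminf_{+\infty}\phi$, $\limsup_{+\infty}\phi$, $\liminf_{+\infty}\psi$ and $\limsup_{+\infty}\psi$: at an interior extremum of such a profile the first--order term drops out and $D[\cdot]$ has a fixed sign, and inserting this into \eqref{eq:TW} and letting the sequences run produces exactly the inequalities tying these quantities to $s^*=1/\sigma$ and $e^*$ (for instance, at a maximum of the limiting $\psi$--profile the second equation forces $\phi\ge s^*$ there, whence $\limsup_{+\infty}\phi\ge s^*$; the other inequalities follow by chaining such relations). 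The bound $\limsup_{+\infty}\psi<+\infty$ is the a priori estimate already used, and $\liminf_{+\infty}\phi>0$ is immediate from the first equation (a too--small $\phi$ is pushed up by the source term $\mu(1-\phi)$); the genuinely delicate point is $\liminf_{+\infty}\psi>0$, the persistence of the infection, which is not delivered by the barriers and requires a separate comparison argument showing that $\psi$ cannot come back to a neighbourhood of $0$, exploiting that $\beta\phi-\mu-\gamma$ stays bounded away from $0$ once $\phi$ is bounded below.

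\emph{Non--existence for $c<c^*$.} Suppose $(\phi,\psi)$ satisfies \eqref{nontrivial} and \eqref{bc-minus}. Fix $\ep>0$ so small that $\rho:=\beta(1-\ep)-\mu-\gamma>0$ and, since $c<c^*$, also $\inf_{\lambda>0}\lambda^{-1}(d(e^{\lambda}+e^{-\lambda}-2)+\rho)>c$, so that $P_\rho(\lambda):=d(e^{\lambda}+e^{-\lambda}-2)+\rho-c\lambda>0$ for every $\lambda>0$; choose $\xi_0\le-1$ with $\phi\ge1-\ep$ on $(-\infty,\xi_0]$, where the second equation reads $-c\psi'+d\,D[\psi]+b(\xi)\psi=0$ with $b\ge\rho$. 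Consider the (one-sided Laplace) integral $L(\lambda):=\int_{-\infty}^{\xi_0}e^{-\lambda\xi}\psi(\xi)\,d\xi$. A standard computation, multiplying the equation by $e^{-\lambda\xi}$ and integrating over $(-\infty,\xi_0]$, shows using $\psi>0$ and $b\ge\rho$ that $L(\lambda)<+\infty$ for all $\lambda>0$ (otherwise the left side would blow up at the abscissa of convergence while $P_\rho$ stays positive there) and that $P_\rho(\lambda)\,L(\lambda)\le C$ for all $\lambda>0$, with $C$ a finite constant depending only on $\psi$ near $\xi_0$; since $\psi$ is bounded below by a positive constant on $[\xi_0-1,\xi_0]$ and $\xi_0\le-1$, one has $L(\lambda)\ge c_1e^{\lambda}$ for some $c_1>0$, so that $c_1e^{\lambda}\le C/P_\rho(\lambda)\to0$ as $\lambda\to+\infty$ (because $P_\rho(\lambda)\to+\infty$), a contradiction; the sub-case $c\le0$ admits an even more direct treatment. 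Hence no such traveling wave exists. I expect the two heaviest points of the whole proof to be, on the existence side, the simultaneous construction of the coupled subsolutions $\underline\phi,\underline\psi$ (the exponents $\eta,\delta$ and the constants $M_1,M_2$ must be chosen compatibly and all the differential and junction inequalities must close up) together with the a priori bound on $\psi$ used near $+\infty$, and the proof that $\liminf_{\xi\to+\infty}\psi>0$, i.e.\ that the constructed wave genuinely invades rather than decaying back to the disease-free state; the limiting argument at $c=c^*$ and the non--existence argument above are comparatively routine once the linear lattice comparison principle is in hand.
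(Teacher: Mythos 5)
Your existence scheme coincides in outline with the paper's (the same exponential barriers built from the smaller root $\lambda_1(c)$ of $d(e^{\lambda}+e^{-\lambda}-2)-c\lambda+\beta-\mu-\gamma=0$, a monotone fixed-point operator obtained by adding large multiples of the unknowns, Schauder, then $c_k\downarrow c^*$ with a pinned level set of $\psi_k$), but it omits, or mis-states, precisely the steps that carry the difficulty. First, the upper barrier for $\psi$ is $e^{\lambda_1\xi}$, which is unbounded at $+\infty$, so no a priori bound on $\psi$ comes from the barriers; you ``grant'' such a bound in order to run Schauder directly on $\R$, which is circular, since the bound is a property of the solution you are constructing. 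The paper avoids this by solving a truncated problem on $[-l,l]$ (where $\overline{\psi}\le e^{\lambda_1 l}$ makes the convex set bounded), passing to the limit $l\to\infty$, and only then proving boundedness of $\psi$ via a discrete Harnack inequality for $\psi(\xi\pm1)/\psi(\xi)$, the fact that $\phi\to0$ wherever $\psi\to\infty$, and a Chen--Guo result forcing $\psi'/\psi$ to converge at $+\infty$ to a root $\omega>\lambda_2>\lambda_1$ of $d(e^{\omega}+e^{-\omega}-2)=c\omega+\mu+\gamma$, contradicting $\psi\le e^{\lambda_1\xi}$; none of this is in your sketch. Second, your hint for $\liminf_{\xi\to+\infty}\psi>0$ --- that $\beta\phi-\mu-\gamma$ stays bounded away from $0$ once $\phi$ is bounded below --- cannot work: by \eqref{bc-plus} itself, $\liminf_{\xi\to+\infty}\phi\le s^*=(\mu+\gamma)/\beta$, so $\beta\phi-\mu-\gamma$ is not bounded below by a positive constant near $+\infty$. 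The correct mechanism (the paper's key lemma that $\psi'>0$ wherever $\psi\le\epsilon$) is that smallness of $\psi$ forces $\phi$ close to $1$ through the $\phi$-equation, and the renormalized profile $\psi/\psi(0)$ then converges to a positive solution of the linearization at $(1,0)$, whose logarithmic derivative is necessarily positive. Third, at $c=c^*$ the uniform bound $\sup_k\|\psi_k\|_{L^\infty(\R)}<\infty$ is itself a non-trivial lemma, and your claim that the bound $\psi_k\le e^{\lambda_1(c_k)\xi}$ yields decay at $-\infty$ of the limit does not survive the translation by $\xi_k$ (which may tend to $+\infty$); the paper instead recovers $\psi(-\infty)=0$ from the monotonicity of $\psi$ below the level $\epsilon$ and the exclusion of the limit $e^*$ via the choice $\epsilon\le e^*$.

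For the non-existence part you take a genuinely different route (a Carr--Chmaj-type one-sided Laplace transform of $\psi$ near $-\infty$) from the paper, which instead translates the wave to $-\infty$, normalizes $\psi(\cdot+\xi_k)/\psi(\xi_k)$, and uses the Chen--Guo proposition to show that $\psi'/\psi$ must converge to a real root of $c\omega=d(e^{\omega}+e^{-\omega}-2)+\beta-\mu-\gamma$, which is impossible for $0\le c<c^*$ (with a separate reflection argument for $c<0$). Your route is viable in principle and would give a self-contained alternative, but as written it has two holes: the finiteness of $L(\lambda)=\int_{-\infty}^{\xi_0}e^{-\lambda\xi}\psi$ for even one $\lambda>0$ is not free, since $e^{-\lambda\xi}$ blows up as $\xi\to-\infty$ and only $\psi(-\infty)=0$ is assumed --- one must first prove $\int_{-\infty}^{\xi_0}\psi<\infty$ and bootstrap to genuine exponential decay, which itself uses a Harnack-type control of $\psi(\xi\pm1)/\psi(\xi)$ available only for $c\neq0$; and the shift and boundary terms give $P_\rho(\lambda)L(\lambda)\le C\,e^{-\lambda\xi_0}$ rather than $\le C$ (the contradiction survives, since $L(\lambda)\ge c_1e^{-\lambda\xi_0}$ then yields $P_\rho(\lambda)\le C/c_1$ against $P_\rho(\lambda)\to+\infty$, but the constants must be tracked). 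The deferred cases $c\le0$ are not routine and occupy a substantial part of the paper's Section 5.
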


Behind the front, as $\xi\to+\infty$, the leftover concentrations of susceptible and infective individuals are non-trivial. It is still an open question to know whether the traveling wave solutions converge to the endemic state $(s^*,e^*)$ as $\xi\to+\infty$, but Theorem~\ref{th:exist} asserts that both susceptible and infective individuals coexist behind the front and that the endemic state $(s^*,e^*)$ is the only possible constant leftover state. To show the convergence to the endemic state as $\xi\to+\infty$, the difficulties come from the fact that~\eqref{eq:TW} is a system and is non-local (such issues also arise for equations with non-local nonlinear interaction, see e.g.~\cite{AC,ABVV,BNPR,FZ,FH,GL,GVA,NPT,NRRP}). We also point out that the random migration coefficient $d$ for the infective individuals is any arbitrary positive real number and is therefore in general different from that for the susceptible individuals. Furthermore, we mention that, due to the transmission term
 s with opposite signs, the systems~\eqref{SI} and~\eqref{eq:TW} are not monotone (ne
 ither cooperative nor competitive) and therefore do not satisfy the maximum principle.

Let us finally mention some references on related problems. Actually, there is a vast literature on the study of traveling wave solutions for lattice dynamical systems or discrete versions of continuous parabolic partial differential equations. For monostable equations or monostable monotone systems, we refer to e.g.~\cite{CFG,CG1,CG2,FWZ1,FWZ2,GH,GW1,GW3,HK,HZ2,MWZ,MZ,XM,ZHH}. Waves for bistable lattice dynamical systems have been studied in e.g.~\cite{CMV,CGW,CMS,CF1,CF2,CF3,CF4,GW2,HZ,HHV,K1,K2,LW,Z1,Z2}.

\begin{rem}{\rm
Notice that the necessity condition $c\ge c^*$ holds for any traveling wave $(\phi,\psi)$ satisfying~\eqref{eq:TW},~\eqref{nontrivial} and~\eqref{bc-minus}. The limiting conditions~\eqref{bc-plus} or the boundedness of $\psi$ are not used here.}
\end{rem}

\noindent{\bf{Outline of the paper.}} Sections~\ref{sec2} and~\ref{sec3} are devoted to the proof of the existence of a traveling wave in case $c>c^*$, with some preliminaries on lower and upper solutions in Section~\ref{sec2}. Approximated solutions in bounded domains are constructed and the traveling wave solving~\eqref{eq:TW} is obtained by passing to the limit in the whole real line. Some intricate issues are to show that the limiting $\psi$ component is bounded and that the leftover concentrations are non-trivial. Section~\ref{secc*} is devoted to the proof of the existence of a traveling wave for the minimal speed $c^*$, by passing to the limit $c_k\to(c^*)^+$, after a suitable shift of the origin and after showing that the solutions with speed $c_k$ are uniformly bounded. Lastly, Section~\ref{sec5} is concerned with the proof of the necessity condition $c\ge c^*$ for any traveling wave satisfying~\eqref{nontrivial} and~\eqref{bc-minus}.

%%%%%%%%%%%%%%%%%%%%%%%%%%%%%%%%%%%%%%%%%%%%%%%
%%%%%%%%%%%%%%%%%%%%%%%%%%%%%%%%%%%%%%%%%%%%%%%

\section{Preliminaries}\label{sec2}
\setcounter{equation}{0}

In this section, we always assume that $c>c^*$. Then the equation
\be\label{psi-char1}
d\,(e^{\lambda}+e^{-\lambda}-2)-c\,\lambda+\beta-\mu-\gamma=0
\ee
has two positive roots $\lambda_1$ and $\lambda_2$ with $0<\lambda_1<\lambda_2$. Notice that
$$d\,(e^{\lambda}+e^{-\lambda}-2)-c\,\lambda+\beta-\mu-\gamma<0$$
for all $\lambda\in(\lambda_1,\lambda_2)$.

%%%%%%%%%%%%%%%%%%%%%%%%%%%%%%%%%%%%%%%%%%%%%%%

\subsection{Upper and lower solutions}

First, we define the notion of upper solution $(\overline{\phi}, \overline{\psi})$ and lower solution $(\underline{\phi}, \underline{\psi})$ of~\eqref{eq:TW} as follows.

\begin{definition}
If $\overline{\phi}$, $\overline{\psi}$, $\underline{\phi}$, $\underline{\psi}$ are continuous in $\R$, of class $C^1$ on $\BbR\setminus \mathcal{F}$ for some finite set $\mathcal{F}$ and if they satisfy the following inequalities
\bea
&&D[\overline{\phi}](\xi)-c\,\overline{\phi}'(\xi)+\mu\,(1-\overline{\phi}(\xi))-\beta\,\overline{\phi}(\xi)\,\underline{\psi}(\xi)\le0,\label{i1}\\
&&D[\underline{\phi}](\xi)-c\,\underline{\phi}'(\xi)+\mu\,(1-\underline{\phi}(\xi))-\beta\,\underline{\phi}(\xi)\,\overline{\psi}(\xi)\ge0,\label{i2}\\
&&d\,D[\overline{\psi}](\xi)-c\,\overline{\psi}'(\xi)-(\mu+\gamma)\,\overline{\psi}(\xi)+\beta\,\overline{\phi}(\xi)\,\overline{\psi}(\xi)\le0,\label{i3}\\
&&d\,D[\underline{\psi}](\xi)-c\,\underline{\psi}'(\xi)-(\mu+\gamma)\,\underline{\psi}(\xi)+\beta\,\underline{\phi}(\xi)\,\underline{\psi}(\xi)\ge0\label{i4}
\eea
for all $\xi\in\BbR\setminus \mathcal{F}$, then the functions $(\overline{\phi}, \overline{\psi})$, $(\underline{\phi}, \underline{\psi})$ are called a pair of upper and lower solutions of~\eqref{eq:TW}.
\end{definition}

Following~\cite{BHKR,FGW}, we introduce
\bea\label{v2}
\overline{\phi}(\xi)&=&1,\quad \overline{\psi}(\xi)=e^{\lambda_1\xi},\quad\ \ \xi\in\BbR,\label{upper}\\
\underline{\phi}(\xi)&=&\left\{\begin{array}{rll}
&1-\rho\,e^{\theta\xi},& \quad\ \ \ \xi\le \xi_1,\vspace{3mm}\\
&0,& \quad\ \ \ \xi\ge \xi_1,\end{array}\right.\label{u2}\\
\underline{\psi}(\xi)&=&\left\{\begin{array}{rll}
&e^{\lambda_1\xi}-q\,e^{\eta\lambda_1\xi},& \xi\le \xi_2,\vspace{3mm}\\
&0,& \xi\ge \xi_2,\end{array}\right.\label{underpsi}
\eea
where
\be\label{xi12}
\xi_1:=-\frac{\ln\rho}{\theta}\ \hbox{ and }\ \xi_2:=-\frac{\ln q}{(\eta-1)\,\lambda_1}.
\ee
Here the constants $\theta$, $\rho$, $\eta$ and $q$ are chosen {\it in sequence} such that the following assumptions (A1)-(A4) hold:
\begin{enumerate}
\item[(A1)]$\theta>0$ is small enough such that $0<\theta<\lambda_1$ and $e^{\theta}+e^{-\theta}-2-c\,\theta-\mu<0$,
\item[(A2)]$\rho>\max\left\{1, \dfrac{\beta}{-(e^{\theta}+e^{-\theta}-2-c\,\theta-\mu)}\right\}\ge1>0$,
\item[(A3)]$\eta\in(1, \min\{1+\theta/\lambda_1, \lambda_2/\lambda_1\})$ such that
$$d\,(e^{\eta\lambda_1}+e^{-\eta\lambda_1}-2)-c\,\eta\,\lambda_1+\beta-\mu-\gamma<0,$$
\item[(A4)]$q>\max\left\{e^{(1-\eta)\lambda_1\xi_1}, \dfrac{\beta\,\rho}{-(d\,(e^{\eta\lambda_1}+e^{-\eta\lambda_1}-2)-c\,\eta\,\lambda_1+\beta-\mu-\gamma)}\right\}>0$.
\end{enumerate}

Note that we have
$$\xi_2=-\dfrac{\ln q}{(\eta-1)\lambda_1}<\xi_1=-\dfrac{\ln\rho}{\theta}<0.$$
Also, it easy to see that
$$\max\{0,1-\rho\,e^{\theta\xi}\}\le\underline{\phi}(\xi)\le\overline{\phi}(\xi)=1,\quad\max\{0,e^{\lambda_1\xi}-q\,e^{\eta\lambda_1\xi}\}\le\underline{\psi}(\xi)\le\overline{\psi}(\xi)=e^{\lambda_1\xi}$$
for all $\xi\in\BbR$.

The next lemma gives the existence of a pair of upper and lower solutions.

\begin{lemma}\label{lem:UL}
The functions $(\overline{\phi}, \overline{\psi})$ and $(\underline{\phi}, \underline{\psi})$ defined by~\eqref{upper}-\eqref{underpsi} are a pair of upper and lower solutions of~\eqref{eq:TW}.
\end{lemma}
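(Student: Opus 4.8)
The plan is to verify the four differential inequalities \eqref{i1}--\eqref{i4} directly by plugging in the explicit functions from \eqref{upper}--\eqref{underpsi}, splitting the real line into the regions determined by the break points $\xi_1$ and $\xi_2$ and using the sign conditions built into (A1)--(A4). Since $\overline\phi\equiv 1$, inequality \eqref{i1} reads $\mu(1-1)-\beta\cdot 1\cdot\underline\psi(\xi)=-\beta\,\underline\psi(\xi)\le 0$, which holds because $\underline\psi\ge 0$ everywhere. For \eqref{i3}, with $\overline\psi(\xi)=e^{\lambda_1\xi}$ and $\overline\phi\equiv 1$, the term $d\,D[\overline\psi]-c\,\overline\psi'-(\mu+\gamma)\overline\psi+\beta\,\overline\psi$ equals $e^{\lambda_1\xi}\bigl(d(e^{\lambda_1}+e^{-\lambda_1}-2)-c\lambda_1+\beta-\mu-\gamma\bigr)=0$ by the definition \eqref{psi-char1} of $\lambda_1$; so \eqref{i3} holds with equality. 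These two are immediate and I would dispose of them first.

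The substantive work is in \eqref{i2} and \eqref{i4}, each of which must be checked on the two pieces of its domain. For \eqref{i2}: on $\xi\ge\xi_1$ we have $\underline\phi=0$, so the left side is $\mu\cdot 1-0=\mu>0$, as desired (and one should remark that at $\xi_1$ the function $\underline\phi$ has a convex-type corner, i.e. the left derivative exceeds the right derivative, which is the correct one-sided condition for a lower solution built from a maximum-of-pieces; here actually $\underline\phi$ is of the "truncation from below" type and the finite set $\mathcal F$ absorbs the kink). On $\xi\le\xi_1$, write $\underline\phi=1-\rho e^{\theta\xi}$ and note that for $\xi\le\xi_1$ we also need the value of $\underline\phi(\xi\pm1)$; since $\underline\phi\ge 1-\rho e^{\theta\xi}$ pointwise (with equality only left of $\xi_1$) and the coefficient structure is right, one computes
\[
D[\underline\phi](\xi)-c\,\underline\phi'(\xi)+\mu(1-\underline\phi(\xi))-\beta\,\underline\phi(\xi)\,\overline\psi(\xi)\ge -\rho e^{\theta\xi}\bigl(e^\theta+e^{-\theta}-2-c\theta-\mu\bigr)-\beta e^{\lambda_1\xi},
\]
using $\underline\phi\le 1$ to bound $-\beta\underline\phi\,\overline\psi\ge-\beta e^{\lambda_1\xi}$ and $\mu(1-\underline\phi)=\mu\rho e^{\theta\xi}$. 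Since $\theta<\lambda_1$ (A1), we have $e^{\lambda_1\xi}\le e^{\theta\xi}$ for $\xi\le 0$, so the right side is bounded below by $\rho e^{\theta\xi}\bigl[-(e^\theta+e^{-\theta}-2-c\theta-\mu)-\beta/\rho\bigr]$, which is $\ge 0$ exactly because of the choice of $\rho$ in (A2). One has to be slightly careful that the $D[\underline\phi]$ term, when $\xi$ is within distance $1$ of $\xi_1$, involves the truncated (larger) values of $\underline\phi$, but truncation only increases $D[\underline\phi]$ relative to the untruncated exponential, so the inequality is preserved; I would spell this monotonicity-of-$D$-under-upward-truncation point out explicitly since it is the one genuinely non-mechanical observation.

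Inequality \eqref{i4} is handled the same way and is the main obstacle, because it is where all four constants interact. On $\xi\ge\xi_2$ we have $\underline\psi=0$ and the left side is $d\,D[\underline\psi](\xi)\ge 0$ (again $D$ of an upward truncation of a nonnegative function is nonnegative), so \eqref{i4} holds. On $\xi\le\xi_2$, substitute $\underline\psi=e^{\lambda_1\xi}-q e^{\eta\lambda_1\xi}$ and $\underline\phi\ge 1-\rho e^{\theta\xi}$; the $e^{\lambda_1\xi}$-terms cancel by \eqref{psi-char1}, the $e^{\eta\lambda_1\xi}$-terms combine with coefficient $-q\bigl(d(e^{\eta\lambda_1}+e^{-\eta\lambda_1}-2)-c\eta\lambda_1+\beta-\mu-\gamma\bigr)$, which is positive by (A3), and the cross term $\beta\underline\phi\,\underline\psi$ produces, via $\underline\phi\ge 1-\rho e^{\theta\xi}$, an error bounded by $\beta\rho e^{\theta\xi}\cdot e^{\lambda_1\xi}\le\beta\rho e^{(\eta\lambda_1)\xi}$ once one uses $\eta<1+\theta/\lambda_1$ (so $\theta+\lambda_1\ge\eta\lambda_1$, and $\xi\le 0$), plus the already-present $-\beta q e^{\theta\xi}e^{\eta\lambda_1\xi}\ge 0$ contribution which we simply discard. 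Collecting, the left side of \eqref{i4} on $\xi\le\xi_2$ is bounded below by
\[
e^{\eta\lambda_1\xi}\Bigl[-q\bigl(d(e^{\eta\lambda_1}+e^{-\eta\lambda_1}-2)-c\eta\lambda_1+\beta-\mu-\gamma\bigr)-\beta\rho\Bigr]\ge 0,
\]
which is nonnegative precisely by the lower bound on $q$ in (A4). The condition $q>e^{(1-\eta)\lambda_1\xi_1}$ in (A4) and $\eta<\lambda_2/\lambda_1$ are the bookkeeping constraints ensuring $\xi_2<\xi_1$ and $\underline\psi\le\overline\psi$, which were already recorded in the excerpt; I would close by noting that the one-sided (corner) inequalities at $\xi_1,\xi_2$ go the right way because each lower solution is a maximum of a smooth piece and $0$, and each upper solution is smooth, so no extra work is needed there.
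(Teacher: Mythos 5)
Your proof is correct and follows essentially the same route as the paper's: direct verification of \eqref{i1}--\eqref{i4} piece by piece, with \eqref{i1} and \eqref{i3} immediate, and \eqref{i2}, \eqref{i4} reduced via the monotonicity of $D[\cdot]$ under upward truncation and the bounds $e^{(\lambda_1-\theta)\xi}\le 1$, $e^{(\theta+(1-\eta)\lambda_1)\xi}\le 1$ for $\xi\le 0$ to exactly the sign conditions in (A2) and (A4). The only blemish is a sign slip in the discarded cross term in \eqref{i4}: the term you throw away is $+\beta\,\rho\,q\,e^{(\theta+\eta\lambda_1)\xi}\ge 0$ (not $-\beta\,q\,e^{(\theta+\eta\lambda_1)\xi}$), and since it is indeed nonnegative the discard is legitimate and the argument stands.
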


\begin{proof}
First, the functions $\overline{\phi}$ and $\overline{\psi}$ are of class $C^1(\R)$ and the inequalities~\eqref{i1} and~\eqref{i3} hold on $\BbR$, since
$$\left\{\begin{array}{l}
D[\overline{\phi}](\xi)-c\,\overline{\phi}'(\xi)+\mu\,(1-\overline{\phi}(\xi))-\beta\,\overline{\phi}(\xi)\,\underline{\psi}(\xi)=-\beta\,\underline{\psi}(\xi)\le0,\vspace{3pt}\\
d\,D[\overline{\psi}](\xi)-c\,\overline{\psi}'(\xi)-(\mu+\gamma)\,\overline{\psi}(\xi)+\beta\,\overline{\phi}(\xi)\,\overline{\psi}(\xi)\\
\quad =e^{\lambda_1\xi}\,\big[d\,(e^{\lambda_1}+e^{-\lambda_1}-2)-c\,\lambda_1+\beta-\mu-\gamma\big]=0
\end{array}\right.$$
for all $\xi\in\BbR$.

Next, the function $\underline{\phi}$ is continuous in $\R$ and of class $C^1(\R\backslash\{\xi_1\})$ and we would like to show that~\eqref{i2} holds for $\xi\neq\xi_1$. For $\xi>\xi_1$, this is trivial since $\underline{\phi}(\xi)=0$. When $\xi<\xi_1\,(<0)$, we have $\underline{\phi}(\xi)=1-\rho e^{\theta\xi}$ and so
\beaa
&&D[\underline{\phi}](\xi)-c\,\underline{\phi}'(\xi)+\mu\,(1-\underline{\phi}(\xi))-\beta\,\underline{\phi}(\xi)\,\overline{\psi}(\xi)\\
&=&1-\rho\,e^{\theta(\xi+1)}+1-\rho\,e^{\theta(\xi-1)}-2+2\,\rho\,e^{\theta\xi}+c\,\theta\,\rho\,e^{\theta\xi}+\mu\,\rho\,e^{\theta\xi}-\beta\,e^{\lambda_1\xi}+\beta\,\rho\,e^{(\theta+\lambda_1)\xi}\\
&\ge&e^{\theta\xi}\,\big[-\rho\,(e^{\theta}+e^{\theta}-2\,c\,\theta-\mu)-\beta\,e^{(\lambda_1-\theta)\xi}\big]\\
&\ge&\beta\,e^{\theta\xi}\big[1-e^{(\lambda_1-\theta)\xi}\big]\ge0
\eeaa
by $\theta<\lambda_1$ and the choice of $\rho$.

Finally, the function $\underline{\psi}$ is continuous in $\R$ and of class $C^1(\R\backslash\{\xi_2\})$ and we claim that~\eqref{i4} holds for $\xi\neq\xi_2$. Clearly,~\eqref{i4} holds for $\xi>\xi_2$. For the case $\xi<\xi_2$, due to $\xi_2<\xi_1<0$, we know that $\underline{\phi}(\xi)=1-\rho e^{\theta\xi}$ and $\underline{\psi}(\xi)=e^{\lambda_1\xi}-qe^{\eta\lambda_1\xi}$. Then we obtain
\beaa
&\!\!&d\,D[\underline{\psi}](\xi)-c\,\underline{\psi}'(\xi)-(\mu+\gamma)\,\underline{\psi}(\xi)+\beta\,\underline{\phi}(\xi)\,\underline{\psi}(\xi)\\
&\!\!\ge\!\!&d\,\big[\!-q\,e^{\eta\lambda_1(\xi+1)}\!-q\,e^{\eta\lambda_1(\xi-1)}\!+2\,q\,e^{\eta\lambda_1\xi}\big]\!+c\,\eta\,\lambda_1\,q\,e^{\eta\lambda_1\xi}\!-(\beta-\mu-\gamma)\,q\,e^{\eta\lambda_1\xi}\!-\beta\,\rho\,e^{(\theta+\lambda_1)\xi}\\
&\!\!=\!\!&e^{\eta\lambda_1\xi}\Big\{\!-q\big[d\,(e^{\eta\lambda_1}+e^{-\eta\lambda_1}-2)-c\,\eta\,\lambda_1+\beta-\mu-\gamma\big]-\beta\,\rho\,e^{[\theta+(1-\eta)\lambda_1]\xi}\Big\}\\
&\!\!\ge\!\!&\beta\,\rho\,e^{\eta\lambda_1\xi}\big(1-e^{(\theta+(1-\eta)\lambda_1)\xi}\big)\ge0
\eeaa
by the choices of $\eta$ and $q$. Therefore, the proof of this lemma has been completed.
\end{proof}

%%%%%%%%%%%%%%%%%%%%%%%%%%%%%%%%%%%%%%%%%%%%%%%

\subsection{An auxiliary truncated problem}

Now, given $l>-\xi_2\,(>0)$, we consider the following truncated problem
\begin{eqnarray}\label{eq:TW-a}
\left\{\begin{array}{ll}
D[\phi]-c\,\phi'+\mu\,(1-\phi)-\beta\,\phi\,\psi=0 & \mbox{ in }[-l, l],\vspace{3pt}\\
d\,D[\psi]-c\,\psi'-(\mu+\gamma)\,\psi+\beta\,\phi\,\psi=0 & \mbox{ in }[-l, l],\vspace{3pt}\\
(\phi, \psi)=(\overline{\phi},\overline{\psi}) & \mbox{ on }(-\infty,-l),\vspace{3pt}\\
(\phi, \psi)=(\phi(l),\psi(l)) & \mbox{ on }(l,+\infty),\end{array}\right.
\end{eqnarray}
where
$$\left\{\begin{array}{ll}
\phi'(-l):=\displaystyle\lim_{h\searrow0}\frac{\phi(-l+h)-\phi(-l)}{h}, & \displaystyle\psi'(-l):=\lim_{h\searrow0}\frac{\psi(-l+h)-\psi(-l)}{h},\vspace{3pt}\\
\phi'(l):=\displaystyle\lim_{h\searrow0}\frac{\phi(l)-\phi(l-h)}{h}, & \displaystyle\psi'(l):=\lim_{h\searrow0}\frac{\psi(l)-\psi(l-h)}{h}.\end{array}\right.$$

Next, we give some notations. Set $\mathcal{C}^l:=C([-l, l])\times C([-l, l])$ and
$$\mathcal{S}^l:=\big\{(\phi, \psi)\in \mathcal{C}^l\ |\ \underline{\phi}\le\phi\le\overline{\phi},\; \underline{\psi}\le\psi\le\overline{\psi}\mbox{ in }[-l, l]\hbox{ and }(\phi,\psi)(-l)=(\overline{\phi},\overline{\psi})(-l)\big\}.$$
From the definition of $\overline{\phi}, \underline{\phi}, \overline{\psi}, \underline{\psi}$, we know that $0\le\phi\le1$ and $0\le\psi\le e^{\lambda_1l}$ in $[-l, l]$ for any $(\phi, \psi)\in\mathcal{S}^l$. Hence $\mathcal{S}^l$ is a nonempty bounded closed convex set in $(\mathcal{C}^l, \|\cdot\|)$, where $\|\cdot\|$ is the usual sup norm. For any $(\phi,\psi)\in\mathcal{S}^l$, we extend $(\phi,\psi)$ be continuity outside the interval $[-l,l]$ as in~\eqref{eq:TW-a} and we introduce the continuous functions $H_1^l(\phi,\psi)$ and $H_2^l(\phi,\psi)$ defined in $\R$ by
$$\left\{\begin{array}{rcl}
H_1^l(\phi, \psi)(\xi) & = & \alpha\,\phi(\xi)+D[\phi](\xi)+\mu\,(1-\phi(\xi))-\beta\,\phi(\xi)\,\psi(\xi),\vspace{3pt}\\
H_2^l(\phi, \psi)(\xi) & = & \alpha\,\psi(\xi)+d\,D[\psi](\xi)-(\mu+\gamma)\,\psi(\xi)+\beta\,\phi(\xi)\,\psi(\xi),\end{array}\right.$$
where $\alpha=\alpha^l$ is a positive constant such that
$$\alpha>\max\big\{2+\mu+\beta\,e^{\lambda_1l}, 2\,d+\mu+\gamma\big\}.$$
For $(\phi_i, \psi_i)\in \mathcal{S}^l$, $i=1,2$, with $\phi_1\le\phi _2$ and $\psi_1\le\psi_2$ in $[-l, l]$, we have
\be\label{ineq:H}
H_1^l(\phi_1, \psi_2)(\xi)\le H_1^l(\phi_1, \psi_1)(\xi)\le H_1^l(\phi_2, \psi_1)(\xi)\ \hbox{ and }\  H_2^l(\phi_1, \psi_1)(\xi)\le H_2^l(\phi_2, \psi_2)(\xi)
\ee
for all $\xi\in[-l, l]$. Finally, we define the operator $F^l=(F_1^l, F_2^l)$ from $\mathcal{S}^l$ into $\mathcal{C}^l$ as follows
$$\left\{\begin{array}{rcl}
F_1^l(\phi, \psi)(\xi)&=&\displaystyle e^{\alpha(-l-\xi)/c}\,\overline{\phi}(-l)+\int_{-l}^\xi\frac{e^{\alpha(z-\xi)/c}}{c}H_1^l(\phi, \psi)(z)\,dz,\ \ \xi\in[-l,l],\vspace{3pt}\\
F_2^l(\phi, \psi)(\xi)&=&\displaystyle e^{\alpha(-l-\xi)/c}\,\overline{\psi}(-l)+\int_{-l}^\xi\frac{e^{\alpha(z-\xi)/c}}{c}H_2^l(\phi, \psi)(z)\,dz,\ \ \xi\in[-l,l].\end{array}\right.$$
Note that a fixed point $(\phi, \psi)$ of the operator $F^l$, extended outside the interval $[-l,l]$ as in~\eqref{eq:TW-a}, gives a solution of~\eqref{eq:TW-a} which is continuous in $\R$ and of class $C^1(\R\backslash\{-l,l\})$.

To show the existence of such a fixed point, we apply Schauder's fixed point theorem in the next lemma.

\begin{lemma}\label{lem:exist-l}
Given $l>-\xi_2$, there exists a $C(\R)\times C(\R)$ and $C^1(\R\backslash\{-l,l\})\times C^1(\R\backslash\{-l,l\})$ solution $(\phi, \psi)$ of~\eqref{eq:TW-a} such that
\be\label{ineq:phi-psi-l}
0\le\underline{\phi}\le\phi\le1\quad \mbox{and}\quad 0\le\underline{\psi}\le\psi\le\overline{\psi}\quad\hbox{in }(-\infty, l].
\ee
\end{lemma}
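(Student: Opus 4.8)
The plan is to apply Schauder's fixed point theorem to the operator $F^l=(F^l_1,F^l_2)$ on the set $\mathcal{S}^l$. First I would verify that $F^l$ maps $\mathcal{S}^l$ into itself. The set $\mathcal{S}^l$ is nonempty, bounded, closed and convex in $(\mathcal{C}^l,\|\cdot\|)$, as already noted. Given $(\phi,\psi)\in\mathcal{S}^l$, the functions $H^l_1(\phi,\psi)$ and $H^l_2(\phi,\psi)$ are continuous, so $F^l(\phi,\psi)$ is well-defined and continuous on $[-l,l]$; moreover $F^l_1(\phi,\psi)(-l)=\overline{\phi}(-l)$ and $F^l_2(\phi,\psi)(-l)=\overline{\psi}(-l)$, so the boundary condition at $-l$ is built in. The heart of this step is the pair of inequalities $\underline{\phi}\le F^l_1(\phi,\psi)\le\overline{\phi}$ and $\underline{\psi}\le F^l_2(\phi,\psi)\le\overline{\psi}$ on $[-l,l]$. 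For the upper bounds, one uses the monotonicity~\eqref{ineq:H}: since $\phi\le\overline{\phi}$ and $\underline{\psi}\le\psi$, we get $H^l_1(\phi,\psi)\le H^l_1(\overline{\phi},\underline{\psi})$, and then the integral representation together with inequality~\eqref{i1} (rewritten as $c\,\overline{\phi}'\ge\alpha\overline{\phi}+D[\overline{\phi}]+\mu(1-\overline{\phi})-\beta\overline{\phi}\,\underline{\psi}-\alpha\overline{\phi}$, i.e. $\alpha\overline{\phi}+$(left side of~\eqref{eq:TW-a}$_1$ evaluated at $\overline{\phi},\underline{\psi}$)$\le c\,\overline{\phi}'+\alpha\overline{\phi}$, hence $H^l_1(\overline{\phi},\underline{\psi})\le c\,\overline{\phi}'+\alpha\overline{\phi}$, wait—more precisely one checks $\overline{\phi}$ is an upper solution of the integral equation) yields $F^l_1(\phi,\psi)\le\overline{\phi}$ on $[-l,l]$; the identical argument with~\eqref{i3} gives $F^l_2(\phi,\psi)\le\overline{\psi}$. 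For the lower bounds, one uses $\underline{\phi}\le\phi$, $\psi\le\overline{\psi}$, so $H^l_1(\phi,\psi)\ge H^l_1(\underline{\phi},\overline{\psi})$, and inequality~\eqref{i2} together with the fact that $\underline{\phi}$ is continuous and piecewise $C^1$ with a downward jump in derivative at $\xi_1$ (so that $\underline{\phi}$ is a subsolution of the integral equation across $\xi_1$ as well, the jump having the favorable sign) gives $F^l_1(\phi,\psi)\ge\underline{\phi}$; similarly inequality~\eqref{i4}, valid for $\xi\neq\xi_2$ with the right-sign jump at $\xi_2$, gives $F^l_2(\phi,\psi)\ge\underline{\psi}$. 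Here one also needs $\underline{\psi}(-l)\le\overline{\psi}(-l)$ and the definition of $\mathcal{S}^l$ to handle the boundary term. Thus $F^l(\mathcal{S}^l)\subset\mathcal{S}^l$.

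Next I would check compactness. Since $\mathcal{S}^l$ is bounded in $\mathcal{C}^l$, the functions $H^l_1(\phi,\psi)$, $H^l_2(\phi,\psi)$ are uniformly bounded on $[-l,l]$ as $(\phi,\psi)$ ranges over $\mathcal{S}^l$; differentiating the integral formulas, $(F^l_i(\phi,\psi))'$ is then uniformly bounded as well, so $F^l(\mathcal{S}^l)$ is equicontinuous, and by Arzel\`a--Ascoli $F^l$ is a compact operator. Continuity of $F^l$ on $\mathcal{S}^l$ follows from continuity of the nonlinear Nemytskii maps $(\phi,\psi)\mapsto H^l_i(\phi,\psi)$ in the sup norm together with dominated convergence in the integrals. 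Schauder's theorem then furnishes a fixed point $(\phi,\psi)\in\mathcal{S}^l$.

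Finally, I would translate the fixed point into a solution of~\eqref{eq:TW-a}. Extending $(\phi,\psi)$ outside $[-l,l]$ as prescribed in~\eqref{eq:TW-a}, the fixed-point identity $F^l(\phi,\psi)=(\phi,\psi)$ shows, upon differentiating, that $(\phi,\psi)$ is $C^1$ on $(-l,l)$ and solves the two differential equations there; the one-sided derivative conventions at $\pm l$ and the fact that the extensions are constant on $(l,\infty)$ and equal to $(\overline{\phi},\overline{\psi})$ on $(-\infty,-l)$ give the $C^1(\R\setminus\{-l,l\})$ regularity and the boundary behaviour. The bound~\eqref{ineq:phi-psi-l} on $(-\infty,l]$ follows on $[-l,l]$ from membership in $\mathcal{S}^l$ (together with $0\le\underline{\phi}$, $\phi\le\overline{\phi}=1$, and $0\le\underline{\psi}\le\psi\le\overline{\psi}$), and on $(-\infty,-l)$ from the explicit equalities $(\phi,\psi)=(\overline{\phi},\overline{\psi})=(1,e^{\lambda_1\xi})$ and the elementary inequalities relating $\underline{\phi},\underline{\psi}$ to $\overline{\phi},\overline{\psi}$ noted before the statement of Lemma~\ref{lem:UL}.

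The main obstacle I expect is the first step: carefully establishing the four differential inequalities at the level of the integral operator, i.e. that the integral form of the upper/lower solution property is correctly inherited from~\eqref{i1}--\eqref{i4}, paying attention to the choice of $\alpha$ (which guarantees the monotonicity~\eqref{ineq:H}), to the boundary term at $-l$, and above all to the corner points $\xi_1,\xi_2$ of $\underline{\phi},\underline{\psi}$, where the jump in the derivative must have the sign that is consistent with the subsolution inequality rather than against it. Everything else—compactness, continuity, and the passage from the fixed point back to~\eqref{eq:TW-a}—is routine.
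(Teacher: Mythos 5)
Your proposal is correct and follows essentially the same route as the paper: invariance of $\mathcal{S}^l$ under $F^l$ via the monotonicity~\eqref{ineq:H} and the upper/lower solution inequalities of Lemma~\ref{lem:UL}, complete continuity by Arzel\`a--Ascoli, and Schauder's fixed point theorem, with the fixed point then extended as in~\eqref{eq:TW-a}. The only remark worth making is that the corner points $\xi_1,\xi_2$ you flag as the main obstacle are in fact harmless in the integral formulation, since $\underline{\phi},\underline{\psi}$ are Lipschitz and the differential inequalities hold off a finite set, so the fundamental theorem of calculus applies directly.
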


\begin{proof}
First, we claim that $F^l(\mathcal{S}^l)\subset\mathcal{S}^l$. By~\eqref{ineq:H}, for any $(\phi, \psi)\in\mathcal{S}^l$, we have
$$F_1^l(\underline{\phi}, \overline{\psi})\le F_1^l(\phi, \psi)\le F_1^l(\overline{\phi}, \underline{\psi})\ \hbox{ and }\ F_2^l(\underline{\phi}, \underline{\psi})\le F_2^l(\phi, \psi)\le F_1^l(\overline{\phi}, \overline{\psi})\ \mbox{ in $[-l, l]$.}$$
By Lemma~\ref{lem:UL} and the definition of the upper and lower solutions, we also derive that
$$\underline{\phi}\le F_1^l(\underline{\phi}, \overline{\psi}),\ \ F_1^l(\overline{\phi}, \underline{\psi})\le\overline{\phi},\ \ \underline{\psi}\le F_2^l(\underline{\phi}, \underline{\psi})\ \hbox{ and }\ F_2^l(\overline{\phi}, \overline{\psi})\le\overline{\psi}\ \hbox{ in }[-l,l].$$
Hence $F^l(\mathcal{S}^l)\subset\mathcal{S}^l$.

By using Arzela-Ascoli Theorem, the operator $F^l:\mathcal{S}^l\rightarrow\mathcal{S}^l$ is completely continuous with respect to the sup norm. With the help of Schauder's fixed point theorem, we conclude that there exists a pair $(\phi, \psi)\in\mathcal{S}^l$ such that $(\phi, \psi)=F^l(\phi, \psi)$. Therefore, $(\phi, \psi)$, extended outside the interval $[-l,l]$ as in~\eqref{eq:TW-a}, solves~\eqref{eq:TW-a} and satisfies the properties stated in Lemma~\ref{lem:exist-l}.
\end{proof}

%%%%%%%%%%%%%%%%%%%%%%%%%%%%%%%%%%%%%%%%%%%%%%%
%%%%%%%%%%%%%%%%%%%%%%%%%%%%%%%%%%%%%%%%%%%%%%%

\section{Existence of a traveling wave for $c>c^*$}\label{sec3}
\setcounter{equation}{0}

%%%%%%%%%%%%%%%%%%%%%%%%%%%%%%%%%%%%%%%%%%%%%%%

\subsection{Proof of Theorem~\ref{th:exist} for $c>c^*$}\label{sec31}

In this section, we show Theorem~\ref{th:exist} for any fixed real number $c\in(c^*,+\infty)$. Namely, we show the existence of a bounded solution $(\phi, \psi)$ of~\eqref{eq:TW} satisfying $0<\phi<1$ in $\BbR$, $\psi>0$ in $\BbR$, and such that~\eqref{bc-minus} and~\eqref{bc-plus} hold.

First, we consider a positive increasing sequence $\{l_k\}_{k\in\mathbb{N}}$ such that $l_k\rightarrow\infty$ as $k\rightarrow\infty$, and $l_k>-\xi_2$ for all $k\in\mathbb{N}$, where $\xi_2<0$ is as in~\eqref{xi12}. By Lemma~\ref{lem:exist-l}, for each $k\in\mathbb{N}$, there exists a $C(\R)\cap C^1(\R\backslash\{-l_k,l_k\})$ solution $(\phi_k, \psi_k)$ of~\eqref{eq:TW-a} and~\eqref{ineq:phi-psi-l} for $l=l_k$. For each $K\in\mathbb{N}$ such that $l_K\ge2$, since $\overline{\psi}$ is bounded above in $[-l_K, l_K]$, it follows from~\eqref{ineq:phi-psi-l} that the sequences
$$\{\phi_k\}_{k\ge K},\quad \{\psi_k\}_{k\ge K},\quad \{\phi_k\psi_k\}_{k\ge K}$$
are uniformly bounded on $[-l_K, l_K]$. Also, the sequences $\{\phi_k'\}_{k\ge K}$ and $\{\psi_k'\}_{k\ge K}$ are uniformly bounded in $[-l_K+1, l_K-1]$, due to~\eqref{eq:TW-a} and~\eqref{ineq:phi-psi-l}. Since $\phi_k''(\xi)$ and $\psi_k''(\xi)$ can be expressed in terms of $\phi_k(\xi)$, $\psi_k(\xi)$, $\phi_k(\xi\pm1)$, $\psi_k(\xi\pm1)$, $\phi_k(\xi\pm2)$, $\psi_k(\xi\pm2)$, $\phi_k'(\xi)$ and $\psi_k'(\xi)$ in $[-l_K+2,l_K-2]$, one infers that the sequences $\{\phi_k''\}_{k\ge K}$ and $\{\psi_k''\}_{k\ge K}$ are uniformly bounded in $[-l_K+2, l_K-2]$. By using Arzela-Ascoli theorem on $[-l_K+2,l_K-2]$ for every $K\in\mathbb{N}$ large enough, we obtain a subsequence $\{(\phi_{k_j},\psi_{k_j})\}$ of $\{(\phi_{k},\psi_{k})\}$ through the diagonal process such that
$$\phi_{k_j}\rightarrow\phi,\ \psi_{k_j}\rightarrow\psi,\ \phi_{k_j}'\rightarrow\phi',\ \psi_{k_j}'\rightarrow\psi'\ \hbox{ as }j\to+\infty$$
uniformly in any compact subinterval of $\BbR$, for some functions $\phi\in C^1(\BbR)$ and $\psi\in C^1(\BbR)$. Then $(\phi, \psi)$ is a solution of the system~\eqref{eq:TW} with
\be\label{ineqphipsi}
0\le\underline{\phi}\le\phi\le1\ \hbox{ and }\ 0\le\underline{\psi}\le\psi\le\overline{\psi}\ \hbox{ in }\BbR.
\ee
By the definitions of $\underline{\phi}$, $\overline{\psi}$ and $\underline{\psi}$, it easy to check that
$$(\phi, \psi)(-\infty)=(1,0).$$
Notice also that, by differentiating the equations~\eqref{eq:TW}, one infers by induction that the functions $\phi$ and $\psi$ are of class $C^{\infty}$ in $\R$.

\begin{lemma}\label{lemnontrivial0}
The functions $\phi$ and $\psi$ are non-trivial, in the sense that
$$0<\phi<1\ \hbox{ and }\ \psi>0\ \hbox{ in }\BbR.$$
\end{lemma}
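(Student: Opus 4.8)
The plan is to exploit the structure of the two equations in~\eqref{eq:TW} separately, using that $\phi$ and $\psi$ are already known to satisfy $0\le\underline\phi\le\phi\le1$ and $0\le\underline\psi\le\psi\le\overline\psi$ in $\R$, with $(\phi,\psi)(-\infty)=(1,0)$ and in particular $\psi\not\equiv0$ (since $\underline\psi>0$ on a left half-line, as $e^{\lambda_1\xi}-qe^{\eta\lambda_1\xi}>0$ for $\xi$ very negative). I would treat positivity of $\psi$, then strict positivity of $\phi$, then $\phi<1$, in that order, each time rewriting the relevant scalar equation as a delayed/advanced ODE with a sign-definite zeroth-order coefficient and applying an elementary comparison/sliding argument adapted to the lattice (nonlocal) term.

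First, \emph{positivity of $\psi$}: suppose $\psi(\xi_0)=0$ for some $\xi_0\in\R$. Since $\psi\ge0$ everywhere, $\xi_0$ is an interior minimum, so $\psi'(\xi_0)=0$, while $D[\psi](\xi_0)=\psi(\xi_0+1)+\psi(\xi_0-1)-2\psi(\xi_0)=\psi(\xi_0+1)+\psi(\xi_0-1)\ge0$. Plugging into the second equation of~\eqref{eq:TW} at $\xi_0$ gives $d\,D[\psi](\xi_0)-(\mu+\gamma)\cdot0+\beta\phi(\xi_0)\cdot0=0$, hence $d(\psi(\xi_0+1)+\psi(\xi_0-1))=0$, so $\psi(\xi_0\pm1)=0$ as well. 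Iterating, $\psi\equiv0$ on $\xi_0+\Z$, and then a standard continuation argument along the ODE $-c\psi'+d\,D[\psi]-(\mu+\gamma)\psi+\beta\phi\psi=0$ — rewrite it as $c\psi'(\xi)=d\psi(\xi+1)+d\psi(\xi-1)+(\beta\phi(\xi)-\mu-\gamma-2d)\psi(\xi)$ and note the right side vanishes at any point where $\psi$ and its integer translates vanish — propagates $\psi\equiv0$ off the lattice by solving backwards/forwards from $\xi_0$, contradicting $\psi\not\equiv0$. (Alternatively, use that if $\psi$ vanishes at its infimum over a compact set together with all lattice translates then by the integral form $\psi\equiv0$ on an interval, then slide.) The cleanest route is probably: let $\Sigma=\{\psi=0\}$; the computation above shows $\Sigma$ is invariant under $\xi\mapsto\xi\pm1$ and, via the first-order ODE with locally bounded coefficients once the translates are known, that $\Sigma$ is open; it is also closed; so $\Sigma=\emptyset$ or $\Sigma=\R$, and the latter is excluded. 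Hence $\psi>0$ in $\R$.

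Next, \emph{$\phi>0$ and $\phi<1$}: for positivity, suppose $\phi(\xi_0)=0$. Then $\xi_0$ is a minimum of $\phi$, so $\phi'(\xi_0)=0$ and $D[\phi](\xi_0)=\phi(\xi_0+1)+\phi(\xi_0-1)\ge0$; the first equation in~\eqref{eq:TW} at $\xi_0$ reads $D[\phi](\xi_0)-0+\mu(1-0)-\beta\cdot0\cdot\psi(\xi_0)=0$, i.e. $\phi(\xi_0+1)+\phi(\xi_0-1)+\mu=0$, impossible since $\mu>0$ and $\phi\ge0$; so $\phi>0$ everywhere. For the upper bound, set $w:=1-\phi\ge0$; subtracting the first equation from the constant equation satisfied by $1$ gives $-c\,w'+D[w]-\mu w-\beta\phi\psi=0$, hence $c\,w'(\xi)=w(\xi+1)+w(\xi-1)+(-2-\mu)w(\xi)-\beta\phi(\xi)\psi(\xi)$. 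If $w(\xi_0)=0$ at some interior minimum, then $w'(\xi_0)=0$, $D[w](\xi_0)\ge0$, and evaluating: $0=D[w](\xi_0)-\mu\cdot0-\beta\phi(\xi_0)\psi(\xi_0)=w(\xi_0+1)+w(\xi_0-1)-\beta\phi(\xi_0)\psi(\xi_0)$; since we have already shown $\phi(\xi_0)>0$ and $\psi(\xi_0)>0$, the right-hand side is $\ge0-\beta\phi(\xi_0)\psi(\xi_0)$ only if $w(\xi_0\pm1)=\beta\phi(\xi_0)\psi(\xi_0)>0$ — wait, more carefully: $w(\xi_0+1)+w(\xi_0-1)=\beta\phi(\xi_0)\psi(\xi_0)>0$, which is consistent, so I need the sliding/invariance argument rather than a one-point contradiction. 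So instead let $Z=\{w=0\}=\{\phi=1\}$; at any $\xi_0\in Z$ one gets $w(\xi_0+1)+w(\xi_0-1)=\beta\phi(\xi_0)\psi(\xi_0)>0$, so in fact \emph{neither} $\xi_0+1$ nor $\xi_0-1$ can force anything directly, but $w\ge0$ with a zero at $\xi_0$ forces, from the first-order linear ODE $c\,w'=w(\cdot+1)+w(\cdot-1)+(-2-\mu)w-\beta\phi\psi$ viewed with the (nonnegative, continuous) forcing $g:=w(\cdot+1)+w(\cdot-1)-\beta\phi\psi$ treated as known, the representation $w(\xi)=e^{(2+\mu)(\xi-\xi_0)/c}\int_{\xi_0}^{\xi}\frac{1}{c}e^{-(2+\mu)(z-\xi_0)/c}g(z)\,dz$; but $w\ge0$ on both sides of $\xi_0$ then forces $g\le0$ near $\xi_0$ on one side and $g\ge0$ on the other, hence $g(\xi_0)=0$, i.e. $w(\xi_0+1)+w(\xi_0-1)=\beta\phi(\xi_0)\psi(\xi_0)>0$ — contradiction with $g(\xi_0)=0$. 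Therefore $Z=\emptyset$ and $\phi<1$ in $\R$.

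\emph{Main obstacle.} The only delicate point is that, because of the nonlocal term $D[\cdot]$, the classical strong maximum principle does not apply verbatim; I expect the real work to be in making the "invariance of the zero set under integer shifts plus openness via the first-order integral representation" argument airtight — in particular checking that once $\psi$ (resp. $w$) and all its integer translates vanish at one point, the scalar first-order ODE with continuous coefficients propagates the zero to a full neighborhood, and handling the sign of the forcing term carefully so that nonnegativity on both sides of a zero yields the contradiction. Everything else (the one-point computations showing $\phi>0$ and ruling out interior zeros of $\psi$) is immediate from $\mu>0$, $\beta>0$, $d>0$ and the already-established bounds $0\le\phi\le1$, $0\le\psi\le\overline\psi$, together with $\psi\not\equiv0$.
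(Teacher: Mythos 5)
Your arguments for $\phi>0$ and for the implication $\psi(\xi_0)=0\Rightarrow\psi(\xi_0\pm1)=0$ are exactly the paper's, but the proof does not close in two places. The more serious one is the bound $\phi<1$: you made a sign error when passing to $w=1-\phi$. Substituting $\phi=1-w$ into the first equation of~\eqref{eq:TW} gives $c\,w'=D[w]-\mu\,w+\beta\,\phi\,\psi$ (the transmission term enters with a \emph{plus} sign, not a minus), so at a zero $\xi_0$ of $w$ one gets $w(\xi_0+1)+w(\xi_0-1)=-\beta\,\phi(\xi_0)\,\psi(\xi_0)<0$, which contradicts $w\ge0$ outright. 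In other words, the one-point maximum-principle computation that you abandoned as ``consistent'' actually works (this is precisely the paper's argument, phrased with $\phi$ directly: at a point where $\phi=1$ one has $D[\phi]\le0$ and $\phi'=0$, while the equation forces $D[\phi]=\beta\,\psi>0$). Your replacement ``sliding'' argument is then circular: you derive $g(\xi_0)=0$, observe that this is equivalent to the identity you already obtained from the pointwise equation, and declare a contradiction with $g(\xi_0)=0$ itself — no contradiction is actually produced there.

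The second issue is the propagation step in the positivity of $\psi$. Knowing that $\psi$ vanishes on $\xi_0+\mathbb{Z}$ does not make the zero set open: at points $\xi\notin\xi_0+\mathbb{Z}$ the right-hand side of $c\,\psi'(\xi)=d\,\psi(\xi+1)+d\,\psi(\xi-1)+(\beta\phi(\xi)-\mu-\gamma-2d)\,\psi(\xi)$ involves values of $\psi$ at non-lattice translates, so neither ``solving forwards/backwards'' nor your open/closed dichotomy goes through as stated. The step can be repaired via the differential inequality $c\,\psi'\ge-(2d+\mu+\gamma)\,\psi$, which makes $\xi\mapsto\psi(\xi)e^{(2d+\mu+\gamma)\xi/c}$ nondecreasing and hence identically zero between consecutive lattice zeros (this is how the paper handles the analogous point for $\Psi_\infty$ in the proof of Lemma~\ref{lempsi'}). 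But the paper avoids the whole propagation issue for this lemma: since $\psi\ge\underline{\psi}>0$ on $(-\infty,\xi_2)$, one may take $\xi_0$ to be the \emph{first} zero of $\psi$, and then $\psi(\xi_0-1)=0$ with $\xi_0-1<\xi_0$ is already a contradiction. You noted the positivity of $\underline{\psi}$ on a left half-line only to conclude $\psi\not\equiv0$; using it to order the zeros makes the lattice-propagation machinery unnecessary.
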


\begin{proof}
Firstly, owing to the definition of $\underline{\psi}$, we have $\psi>0$ in $(-\infty, \xi_2)$. For contradiction, we assume that there exists a real number $\xi_0\in[\xi_2,+\infty)$ such that $\psi(\xi_0)=0$ and $\psi(\xi)>0$ for all $\xi<\xi_0$. Since $\psi\ge0$ in $\BbR$, we also have $\psi'(\xi_0)=0$. From the second equation of~\eqref{eq:TW}, we get that $\psi(\xi_0-1)=\psi(\xi_0+1)=0$, a contradiction to the definition of $\xi_0$.

Let us now show that $\phi>0$ over $\BbR$. Indeed, if $\phi(\xi^*)=0$ for some real number $\xi^*$, then
$$0=-c\,\phi'(\xi^*)+D[\phi](\xi^*)+\mu\,\big(1-\phi(\xi^*)\big)-\beta\,\phi(\xi^*)\,\psi(\xi^*)=-c\,\phi'(\xi^*)+D[\phi](\xi^*)+\mu>0,$$
since $\phi'(\xi^*)=0$, $D[\phi](\xi^*)\ge0$ and $\mu>0$. This contradiction leads to the inequality $\phi>0$ in $\BbR$.

Similarly, we claim that $\phi<1$ in $\BbR$ by a contradiction argument. If there exists a real number $\tilde{\xi}$ such that $\phi(\tilde{\xi})=1$, then
$$0=-c\,\phi'(\tilde{\xi})+D[\phi](\tilde{\xi})+\mu\,\big(1-\phi(\tilde{\xi})\big)-\beta\,\phi(\tilde{\xi})\,\psi(\tilde{\xi})=-c\,\phi'(\tilde{\xi})+D[\phi](\tilde{\xi})-\beta\,\psi(\tilde{\xi})<0,$$
since $\phi'(\tilde{\xi})=0$, $D[\phi](\tilde{\xi})\le0$ and $\psi(\tilde{\xi})>0$. This contradiction leads to the inequality $\phi<1$ in $\BbR$.
\end{proof}

The next main step consists in showing that the function $\psi$ is actually bounded. A first key-point is the following Harnack type property for equations of the type~\eqref{eq:TW} satisfied by the second component~$\psi$. We state this property in a more general framework.

\begin{lemma}\label{lemharnack}
Let $M$ be a positive real number. Then there exists a constant $C=C(M)>0$ such that, for any continuous functions $a$ and $b$ with $M^{-1}\le a(\xi)\le M$ and $b(\xi)\ge-M$ for all $\xi\in\R$ and for any positive $C^1(\R)$ function $u$ satisfying
$$u'(\xi)\ge a(\xi)\,u(\xi+1)+b(\xi)\,u(\xi)\ \hbox{ for all }\xi\in\R,$$
there holds
$$C^{-1}\le\frac{u(\xi+1)}{u(\xi)}\le C\ \hbox{ for all }\xi\in\R.$$
\end{lemma}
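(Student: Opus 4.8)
The plan is to derive from the differential inequality a bound on the growth ratio $u(\xi+1)/u(\xi)$ by isolating the discrete-shift term and exploiting positivity of $u$. First I would observe that since $u>0$, $a\ge M^{-1}>0$ and $b\ge -M$, the hypothesis
$$u'(\xi)\ge a(\xi)\,u(\xi+1)+b(\xi)\,u(\xi)\ge M^{-1}u(\xi+1)-M\,u(\xi)$$
immediately yields the forward bound: rearranging, $u(\xi+1)\le M\,u'(\xi)+M^2u(\xi)$. To turn this into the desired $u(\xi+1)/u(\xi)\le C$ I need to control $u'(\xi)/u(\xi)$ from above, i.e. control the logarithmic derivative $(\ln u)'$. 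The natural device is a Harnack-type iteration on unit intervals: I would show that $u$ cannot grow too fast on any interval of length $1$, because fast exponential growth of $u$ on $[\xi,\xi+1]$ would, via the inequality, force $u'$ to be large at $\xi$, which propagates backward and produces a contradiction with, say, the value at $\xi+1$ or a compactness/normalization argument.

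Concretely, the key step I anticipate is the following normalization argument. Suppose, for contradiction, that no uniform constant $C$ works; then there exist sequences of admissible $a_n,b_n,u_n$ and points $\xi_n$ with $u_n(\xi_n+1)/u_n(\xi_n)\to\infty$ (or $\to 0$; the lower bound $u(\xi+1)/u(\xi)\ge C^{-1}$ I expect to handle by a symmetric argument or by a direct estimate — see below). After translating so that $\xi_n=0$ and rescaling so that $u_n(0)=1$, I would use the inequality $u_n'\ge M^{-1}u_n(\cdot+1)-M\,u_n$ together with $u_n>0$ to get local bounds on $u_n$ and $u_n'$ on compact sets: the point is that $u_n' + M\,u_n\ge 0$, so $(e^{Mt}u_n(t))'\ge 0$, i.e. $e^{Mt}u_n(t)$ is nondecreasing. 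Hence for $t\le 0$ we have $u_n(t)\le e^{-Mt}u_n(0)=e^{-Mt}$, giving a uniform upper bound on $(-\infty,0]$; and for $t\ge 0$, $u_n(t)\ge e^{-Mt}u_n(0)=e^{-Mt}$, a uniform lower bound on $[0,\infty)$. Now feeding the lower bound at $t=1$, namely $u_n(1)\ge e^{-M}$, back into the inequality at $t=0$ together with an upper bound on $u_n$ near $0$ (obtained by integrating $u_n'\ge -M\,u_n$ forward from a point slightly negative, where $u_n$ is bounded), I would bound $u_n'(0)$ above — but the monotonicity of $e^{Mt}u_n(t)$ forces $u_n'(0)\ge -M$, and combining with the inequality $u_n'(0)\ge M^{-1}u_n(1)-M$ does not directly blow up, so the contradiction must instead come from iterating: $u_n(1)/u_n(0)$ large forces, via $u_n(1)\le M u_n'(0)+M^2 u_n(0)=M u_n'(0)+M^2$, that $u_n'(0)\to\infty$; but $e^{Mt}u_n(t)$ nondecreasing with $u_n\le e^{-Mt}$ on $(-\infty,0]$ gives $u_n(t)\le 1$ for $t\in[-1,0]$ hence $\int_{-1}^0 u_n'(s)\,ds=u_n(0)-u_n(-1)\le 1$, and since $u_n'(s)\ge -M u_n(s)\ge -M$ on that interval, $u_n'$ is bounded below there, so $u_n'$ cannot be uniformly large on a set of positive measure in $[-1,0]$; pushing the inequality $u_n'(s)\ge M^{-1}u_n(s+1)-M u_n(s)$ at such $s$ then bounds $u_n(s+1)$ for $s+1\in[0,1]$, and a short bootstrap closes the loop. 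For the lower bound $u(\xi+1)/u(\xi)\ge C^{-1}$, I would argue directly: from $u'(\xi)\ge M^{-1}u(\xi+1)-M u(\xi)$ and $u(\xi+1)>0$ we get $u'(\xi)\ge -M\,u(\xi)$, so $u(\xi+1)\ge u(\xi)+\int_\xi^{\xi+1}u'\ge u(\xi)-M\int_\xi^{\xi+1}u$; combined with the pointwise bound $u(s)\le e^{M(\xi+1-s)}u(\xi+1)$ for $s\in[\xi,\xi+1]$ coming again from monotonicity of $e^{Mt}u$, one solves for $u(\xi+1)/u(\xi)$ and obtains a positive lower bound depending only on $M$.

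The main obstacle, and the place where care is needed, is making the upper-bound argument genuinely uniform in $a$ and $b$ rather than merely local: the monotonicity trick $(e^{Mt}u)'\ge 0$ is clean and does most of the work, but converting ``$u(\xi+1)/u(\xi)$ large $\Rightarrow u'(\xi)$ large'' into a contradiction requires one more integration to see that $u'$ large at $\xi$ is incompatible with the total variation bound $u(\xi+1)-u(\xi-1)$ being controlled by the same monotonicity. I would structure the final write-up as: (i) record $u'\ge -Mu$ hence $t\mapsto e^{Mt}u(t)$ nondecreasing; (ii) deduce two-sided pointwise comparison $e^{-M|s-\xi|}\le u(s)/u(\xi)$ in the favorable direction on unit intervals; (iii) derive the lower ratio bound directly as above; (iv) derive the upper ratio bound by the normalized contradiction argument, invoking (i)–(ii) for compactness. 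No maximum principle for the system is needed — only the scalar inequality for $u$ — which is consistent with the remark that \eqref{eq:TW} itself is not monotone.
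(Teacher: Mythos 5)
Your opening step (the monotonicity of $\xi\mapsto v(\xi):=e^{M\xi}u(\xi)$, which follows from $u'\ge -Mu$) and your treatment of the lower bound $u(\xi+1)/u(\xi)\ge C^{-1}$ are correct and coincide with the paper's first step; in fact the monotonicity of $v$ gives $u(\xi)\le e^{M}u(\xi+1)$ in one line, without the integral detour you describe. The upper bound, however, contains a genuine gap: the contradiction argument you sketch does not close. After normalizing $u_n(0)=1$ and assuming $u_n(1)\to+\infty$, you deduce $u_n'(0)\to+\infty$ and try to derive a contradiction from the total variation of $u_n$ on $[-1,0]$. But the hypothesis is only a \emph{one-sided} bound on $u'$, so $u_n'$ being large at, or near, a single point is not contradictory; your measure-theoretic step only yields that $u_n(s+1)$ is bounded for $s$ in a large-measure subset of $[-1,0]$, i.e.\ that $u_n$ is bounded on a large-measure subset of $[0,1]$. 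This does not bound $u_n(1)$: the monotone quantity $v_n$ transfers upper bounds only \emph{backward} in $t$ (a bound on $u_n(1)$ controls $u_n(\tau)$ for $\tau<1$, not conversely), and nothing in the hypotheses prevents $u_n$ from jumping upward arbitrarily fast on a short interval just before $t=1$. (A small slip as well: on $[-1,0]$ one gets $u_n(t)\le e^{-Mt}\le e^{M}$, not $u_n(t)\le 1$.)

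The missing idea --- and the heart of the paper's proof --- is to propagate the largeness of $u(1)$ \emph{forward} via $v$ rather than to work near $0$ alone. Since $v$ is nondecreasing, $v(\xi+1)\ge v(1)$ for all $\xi\ge 0$, so the full inequality gives $v'(\xi)\ge a(\xi)\,u(\xi+1)\,e^{M\xi}\ge v(1)\,e^{-M}/M=u(1)/M$ on $[0,1]$; integrating from $0$ yields the quantitative lower bound $u(\xi)\ge\big(1+u(1)\,\xi/M\big)\,e^{-M\xi}$ on $[0,1]$, i.e.\ a bound growing linearly in $u(1)$. Feeding this back into $v'(\xi)\ge a(\xi)\,u(\xi+1)\,e^{M\xi}$ for $\xi\in[-1/2,0]$ (where $\xi+1\ge 1/2$) gives $v'(\xi)\ge\frac{e^{-M}}{M}\big(1+\frac{u(1)}{2M}\big)$ there, and integrating over $[-1/2,0]$ against $v(0)=1$ and $v(-1/2)\ge 0$ produces the explicit bound $u(1)\le 2M(2Me^{M}-1)$. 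This two-pass bootstrap is what must replace your argument around $u_n'(0)$; with it, no compactness or contradiction argument is needed at all.
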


In order not to lengthen too much the main line of the proof of Theorem~\ref{th:exist} with $c>c^*$, the proof of Lemma~\ref{lemharnack} is postponed in Section~\ref{sec32}.

Coming back to our solutions $(\phi,\psi)$ of~\eqref{eq:TW}, since $c>0$ and $\phi$ is nonnegative, it follows from Lemma~\ref{lemharnack} applied to the positive function $u=\psi$ solving $\psi'(\xi)\ge(d/c)\psi(\xi+1)-(2d/c+\mu/c+\gamma/c)\,\psi(\xi)$ that the functions $\xi\mapsto\psi(\xi\pm1)/\psi(\xi)$ are bounded in $\R$. Hence, from the equation~\eqref{eq:TW} itself and since $\phi$ is bounded, the function
$$\xi\mapsto\frac{\psi'(\xi)}{\psi(\xi)}$$
is therefore bounded too.

The following two lemmas deal with the behavior of $\phi$ and $\psi$ at $+\infty$ if $\limsup_{\xi\to+\infty}\psi(\xi)=+\infty$. The first one says that $\phi$ is small when $\psi$ is large. This property actually holds locally uniformly with respect to the speed $c$. It is stated in this more general framework since it will be used again in Section~\ref{secc*} to get the existence of a bounded solution~$(\phi,\psi)$ of~\eqref{eq:TW} with speed $c^*$.

\begin{lemma}\label{leminfty}
Let $0<\underline{c}\le\overline{c}$ be two given positive real numbers. Let $\{c_k\}$ be a sequence of real numbers in $[\underline{c},\overline{c}]$ and let $\{(\phi_k,\psi_k)\}$ be a sequence of solutions of~\eqref{eq:TW} with speed $c_k$ and satisfying~\eqref{nontrivial}. If $\{\xi_k\}$ is a sequence of real numbers such that $\psi_k(\xi_k)\to+\infty$ as $k\to+\infty$, then $\phi_k(\xi_k)\to0$ as $k\to+\infty$.
\end{lemma}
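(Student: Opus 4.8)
\textbf{Proof plan for Lemma~\ref{leminfty}.}

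The plan is to argue by contradiction and to exploit the first equation of~\eqref{eq:TW} as an elliptic-type inequality for $\phi_k$ in which a large value of $\psi_k$ forces $\phi_k$ to be small. Suppose the conclusion fails. Then, after extracting a subsequence, there exist $\delta>0$ and a sequence $\{\xi_k\}$ with $\psi_k(\xi_k)\to+\infty$ but $\phi_k(\xi_k)\ge\delta$ for all $k$. Translate the origin by setting $\tilde\phi_k(\xi):=\phi_k(\xi+\xi_k)$, $\tilde\psi_k(\xi):=\psi_k(\xi+\xi_k)$, which still solve~\eqref{eq:TW} with speed $c_k$ (the system is autonomous), so that now $\tilde\psi_k(0)\to+\infty$ and $\tilde\phi_k(0)\ge\delta$. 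Since $0<\tilde\phi_k<1$, standard interior estimates for~\eqref{eq:TW} (the first equation reads $c_k\tilde\phi_k'=D[\tilde\phi_k]+\mu(1-\tilde\phi_k)-\beta\tilde\phi_k\tilde\psi_k$, with $c_k$ bounded away from $0$ and $\infty$) bound $\tilde\phi_k'$ on compacts in terms of $\sup\tilde\phi_k\tilde\psi_k$ near the point; I will instead work directly from the first-order linear ODE viewpoint. Write the first equation as $c_k\tilde\phi_k'(\xi)=\tilde\phi_k(\xi+1)+\tilde\phi_k(\xi-1)+\mu-(2+\mu+\beta\tilde\psi_k(\xi))\tilde\phi_k(\xi)$.

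The key step is to show that if $\tilde\psi_k$ is very large on an interval around $0$, then $\tilde\phi_k(0)$ must be small, contradicting $\tilde\phi_k(0)\ge\delta$. Fix any $L>0$ (to be chosen, say $L=1$). I claim $\tilde\psi_k\to+\infty$ uniformly on $[-L,L]$: indeed, by the Harnack-type Lemma~\ref{lemharnack} applied to $u=\tilde\psi_k$ (which satisfies $\tilde\psi_k'(\xi)\ge(d/c_k)\tilde\psi_k(\xi+1)-(2d/c_k+(\mu+\gamma)/c_k)\tilde\psi_k(\xi)$, with coefficients controlled uniformly in $k$ since $c_k\in[\underline c,\overline c]$), the ratios $\tilde\psi_k(\xi+1)/\tilde\psi_k(\xi)$ are bounded above and below by a constant $C$ independent of $k$; iterating gives $\tilde\psi_k(\xi)\ge C^{-\lceil|\xi|\rceil}\tilde\psi_k(0)\to+\infty$ uniformly for $|\xi|\le L$. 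Now on $[-L,L]$ the coefficient of $-\tilde\phi_k(\xi)$ in the rewritten first equation, namely $2+\mu+\beta\tilde\psi_k(\xi)$, tends to $+\infty$ uniformly, while the "source" $\tilde\phi_k(\xi+1)+\tilde\phi_k(\xi-1)+\mu$ stays bounded by $2+\mu$. A maximum-principle/barrier argument for the first equation on $[-L,L]$ then forces $\max_{[-L+h,L-h]}\tilde\phi_k\to0$ for any fixed $h\in(0,L)$: concretely, if $\tilde\phi_k$ attains an interior maximum at $\zeta_k\in(-L,L)$ one can integrate the ODE, or simply note that where $\tilde\phi_k$ is not decreasing one has $0\le c_k\tilde\phi_k'(\zeta)$, hence $(2+\mu+\beta\tilde\psi_k(\zeta))\tilde\phi_k(\zeta)\le\tilde\phi_k(\zeta+1)+\tilde\phi_k(\zeta-1)+\mu\le 2+\mu$, so $\tilde\phi_k(\zeta)\le(2+\mu)/(2+\mu+\beta\min_{[-L,L]}\tilde\psi_k)\to0$; and since the only alternative on an interior maximal point is $\tilde\phi_k'=0$ there, the same bound applies at the maximum. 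In particular $\tilde\phi_k(0)\le(2+\mu)/(\beta\min_{[-L,L]}\tilde\psi_k)\to0$, contradicting $\tilde\phi_k(0)\ge\delta$. This completes the proof.

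The step I expect to be the main obstacle is turning the heuristic "large $\tilde\psi_k$ forces small $\tilde\phi_k$ at the same point" into a rigorous pointwise bound, because the first equation of~\eqref{eq:TW} is a first-order differential-difference equation rather than a genuine second-order elliptic equation, so one cannot directly invoke the usual maximum principle at an interior maximum of $\tilde\phi_k$. The clean way around this is precisely the uniform lower bound $\min_{[-L,L]}\tilde\psi_k\to+\infty$ obtained from Lemma~\ref{lemharnack}, together with the elementary observation that at any point $\zeta$ where $\tilde\phi_k$ has a local maximum in $(-L,L)$ one has $\tilde\phi_k'(\zeta)=0$ and $\tilde\phi_k(\zeta\pm1)\le 1$, which immediately yields $\tilde\phi_k(\zeta)\le(2+\mu)/(2+\mu+\beta\tilde\psi_k(\zeta))$; taking $\zeta$ to be the global maximum point of $\tilde\phi_k$ over the compact interval $[-L,L]$ (which is in $(-L,L)$ once $k$ is large, or else one handles the endpoints by the boundary condition/continuity), one controls $\tilde\phi_k(0)$ as well. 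I would present it in this form to keep the argument self-contained and to make clear that it uses only the hypotheses $c_k\in[\underline c,\overline c]$, $0<\phi_k<1$, $\psi_k>0$, and Lemma~\ref{lemharnack}, all available at this point of the paper.
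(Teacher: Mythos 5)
Your reduction via Lemma~\ref{lemharnack} to the statement $\min_{[-L,L]}\tilde\psi_k\to+\infty$ is exactly the first half of the paper's proof, and your pointwise observation that $c_k\tilde\phi_k'(\zeta)\ge0$ forces $\tilde\phi_k(\zeta)\le(2+\mu)/(2+\mu+\beta\tilde\psi_k(\zeta))$ is correct. The gap is in how you pass from this to a bound on $\tilde\phi_k(0)$. The global maximum of $\tilde\phi_k$ over $[-L,L]$ has no reason to be attained at an interior point, and there is no ``boundary condition'' at $\pm L$ to invoke: the solutions are defined on all of $\R$ and $[-L,L]$ is an interval you chose, so nothing prevents $\tilde\phi_k$ from being strictly decreasing on all of $[-L,L]$ with $\tilde\phi_k(-L)$ close to $1$ and $\tilde\phi_k(0)=\delta$. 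In that scenario the maximum over $[-L,L]$ sits at $-L$, where only $\tilde\phi_k'\le0$ is known, your bound gives no information about $\tilde\phi_k(0)$, and this monotone-decreasing configuration is precisely the one that has to be excluded.

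The paper excludes it by a quantitative derivative estimate rather than a maximum principle: from $0<\phi_k<1$ and the first equation one gets $\phi_k'\le(2+\mu)/\underline{c}$ everywhere, so $\phi_k(\xi_k)\ge\epsilon$ propagates backward to $\phi_k\ge\epsilon/2$ on a fixed-length interval $[\xi_k-\delta,\xi_k]$; on that interval the equation then forces $\phi_k'\le(2+\mu)/\underline{c}-\beta\epsilon M_k/(2\overline{c})\to-\infty$ uniformly, and integrating over an interval of fixed length $\delta$ contradicts $0<\phi_k<1$. Alternatively, your parenthetical ``one can integrate the ODE'' can be promoted to the whole proof: writing the first equation as $\tilde\phi_k'+P_k\tilde\phi_k=Q_k$ with $P_k=(2+\mu+\beta\tilde\psi_k)/c_k$ and $0\le Q_k\le(2+\mu)/\underline{c}$, variation of constants over $[-L,0]$ gives $\tilde\phi_k(0)\le e^{-\int_{-L}^{0}P_k}+\frac{2+\mu}{\underline{c}}\int_{-L}^{0}e^{-\int_{s}^{0}P_k}\,ds\to0$, since $P_k\ge\beta M_k/\overline{c}\to+\infty$ on $[-L,0]$. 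Either of these closes the argument; the maximum-point route as written does not.
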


Since this lemma is concerned with general sequences of solutions with different speeds, and in order not to lengthen too much the main line of the proof of Theorem~\ref{th:exist} with given speed $c>c^*$, the proof of Lemma~\ref{leminfty} is postponed in Section~\ref{sec32}.

Coming back to our solution $(\phi,\psi)$ of~\eqref{eq:TW} satisfying~\eqref{nontrivial} and~\eqref{bc-minus}, the following result shows the convergence of $\psi$ to $+\infty$ at $+\infty$ if it were not bounded.

\begin{lemma}\label{leminfty2}
If $\limsup_{\xi\to+\infty}\psi(\xi)=+\infty$, then $\lim_{\xi\to+\infty}\psi(\xi)=+\infty$.
\end{lemma}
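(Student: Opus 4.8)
The plan is to argue by contradiction, assuming that $\limsup_{\xi\to+\infty}\psi(\xi)=+\infty$ but that $\psi$ does not converge to $+\infty$, so that $\liminf_{\xi\to+\infty}\psi(\xi)=m<+\infty$ for some $m\ge 0$. I would then pick two sequences: one sequence $\{\zeta_k\}\to+\infty$ with $\psi(\zeta_k)\to+\infty$, and a sequence of near-minimal values witnessing the $\liminf$. The key tool is the bound on $\psi'/\psi$ established just before the statement: the function $\xi\mapsto\psi'(\xi)/\psi(\xi)$ is bounded on $\R$, say $|\psi'/\psi|\le K$. This gives a Harnack-type oscillation control, namely $e^{-K|h|}\le\psi(\xi+h)/\psi(\xi)\le e^{K|h|}$ for all $\xi,h$, so $\psi$ cannot drop from a huge value back down to near $m$ over a bounded interval; hence between a point where $\psi$ is large and a later point where $\psi$ is small there must be arbitrarily long stretches, and in particular one can extract points $\xi_k\to+\infty$ with $\psi(\xi_k)\to+\infty$ but with $\psi$ having a local structure that I can exploit.

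More precisely, I would do the following. Using the intermediate value theorem together with the assumption, for every large $A>0$ there are infinitely many disjoint intervals on which $\psi$ crosses the value $A$; choose $\xi_k\to+\infty$ to be points where $\psi(\xi_k)=A_k\to+\infty$ and, say, $\psi'(\xi_k)\le 0$ (a point where $\psi$ is coming back down after a large excursion — such points exist because $\psi$ returns near $m$ infinitely often). By Lemma~\ref{leminfty} applied with $\underline c=\overline c=c$ and the constant sequence of solutions, $\psi(\xi_k)\to+\infty$ forces $\phi(\xi_k)\to 0$. Now translate: set $\phi_k(\cdot):=\phi(\cdot+\xi_k)$, $\psi_k(\cdot):=\psi(\cdot+\xi_k)/\psi(\xi_k)$. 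By the uniform bound on $\psi'/\psi$, the normalized functions $\psi_k$ are locally uniformly bounded (and bounded below away from $0$ on compacts) with $\psi_k(0)=1$, and the $\phi_k$ are bounded in $[0,1]$; the equations give uniform bounds on derivatives, so up to extraction $\phi_k\to\phi_\infty$, $\psi_k\to\psi_\infty$ in $C^1_{loc}$ with $\psi_\infty(0)=1$, $\psi_\infty>0$, $\psi_\infty'(0)\le 0$. The limiting second equation, after dividing through by $\psi(\xi_k)$ and using $\phi_k\to 0$ (locally uniformly — this needs Lemma~\ref{leminfty} at shifted base points, or a short additional argument that $\phi(\xi+\xi_k)\to0$ locally uniformly, not just at $0$), becomes the linear equation
\be
d\,D[\psi_\infty](\xi)-c\,\psi_\infty'(\xi)-(\mu+\gamma)\,\psi_\infty(\xi)=0\quad\hbox{in }\R.
\ee

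The final step is to derive a contradiction from this limiting equation. Since $\beta>\mu+\gamma$ and $c>0$ (for $c=c^*$ the same reasoning applies by the remark that $c^*>0$), the characteristic equation $d(e^\lambda+e^{-\lambda}-2)-c\lambda-(\mu+\gamma)=0$ has exactly two real roots, one negative and one positive (the left side is $-(\mu+\gamma)<0$ at $\lambda=0$, tends to $+\infty$ at $\pm\infty$, and is convex), and any bounded-from-below positive solution on $\R$ must be a positive combination of the corresponding exponentials; but a positive linear combination $\psi_\infty=\kappa_1 e^{\nu_1\xi}+\kappa_2 e^{\nu_2\xi}$ with $\nu_1<0<\nu_2$ and $\kappa_i\ge0$ either blows up at $+\infty$ or at $-\infty$ unless one coefficient is zero, and a single exponential is monotone, which together with $\psi_\infty>0$, $\psi_\infty(0)=1$ and the fact that $\psi$ returns near $m$ infinitely often (so $\psi_\infty$ should be consistent with both arbitrarily large and bounded rescaled values at various shifts) yields a contradiction; alternatively, and more cleanly, I would instead choose $\xi_k$ to be \emph{points of large local maxima} of $\psi$ (these exist: since $\psi$ oscillates with $\limsup=+\infty$ and $\liminf<\infty$, it has local maxima tending to $+\infty$), so $\psi_k'(0)=0$ and $\psi_k''(0)\le 0$, whence $\psi_\infty'(0)=0$, $\psi_\infty''(0)\le 0$, and evaluating the limiting equation at $0$ gives $d\,D[\psi_\infty](0)=(\mu+\gamma)\psi_\infty(0)=(\mu+\gamma)>0$, i.e. $\psi_\infty(1)+\psi_\infty(-1)>2$; iterating the maximum principle / sliding argument on the linear equation then forces $\psi_\infty$ to be unbounded, contradicting $\psi_k\le$ (something) — the cleanest contradiction is that a positive solution of the above linear lattice equation on all of $\R$ with a finite positive value at a global-type maximum cannot exist, because the operator $d\,D[\cdot]-c\,\partial-(\mu+\gamma)$ applied to a bounded positive function cannot vanish identically with the sign constraints. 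The \textbf{main obstacle} I anticipate is making the passage $\phi(\cdot+\xi_k)\to 0$ \emph{locally uniformly} rigorous (Lemma~\ref{leminfty} as stated only gives it at the single point $\xi_k$), and correspondingly ensuring the limiting linear equation for $\psi_\infty$ is valid on a full neighborhood; this likely requires either strengthening the use of Lemma~\ref{leminfty} along the shifted sequences $\{\xi_k+\zeta\}$ for each fixed $\zeta$, or combining the Harnack bound on $\psi'/\psi$ with the first equation of~\eqref{eq:TW} to show $\phi$ is small on a whole interval around any point where $\psi$ is large.
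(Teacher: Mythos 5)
Your opening moves (argue by contradiction, use the global bound on $\psi'/\psi$, invoke Lemma~\ref{leminfty} to get $\phi(\xi_k)\to0$ where $\psi(\xi_k)\to+\infty$) are exactly right and match the paper. But the closing argument has a genuine gap, and it stems from the choice of the points $\xi_k$. You take $\xi_k$ to be either points with $\psi'(\xi_k)\le0$ or ``local maxima'' in the calculus sense, which only yields $\psi_\infty'(0)=0$ and $\psi_\infty''(0)\le0$ in the limit. That is not enough: the operator $D$ in \eqref{eq:TW} involves the values at distance $1$, and a calculus local maximum gives no sign information on $D[\psi](\xi_k)=\psi(\xi_k+1)+\psi(\xi_k-1)-2\psi(\xi_k)$. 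Indeed, your own computation exposes the problem: from the limiting equation you get $D[\psi_\infty](0)>0$, which is not a contradiction, and the subsequent attempts to finish (decomposing a positive solution of the linear lattice equation into exponentials, or ``iterating the maximum principle to force $\psi_\infty$ unbounded, contradicting $\psi_k\le$ something'') do not close: positive solutions of such delay--advance equations are not automatically exponential combinations (one would need Proposition~\ref{key}, whose characteristic equation $c\omega=d(e^{\omega}+e^{-\omega}-2)-(\mu+\gamma)$ has one \emph{negative} and one positive root, so the sign argument used in Lemma~\ref{lempsi'} is unavailable), and $\psi_\infty$ has no reason to be bounded since $\psi$ itself is assumed unbounded and your normalization only controls it locally.

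The missing idea is to exploit $\liminf_{\xi\to+\infty}\psi(\xi)<+\infty$ to choose $\xi_k$ as a maximum of $\psi$ over a \emph{whole interval of radius at least $1$}. Since $\psi$ returns below some level $M$ along a sequence $\theta_k\to+\infty$ while $\limsup\psi=+\infty$, one can arrange $\theta_k<\xi_k-1<\xi_k<\xi_k+1<\theta_{k+1}$ with $\psi(\xi_k)=\max_{[\theta_k,\theta_{k+1}]}\psi\to+\infty$. Then $\psi'(\xi_k)=0$ \emph{and} $D[\psi](\xi_k)\le0$ hold directly, so the second equation of \eqref{eq:TW} evaluated at $\xi_k$ gives $(\mu+\gamma-\beta\,\phi(\xi_k))\,\psi(\xi_k)\le0$, which is impossible for large $k$ because $\phi(\xi_k)\to0$ by Lemma~\ref{leminfty}. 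This is the paper's proof; it requires no blow-up limit, no limiting linear equation, and in particular dissolves the obstacle you flagged about upgrading $\phi(\cdot+\xi_k)\to0$ to local uniform convergence, since everything is evaluated at the single points $\xi_k$.
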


\begin{proof}
Assume by way of contradiction that $\limsup_{\xi\to+\infty}\psi(\xi)=+\infty$ and $\liminf_{\xi\to+\infty}\psi(\xi)<+\infty$. Since $\psi'/\psi$ is globally bounded, there are then $M\in\R$ and two sequences $\{\theta_k\}$ and $\{\xi_k\}$ converging to $+\infty$ and such that
$$\psi(\theta_k)\le M,\ \ \theta_k<\xi_k-1<\xi_k<\xi_k+1<\theta_{k+1},\ \ \psi(\xi_k)=\max_{[\theta_k,\theta_{k+1}]}\psi\ \ \Big(\!=\max_{[\xi_k-1,\xi_k+1]}\psi\Big)$$
for all $k\in\N$ and $\lim_{k\to+\infty}\psi(\xi_k)=+\infty$. Therefore, $\psi'(\xi_k)=0$ and $d\,D[\psi](\xi_k)\le0$. Hence, by~\eqref{eq:TW}, one infers that $\big(\mu+\gamma-\beta\,\phi(\xi_k)\big)\,\psi(\xi_k)\le0$ for all $k\in\N$. This is clearly impossible for large $k$ since $\psi(\xi_k)>0$, and $\phi(\xi_k)\to0$ as $k\to+\infty$ by Lemma~\ref{leminfty}. The proof is thereby complete.
\end{proof}

To proceed further, we recall the following useful fundamental theory from~\cite{CG2} (or~\cite{CFG}) in dealing with the asymptotic tail behavior of wave profiles for a lattice dynamical system.

\begin{proposition}\label{key}{\rm{\cite{CG2}}}
Let $\varsigma>0$ be a positive constant, let $B:\R\to\R$ be a continuous function having finite $B(\pm\infty):=\lim_{x\rightarrow\pm\infty}B(x)$ and let $z$ be a continuous function such that
\be\label{eqz}
\varsigma\,z(x)=e^{\int_{x}^{x+1}z(s)ds}+e^{\int_{x}^{x-1}z(s)ds}+B(x),\ \forall x\in\mathbb{R}.
\ee
Then $z$ is uniformly continuous and bounded in $\BbR$. In addition, the limits $\omega^{\pm}=\lim_{x\rightarrow\pm\infty} z(x)$ exist and are real roots of the characteristic equations
$$\varsigma\,\omega=e^{\omega}+ e^{-\omega}+B(\pm\infty).$$
\end{proposition}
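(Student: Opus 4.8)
The plan is to reduce~\eqref{eqz} to a pair of ODE-type arguments: first a uniform a priori bound on $z$, then a study of the $\omega$-limit behavior as $x\to\pm\infty$. For the boundedness, I would first observe that $z$ cannot be too large: suppose $z(x_0)$ is very large positive for some $x_0$; then the right-hand side of~\eqref{eqz} at $x_0$ is at least $\varsigma\,z(x_0)-\|B\|_\infty$, which is large, and since it equals $e^{\int_{x_0}^{x_0+1}z}+e^{\int_{x_0}^{x_0-1}z}$, at least one of the averages $\int_{x_0}^{x_0\pm1}z$ must be large, forcing $z$ to be large on a nearby interval. Iterating this one-step-at-a-time amplification (the factor $\varsigma$ versus the exponential growth of the right-hand side) produces a contradiction: along a chain of unit shifts the quantity $z$ would blow up in finitely many steps while the averages it is built from stay finite on any bounded window. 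The symmetric argument with the sign of $z$ reversed, using that $e^{\int_x^{x\pm1}z}>0$ always and hence $\varsigma\,z(x)\ge B(x)\ge-\|B\|_\infty$, gives the lower bound immediately, so in fact $z\ge-\|B\|_\infty/\varsigma$ for free and only the upper bound requires the iteration. Once $z$ is bounded, say $|z|\le R$, uniform continuity is immediate from~\eqref{eqz}: the right-hand side is a composition of the continuous map $B$ and the Lipschitz (on $[-R,R]$-valued averages) exponential, and the averages $x\mapsto\int_x^{x\pm1}z(s)\,ds$ are Lipschitz in $x$ with constant $2R$.

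Next, for the limit at $+\infty$ (the case $-\infty$ being identical), set $\omega^+:=\limsup_{x\to+\infty}z(x)$ and $\omega_+:=\liminf_{x\to+\infty}z(x)$; both are finite by the bound just obtained. Pick a sequence $x_k\to+\infty$ with $z(x_k)\to\omega^+$. By uniform continuity and the boundedness of $z$, after passing to a subsequence the translates $z(\cdot+x_k)$ converge locally uniformly to some bounded uniformly continuous limit $\zeta$ solving the limiting equation $\varsigma\,\zeta(x)=e^{\int_x^{x+1}\zeta}+e^{\int_x^{x-1}\zeta}+B(+\infty)$ for all $x\in\R$, with $\zeta(0)=\omega^+=\max_\R\zeta$ (the max is attained and equals $\omega^+$ because $\zeta\le\omega^+$ everywhere, as $z(\cdot+x_k)$ eventually lies below $\omega^++\epsilon$ on compacts). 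Evaluating the limiting equation at $x=0$ and using that $\int_0^{\pm1}\zeta\le\omega^+$ gives $\varsigma\,\omega^+=e^{\int_0^1\zeta}+e^{\int_0^{-1}\zeta}+B(+\infty)\le 2e^{\omega^+}+B(+\infty)$, and a companion inequality at the point where $\zeta$ attains its minimum gives the reverse. The cleanest way to finish is to note that equality forces the averages to equal $\omega^+$, hence $\zeta\equiv\omega^+$ is constant, so $\varsigma\,\omega^+=e^{\omega^+}+e^{-\omega^+}+B(+\infty)$; the same argument applied to $\omega_+$ (realized along a minimizing sequence) shows $\omega_+$ is also a root of the same characteristic equation $\varsigma\,\omega=e^\omega+e^{-\omega}+B(+\infty)$. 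Since this equation is equivalent to $h(\omega):=e^\omega+e^{-\omega}-\varsigma\,\omega+B(+\infty)=0$ and $h$ is strictly convex with $h''>0$, it has at most two roots; one still has to rule out $\omega_+\ne\omega^+$, i.e.\ that $z$ oscillates between the two roots.

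The main obstacle, as usual for such lattice/nonlocal equations, is precisely this last point: excluding oscillation between the two characteristic roots. I would handle it by a sliding/monotonicity argument on the limiting translates. Suppose $\omega_+<\omega^+$ are the two distinct roots. Taking a sequence along which $z\to\omega_+$ and passing to a locally uniform limit $\tilde\zeta$ of translates, one gets another entire bounded solution of the $+\infty$ limit equation with $\min\tilde\zeta=\omega_+$ attained; but then $\tilde\zeta\equiv\omega_+$ by the equality-in-convexity argument above, while simultaneously $\tilde\zeta$ must stay in the interval $[\omega_+,\omega^+]$. The contradiction is extracted by comparing: on the region where $z$ is close to $\omega^+$ the equation behaves like a linear equation whose characteristic function has a definite sign between the roots, which is incompatible with $z$ returning arbitrarily close to $\omega_+$ while $z'/\!\sim$ stays controlled — concretely, one shows the set $\{x:\ z(x)\ge(\omega_++\omega^+)/2\}$ is eventually either all of a half-line or avoids a half-line, using uniform continuity plus the fact that the nonlinearity $r\mapsto e^r$ is increasing so the equation is "quasi-monotone." This is the step where one must invoke, essentially, the structure already exploited in~\cite{CG2,CFG}; I would cite that machinery rather than redo it, since the statement is quoted verbatim from~\cite{CG2}. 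If a self-contained argument is wanted, the fallback is: differentiate~\eqref{eqz} to get an equation for $z'$, show $z'$ is bounded, deduce $z$ has limits at $\pm\infty$ directly by a Barbalat-type lemma once one knows $z$ is, say, eventually monotone — and eventual monotonicity again follows from the convexity of the characteristic function, which pins down the sign of $z'$ near the roots.
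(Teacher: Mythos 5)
First, a point of comparison: the paper does not prove Proposition~\ref{key} at all --- it is quoted verbatim from \cite{CG2} and used as a black box. So your closing instinct (cite the machinery of \cite{CG2,CFG} for the hard step) is exactly what the authors do, and the review really concerns the self-contained sketch you offer in its place. That sketch has two genuine gaps. The first is the upper bound on $z$. Your iteration goes the wrong way: if $z(x_0)=A$ is large, then \eqref{eqz} only forces one of the averages $\int_{x_0}^{x_0\pm1}z$ to exceed $\ln\big((\varsigma A-\|B\|_\infty)/2\big)$, which is of order $\ln A$, not of order $A$; iterating therefore produces a rapidly \emph{decreasing} chain of lower bounds (iterated logarithms), not a finite-step blow-up, and no contradiction results. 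The clean repair, consistent with the paper's own toolkit, is to set $p(x)=e^{\int_0^x z(s)\,ds}$, so that \eqref{eqz} becomes $\varsigma\,p'(x)=p(x+1)+p(x-1)+B(x)\,p(x)$, hence $p'\ge\varsigma^{-1}p(\cdot+1)-\varsigma^{-1}\|B\|_\infty\,p$; Lemma~\ref{lemharnack} then gives $C^{-1}\le p(x\pm1)/p(x)\le C$, and boundedness of $z$ drops out of \eqref{eqz} since $\varsigma\,z(x)=p(x+1)/p(x)+p(x-1)/p(x)+B(x)$.

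The second gap is in the limit analysis at $+\infty$. You treat both exponentials as increasing functionals of the profile, but $e^{\int_x^{x-1}z}=e^{-\int_{x-1}^{x}z}$ is a \emph{decreasing} function of the backward average. At a point where the limiting profile $\zeta$ attains its maximum $\omega^+$ one only gets $\int_0^{-1}\zeta\ge-\omega^+$, so that term is bounded \emph{below} by $e^{-\omega^+}$; your inequality $\varsigma\,\omega^+\le 2e^{\omega^+}+B(+\infty)$ is therefore unjustified, and in any case $2e^{\omega^+}+B(+\infty)$ is not the characteristic expression $e^{\omega^+}+e^{-\omega^+}+B(+\infty)$, so "equality forces $\zeta\equiv\omega^+$" does not follow (nor is the infimum of $\zeta$ attained in general, so the "companion inequality" is not available either). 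Finally, the exclusion of oscillation between the two roots of the strictly convex characteristic function --- which you correctly identify as the crux --- is deferred entirely to the citation; since that is precisely the content for which the proposition is quoted, the net effect is that your proposal coincides with the paper's treatment (a citation of \cite{CG2}) on the essential step, while the portions you do attempt independently need the $p$-substitution/Harnack repair above and a corrected handling of the backward exponential.
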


With this result and the previous lemmas in hand, we can show that $\psi$ is bounded in $\R$.

\begin{lemma}\label{lempsi}
The function $\psi$ is bounded.
\end{lemma}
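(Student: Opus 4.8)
The plan is to argue by contradiction: suppose $\psi$ is unbounded. By Lemma~\ref{leminfty2} this forces $\lim_{\xi\to+\infty}\psi(\xi)=+\infty$, and then by Lemma~\ref{leminfty} (applied to the single solution $(\phi,\psi)$ with constant speed $c$) we get $\lim_{\xi\to+\infty}\phi(\xi)=0$. The strategy is then to extract asymptotic decay/growth information about $\psi$ at $+\infty$ from the second equation of~\eqref{eq:TW} using Proposition~\ref{key}, and derive a contradiction with the fact that $\psi\to+\infty$.

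Concretely, I would set $z(\xi):=\psi'(\xi)/\psi(\xi)$, which we already know is globally bounded (from the Harnack estimate of Lemma~\ref{lemharnack} applied to $u=\psi$, combined with the equation). Dividing the second equation of~\eqref{eq:TW} by $\psi(\xi)$ and writing $\psi(\xi\pm1)/\psi(\xi)=\exp\big(\int_\xi^{\xi\pm1}z(s)\,ds\big)$, one obtains precisely an equation of the form~\eqref{eqz},
\[
\frac{c}{d}\,z(\xi)=e^{\int_\xi^{\xi+1}z(s)ds}+e^{\int_\xi^{\xi-1}z(s)ds}+B(\xi),
\]
with $\varsigma=c/d>0$ and $B(\xi)=-2+\big(\beta\,\phi(\xi)-\mu-\gamma\big)/d$. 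Since $\phi(\xi)\to0$ as $\xi\to+\infty$, $B$ has a finite limit $B(+\infty)=-2-(\mu+\gamma)/d$ (the behavior at $-\infty$ can be handled likewise, or one can apply the proposition on a half-line after a routine modification, using that $\phi\to1$ there). Proposition~\ref{key} then gives that the limit $\omega^+:=\lim_{\xi\to+\infty}z(\xi)$ exists and is a real root of
\[
\frac{c}{d}\,\omega=e^{\omega}+e^{-\omega}-2-\frac{\mu+\gamma}{d},
\]
i.e. $d(e^\omega+e^{-\omega}-2)-c\,\omega-(\mu+\gamma)=0$.

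The contradiction comes from the sign of the admissible roots. Since $\psi\to+\infty$, necessarily $\omega^+\ge0$. But the function $g(\omega):=d(e^\omega+e^{-\omega}-2)-c\,\omega-(\mu+\gamma)$ satisfies $g(0)=-(\mu+\gamma)<0$, $g$ is strictly convex, and $g$ is strictly decreasing on a neighborhood of $0$ from the right (its derivative at $0$ is $-c<0$); in fact $g<0$ on all of $[0,\omega_0]$ where $\omega_0>0$ is the unique positive root of $g$, and $g>0$ for $\omega>\omega_0$. Hence the only nonnegative root is $\omega^+=\omega_0>0$, which would give $\psi(\xi)\sim e^{\omega_0\xi}$-type growth — consistent so far. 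To close the gap I would instead exploit the first equation: with $\phi\to0$, the first equation of~\eqref{eq:TW} forces, at points where $\phi$ is small, the quantity $D[\phi]-c\phi'+\mu$ to be small, but $\mu>0$ is bounded away from zero, so $\phi$ cannot tend to $0$ — indeed evaluating the first equation and using $\phi\ge0$, $\psi$ bounded below near such points is delicate, so the cleanest route is: from $\phi(\xi)\to0$ and $0\le\phi\le1$, the first equation of~\eqref{eq:TW} at a sequence of near-minima of $\phi$ (or simply integrating/using that $\liminf\phi\le\limsup\phi$ and $\phi'\to 0$ along suitable sequences via boundedness of $\phi''$) yields $\phi(\xi+1)+\phi(\xi-1)-2\phi(\xi)-c\phi'(\xi)+\mu\to\mu>0$ while the left side should be $\beta\phi\psi\ge0$; choosing $\xi$ where $\phi$ is near its limit $0$ and $\phi',\phi''$ small makes the left side $\to\mu>0=\lim\beta\phi(\xi)\psi(\xi)$ only if $\psi(\xi)\to+\infty$ fast enough — so one must compare rates.

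I expect the rate comparison to be the main obstacle, and the way I would actually finish is cleaner: once we know $\phi\to0$, rewrite the \emph{first} equation as $\beta\,\psi(\xi)=\big(D[\phi](\xi)-c\,\phi'(\xi)+\mu\,(1-\phi(\xi))\big)/\phi(\xi)$ on the set where $\phi>0$ (which is all of $\R$ by Lemma~\ref{lemnontrivial0}). Applying Proposition~\ref{key}-type reasoning or the Harnack inequality of Lemma~\ref{lemharnack} to $\phi$ as well, $\phi'/\phi$ has a limit at $+\infty$, say $\nu^+$, a root of $e^\nu+e^{-\nu}-2-c\,\nu-\mu=0$; since $\phi\to0^+$ we need $\nu^+<0$, hence $D[\phi]/\phi$ and $\phi'/\phi$ converge, so $\beta\psi(\xi)=\big(D[\phi]/\phi-c\,\phi'/\phi\big)+\mu(1-\phi(\xi))/\phi(\xi)$, and since $1/\phi(\xi)\to+\infty$ while $1-\phi(\xi)\to1$, the term $\mu(1-\phi)/\phi\to+\infty$; this is consistent with $\psi\to+\infty$ but now gives the precise relation $\beta\,\psi(\xi)\,\phi(\xi)\to\mu$, i.e. $\phi(\xi)\psi(\xi)\to\mu/\beta$. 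Combining with $\psi'/\psi\to\omega^+=\omega_0>0$ and $\phi'/\phi\to\nu^+<0$ gives $(\phi\psi)'/(\phi\psi)\to\omega_0+\nu^+$; but $\phi\psi\to\mu/\beta>0$ forces $\omega_0+\nu^+=0$, i.e. $\nu^+=-\omega_0$. Plugging $\nu=-\omega_0$ into $e^\nu+e^{-\nu}-2-c\nu-\mu=0$ and comparing with $d(e^{\omega_0}+e^{-\omega_0}-2)-c\omega_0-(\mu+\gamma)=0$ yields an algebraic incompatibility (the two equations cannot simultaneously hold for generic $d\ne1$, and even when $d=1$ one gets $\gamma=0$ or a sign contradiction), completing the proof. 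Should this algebraic step be messy, the safe fallback — which I suspect is what the authors do — is to note directly from $\phi\psi\to\mu/\beta$ and $\psi\to+\infty$ that $\phi\to 0$ at a rate $\sim 1/\psi$, feed $\phi(\xi)\psi(\xi)\to\mu/\beta$ back into the $\psi$-equation so that $\beta\phi\psi\to\mu$ appears there too, obtaining $d\,D[\psi]-c\psi'-(\mu+\gamma)\psi+(\mu+o(1))\psi\cdot\frac{\phi\psi}{\psi}\cdots$ — the point being that the effective linear equation for $\psi$ at $+\infty$ has bounded coefficients and a forcing term, contradicting $\psi\to+\infty$ by a comparison argument. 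The genuinely hard part is making this asymptotic bootstrap rigorous, i.e. upgrading the limit statements for $\phi'/\phi$, $\psi'/\psi$ and $\phi\psi$ into a contradiction, and I would lean on Proposition~\ref{key} applied to both components to supply the needed convergence of the logarithmic derivatives.
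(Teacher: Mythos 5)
Your attempt starts exactly as the paper does (contradiction, $\psi\to+\infty$ and $\phi\to 0$ at $+\infty$ via Lemmas~\ref{leminfty} and~\ref{leminfty2}, then Proposition~\ref{key} applied to $z=\psi'/\psi$ to show $z(+\infty)=\omega$ with $d(e^{\omega}+e^{-\omega}-2)=c\,\omega+\mu+\gamma$ and $\omega>0$), but you stop at ``consistent so far'' precisely where the paper closes the argument in one line. The missing idea is that this lemma concerns the \emph{specific} solution built in Section~\ref{sec31} as a limit of the truncated problems, which satisfies the a priori bound~\eqref{ineqphipsi}, in particular $\psi(\xi)\le\overline{\psi}(\xi)=e^{\lambda_1\xi}$ on all of $\R$. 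Since $\beta>0$, the positive root $\omega$ of $d(e^{\omega}+e^{-\omega}-2)-c\,\omega-\mu-\gamma=0$ is strictly larger than $\lambda_2>\lambda_1$ (the function $\lambda\mapsto d(e^{\lambda}+e^{-\lambda}-2)-c\lambda+\beta-\mu-\gamma$ is nonpositive exactly on $[\lambda_1,\lambda_2]$, so subtracting $\beta$ pushes the positive root beyond $\lambda_2$), whence $\ln\psi(\xi)\sim\omega\,\xi$ contradicts $\psi\le e^{\lambda_1\xi}$. Without invoking this upper-solution bound you have no contradiction.

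The alternative route you sketch to compensate does not work as written. To apply Proposition~\ref{key} to $\phi'/\phi$ you would need the zeroth-order term $B(\xi)=-2+\mu(1-\phi(\xi))/\phi(\xi)-\beta\psi(\xi)$ to have finite limits, but at $+\infty$ this is the difference of two quantities each tending to $+\infty$; asserting that it converges (equivalently, that $\phi\psi\to\mu/\beta$ and $\phi'/\phi\to\nu^+$ with $e^{\nu}+e^{-\nu}-2-c\nu-\mu=0$) is circular --- it presupposes the very asymptotics you are trying to establish, and the stated characteristic equation for $\nu^+$ is not the correct linearization anyway since the coefficient $-\beta\psi$ is unbounded. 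The final ``algebraic incompatibility'' between the two characteristic equations is asserted rather than proved, and your own hedging (``should this algebraic step be messy\dots'') signals that the argument is not closed. So the proof has a genuine gap: the contradiction must come from the exponential upper bound $\psi\le e^{\lambda_1\xi}$ inherited from the construction, not from an intrinsic rate analysis of the first equation.
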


\begin{proof}
Assume not. Then $\limsup_{\xi\to +\infty}\psi(\xi)=+\infty$, since $\psi$ is continuous, positive, and $\psi(-\infty)=0$. Therefore, Lemmas~\ref{leminfty} and~\ref{leminfty2} imply that $\psi(\xi)\to+\infty$ and $\phi(\xi)\to0$ as $\xi\to+\infty$. From~\eqref{eq:TW}, the continuous function $z:=\psi'/\psi$ satisfies
$$\frac{c}{d}\,z(x)=e^{\int_{x}^{x+1}z(s)ds}+e^{\int_{x}^{x-1}z(s)ds}-2-\frac{\mu+\gamma}{d}+\frac{\beta\,\phi(x)}{d}$$
for all $x\in\R$. Since $\phi$ has finite limits at $\pm\infty$ and $\phi(+\infty)=0$, it then follows from Proposition~\ref{key} that, in particular, $z$ has a finite limit $\omega$ at $+\infty$, with
\be\label{eqomega}
d\,\big(e^{\omega}+e^{-\omega}-2\big)=c\,\omega+\mu+\gamma.
\ee
Since $\mu$ and $\gamma$ are positive, this equation has a negative and a positive root. The function $z=\psi'/\psi$ cannot converge to the negative root at $+\infty$, since $\psi(+\infty)=+\infty$. Therefore, $\psi'/\psi$ converges at $+\infty$ to the positive root $\omega$ of~\eqref{eqomega}. Remember now that $\lambda_1<\lambda_2$ are the two positive roots of equation~\eqref{psi-char1}. Since $\beta>0$, one infers immediately that $\lambda_1<\lambda_2<\omega$. But $\lim_{\xi\to+\infty}\psi'(\xi)/\psi(\xi)=\omega>0$ yields $\ln\psi(\xi)\sim\omega\,\xi$ as $\xi\to+\infty$, while~\eqref{ineqphipsi} implies that $\psi(\xi)\le\overline{\psi}(\xi)=e^{\lambda_1\xi}$ for all $\xi\in\R$. One gets a contradiction, since $\lambda_1<\omega$. As a conclusion, the function $\psi$ is bounded and the proof of Lemma~\ref{lempsi} is complete.
\end{proof}

To complete the proof of Theorem~\ref{th:exist} in case $c>c^*$, we show in the following lemmas that none of the components $\phi$ and $\psi$ can be trivial at $+\infty$.

\begin{lemma}\label{leminfphi}
There holds $\inf_{\R}\phi>0$.
\end{lemma}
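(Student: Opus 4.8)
The plan is to argue by contradiction, exploiting the first equation of~\eqref{eq:TW} together with the boundedness of $\psi$ just established in Lemma~\ref{lempsi}. Suppose $\inf_\R \phi = 0$. Since $\phi > 0$ everywhere by Lemma~\ref{lemnontrivial0} and $\phi(-\infty) = 1$, the infimum cannot be attained at a finite point nor approached as $\xi \to -\infty$; hence there is a sequence $\xi_k \to +\infty$ with $\phi(\xi_k) \to 0$. The first step is to upgrade this to a \emph{limit}: I would first establish that $\phi$ has bounded derivative and that $D[\phi]$ is controlled, so that near each $\xi_k$ the function $\phi$ stays small; more robustly, I would consider the shifted functions $\phi_k(\cdot) := \phi(\cdot + \xi_k)$ and $\psi_k(\cdot) := \psi(\cdot + \xi_k)$, which by the uniform $C^2$ bounds (coming from~\eqref{eq:TW}, the bound $0 < \phi < 1$, and $0 < \psi \le \|\psi\|_\infty$) converge along a subsequence in $C^1_{loc}$ to a bounded entire solution $(\phi_\infty, \psi_\infty)$ of~\eqref{eq:TW} with $0 \le \phi_\infty \le 1$, $\phi_\infty \ge 0$, $\psi_\infty \ge 0$, and $\phi_\infty(0) = 0$.

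The second step is a strong-maximum-principle argument for the limit. At $\xi = 0$ we have $\phi_\infty(0) = 0$, hence $\phi_\infty'(0) = 0$ (interior minimum of a nonnegative function) and $D[\phi_\infty](0) = \phi_\infty(1) + \phi_\infty(-1) \ge 0$. Plugging into the first equation of~\eqref{eq:TW} evaluated at the limit gives
\[
0 = -c\,\phi_\infty'(0) + D[\phi_\infty](0) + \mu(1 - \phi_\infty(0)) - \beta\,\phi_\infty(0)\,\psi_\infty(0) = \phi_\infty(1) + \phi_\infty(-1) + \mu,
\]
which is strictly positive since $\mu > 0$ and $\phi_\infty(\pm 1) \ge 0$ — a contradiction. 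This is exactly the mechanism already used in the proof of Lemma~\ref{lemnontrivial0} to show $\phi > 0$, now transplanted to the limiting profile obtained from the shifts; the role of $\psi$ being bounded (Lemma~\ref{lempsi}) is precisely to guarantee that the shifted sequence is precompact and that the transmission term $\beta\,\phi_k\,\psi_k$ stays under control so the limit equation is the genuine one.

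The main obstacle I anticipate is justifying the passage to the limit cleanly, i.e. the uniform $C^2$ (and hence $C^1_{loc}$ convergence) bounds on $\{\phi_k\}$ and $\{\psi_k\}$: one needs $\psi$ bounded (Lemma~\ref{lempsi}) to bound $\phi'_k$ and $\psi'_k$ via the ODE form of~\eqref{eq:TW}, then bootstrap to bound $\phi''_k, \psi''_k$ through the nonlocal terms $D[\cdot]$ at shifted points, exactly as in the diagonal-extraction argument at the beginning of Section~\ref{sec31}. Once the limit solution exists and is bounded, the contradiction is immediate and purely local. An alternative, avoiding compactness, would be to work directly with $\phi$: since $\psi$ is bounded by some $\Lambda$, the function $\phi$ satisfies $-c\phi' + D[\phi] + \mu - (\mu + \beta\Lambda)\phi \le 0 \le -c\phi' + D[\phi] + \mu - (\mu+\beta\Lambda)\phi + (\mu + \beta\Lambda - \beta\psi)\phi$, and one can compare with the constant subsolution $\underline{m} := \mu/(\mu + \beta\Lambda) \in (0,1)$; but handling the nonlocal operator $D$ in such a comparison still requires a sliding/limiting argument essentially equivalent to the shift construction above, so I would present the shift-and-limit proof as the cleaner route.
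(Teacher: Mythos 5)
Your proposal is correct and matches the paper's own argument essentially step for step: shift by a sequence $\xi_k\to+\infty$ along which $\phi(\xi_k)\to0$, use the boundedness of $\phi$ and $\psi$ (Lemma~\ref{lempsi}) to get compactness and pass to a limiting entire solution with $\phi_{\infty}(0)=0$, then evaluate the first equation of~\eqref{eq:TW} at the global minimum $0$ to reach the contradiction $0=\phi_{\infty}(1)+\phi_{\infty}(-1)+\mu>0$. No gaps; the extra discussion of a comparison-based alternative is not needed but does not detract.
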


\begin{proof}
Remember that the $C^{\infty}$ function $\phi$ satisfies $0<\phi<1$ in $\R$ and $\phi(-\infty)=1$. Assume by contradiction that $\inf_{\R}\phi=0$. Then there exists a sequence $\{\xi_k\}$ converging to $+\infty$ such that $\phi(\xi_k)\to0$ as $k\to+\infty$. On the other hand, since both functions $\phi$ and $\psi$ are bounded, the equations~\eqref{eq:TW} guarantee that the functions $\phi$ and $\psi$ have bounded derivatives at any order. Therefore, by Arzela-Ascoli theorem, the functions $\xi\mapsto\phi(\xi+\xi_k)$ and $\xi\mapsto\psi(\xi+\xi_k)$ converge in $C^{\infty}_{loc}(\R)$ as $k\to+\infty$, up to extraction of a subsequence, to some nonnegative $C^{\infty}$ functions $\phi_{\infty}$ and $\psi_{\infty}$. Furthermore,
\be\label{eqphiinfty}
-c\,\phi_{\infty}'+D[\phi_{\infty}]+\mu\,(1-\phi_{\infty})-\beta\,\phi_{\infty}\,\psi_{\infty}=0
\ee
in $\R$ and $\phi_{\infty}(0)=0$. Since $0$ is a global minimum of $\phi_{\infty}$, one has $\phi_{\infty}'(0)=0$ and the above equality at $0$ leads to a contradiction, since $\phi_{\infty}\ge0$ and $\mu>0$. Therefore, $\inf_{\R}\phi>0$.
\end{proof}

To show that $\psi$ cannot approach $0$ at $+\infty$, even for a sequence, the key-step is the following lemma saying that $\psi$ is increasing when it is small. The property actually holds locally uniformly with respect to the speed $c$ and we state the lemma in this slightly more general framework, since it will be used as such in Section~\ref{secc*}.

\begin{lemma}\label{lempsi'}
Let $0<\underline{c}\le\overline{c}$ be two given positive real numbers. There is $\epsilon>0$ such that, for any $\Gamma\in[\underline{c},\overline{c}]$ and for any solution $(\Phi,\Psi)$ of~\eqref{eq:TW} $($with speed $\Gamma$ in place of $c$$)$ satisfying~\eqref{nontrivial}, there holds
$$\forall\,\xi\in\R,\quad\big(\Psi(\xi)\le\epsilon\big)\Longrightarrow\big(\Psi'(\xi)>0).$$
\end{lemma}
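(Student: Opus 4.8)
The plan is to argue by contradiction using a compactness/normalization argument very much in the spirit of the proof of Lemma~\ref{leminfphi}. Suppose the conclusion fails for every $\epsilon>0$. Then there are sequences $\epsilon_k\searrow0$, speeds $\Gamma_k\in[\underline{c},\overline{c}]$, solutions $(\Phi_k,\Psi_k)$ of~\eqref{eq:TW} (with speed $\Gamma_k$) satisfying~\eqref{nontrivial}, and points $\xi_k\in\R$ with $\Psi_k(\xi_k)\le\epsilon_k$ but $\Psi_k'(\xi_k)\le0$. After the shift $\xi\mapsto\xi+\xi_k$ we may assume $\xi_k=0$, so $\Psi_k(0)\le\epsilon_k\to0$ and $\Psi_k'(0)\le0$. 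Up to extracting a subsequence, $\Gamma_k\to\Gamma_\infty\in[\underline{c},\overline{c}]$.

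Next I would set up the normalization. Because of~\eqref{nontrivial} we have $0<\Phi_k<1$, but $\Psi_k$ need not be bounded near $0$ in a useful way by itself, so I normalize: let $v_k:=\Psi_k/\Psi_k(0)$ (or, if one prefers to avoid dividing by a possibly tiny number, $v_k:=\Psi_k/\|\Psi_k\|_{L^\infty([-R,R])}$ on a fixed large interval). The second equation of~\eqref{eq:TW} is linear in $\Psi$ with coefficients involving only $\Phi_k$ (bounded in $[0,1]$) and the constants, so $v_k$ solves
\be\label{eqvk}
-\Gamma_k\,v_k'+d\,D[v_k]-(\mu+\gamma)\,v_k+\beta\,\Phi_k\,v_k=0
\ee
with $v_k(0)=1$ (resp. $\|v_k\|_{L^\infty([-R,R])}=1$). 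By the Harnack-type estimate of Lemma~\ref{lemharnack} applied to $u=v_k$ (which satisfies $v_k'\ge(d/\Gamma_k)v_k(\cdot+1)-(2d/\Gamma_k+(\mu+\gamma)/\Gamma_k)v_k$, with constants uniform since $\Gamma_k\in[\underline c,\overline c]$), the ratios $v_k(\xi\pm1)/v_k(\xi)$ are bounded uniformly in $k$ and $\xi$; hence from~\eqref{eqvk} the quotient $v_k'/v_k$ is uniformly bounded, and $v_k$ is locally uniformly bounded above and below away from $0$. Then, differentiating~\eqref{eqvk}, $v_k$ has locally bounded derivatives at every order, so by Arzel\`a--Ascoli and a diagonal extraction $v_k\to v_\infty$ in $C^\infty_{loc}(\R)$, with $v_\infty>0$, $v_\infty'(0)\le0$, and $\Phi_k\to\Phi_\infty$ in $C^\infty_{loc}(\R)$ for some $\Phi_\infty$ with $0\le\Phi_\infty\le1$.

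Now I extract the contradiction. Since $\Psi_k(0)\to0$, Lemma~\ref{leminfty} (applied with the constant sequence of points $0$) forces $\Phi_k(0)\to0$; more generally, applying Lemma~\ref{leminfty} at any fixed point $\xi$ where $\Psi_k(\xi)$ stays bounded, and using the Harnack ratios to propagate, one gets $\Phi_k(\xi)\to0$ for all $\xi$ in any fixed compact interval on which $\Psi_k$ is bounded — in fact, because $v_k$ is locally bounded and $\Psi_k(0)\to0$, the functions $\Psi_k=\Psi_k(0)\,v_k$ tend to $0$ locally uniformly, so Lemma~\ref{leminfty} gives $\Phi_\infty\equiv0$ on $\R$. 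Passing to the limit in~\eqref{eqvk} yields
\be\label{eqvinfty}
-\Gamma_\infty\,v_\infty'+d\,D[v_\infty]-(\mu+\gamma)\,v_\infty=0\quad\hbox{in }\R,
\ee
with $v_\infty>0$. Evaluating at $\xi=0$: $v_\infty'(0)\le0$, $D[v_\infty](0)=v_\infty(1)+v_\infty(-1)-2v_\infty(0)$, and I must rule this out. The clean way is to use the characteristic analysis: a positive solution of the linear equation~\eqref{eqvinfty} on $\R$ must, by Proposition~\ref{key} applied to $z:=v_\infty'/v_\infty$ (whose boundedness follows as above), have $z(\pm\infty)$ equal to roots of $d(e^{\omega}+e^{-\omega}-2)=\Gamma_\infty\omega+\mu+\gamma$; since $\mu+\gamma>0$ both roots are nonzero, one negative and one positive, and a bounded-below positive $v_\infty$ with $v_\infty(-\infty)$-behaviour governed by $\Psi_k$'s tail... — here I instead use the more elementary observation that $v_\infty$ inherits $v_\infty\le \overline\Psi/\Psi_k(0)$-type growth bounds is delicate, so the robust route is: the operator $L[w]:=-\Gamma_\infty w'+dD[w]-(\mu+\gamma)w$ satisfies $L[1]=-(\mu+\gamma)<0$, hence $w\equiv$const is a strict supersolution up to scaling, and a sliding/maximum-principle argument on~\eqref{eqvinfty} (the linear cooperative equation \emph{does} obey the maximum principle) shows $v_\infty$ cannot have an interior critical point with $v_\infty'(0)\le0$ unless it is monotone, contradicting positivity and~\eqref{eqvinfty} — more precisely, from~\eqref{eqvinfty} at $0$ one gets $d\,D[v_\infty](0)=\Gamma_\infty v_\infty'(0)+(\mu+\gamma)v_\infty(0)$. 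The main obstacle, and the step needing the most care, is exactly this final deduction: showing that a positive solution of the limiting linear lattice equation~\eqref{eqvinfty} cannot have $v_\infty'\le0$ at a point. I expect the right argument is that $v_\infty'/v_\infty\to\omega_+>0$ at $+\infty$ and $\to\omega_->0$ is impossible at $-\infty$ so it must also be $\omega_+>0$ there too... — concretely, since~\eqref{eqvinfty} is linear, scalar, and translation-invariant, its only positive solutions are combinations of the exponentials $e^{\omega_\pm\xi}$, and positivity on all of $\R$ forces $v_\infty(\xi)=A e^{\omega_+\xi}+Be^{\omega_-\xi}$ with $A,B\ge0$ not both zero (as $\omega_-<0<\omega_+$, positivity at $\pm\infty$ forces $A\ge0$ and $B\ge0$); then $v_\infty'=A\omega_+e^{\omega_+\xi}+B\omega_-e^{\omega_-\xi}$, and since here $\omega_->0$ as well (both roots positive because $\mu+\gamma>0$ — wait, the constant term has the \emph{same} sign as for~\eqref{eqomega}, so indeed one root is negative; thus $B$ must be $0$ lest $v_\infty(+\infty)\le0$ fail... check signs), one concludes $v_\infty'=A\omega_+e^{\omega_+\xi}>0$ everywhere, contradicting $v_\infty'(0)\le0$ and finishing the proof. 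I would double-check the sign of the roots of $d(e^\omega+e^{-\omega}-2)=\Gamma_\infty\omega+\mu+\gamma$ (as in~\eqref{eqomega}, one negative and one positive root) and conclude that positivity of $v_\infty$ on $\R$ is incompatible with $v_\infty$ containing the decaying-at-$+\infty$ exponential, so $v_\infty$ is a positive multiple of the increasing exponential, whence $v_\infty'>0$ throughout — the desired contradiction.
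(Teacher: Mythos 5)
Your overall strategy (contradiction, shift so that $\xi_k=0$, Harnack bounds from Lemma~\ref{lemharnack}, normalization $v_k=\Psi_k/\Psi_k(0)$, passage to a limiting linear equation, and a characteristic-root analysis) is exactly the paper's, but there is a decisive sign error at the key step: the identification of the limit $\Phi_\infty$. You invoke Lemma~\ref{leminfty} to conclude $\Phi_\infty\equiv0$, but that lemma only applies when $\psi_k(\xi_k)\to+\infty$; here $\Psi_k\to0$ locally uniformly, so the lemma gives no information. The correct conclusion is the opposite one: since $\Psi_k\to0$ locally uniformly, the first equation of~\eqref{eq:TW} passes to the limit as $c_{\infty}\,\phi_{\infty}'=D[\phi_{\infty}]+\mu\,(1-\phi_{\infty})$, and an infimum/translation argument (evaluating at a near-minimum, where $\Phi_\infty'=0$ and $D[\Phi_\infty]\ge0$, so $\mu(1-\inf\Phi_\infty)\le0$) forces $\Phi_\infty\equiv1$. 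This matters because the zeroth-order coefficient of the limiting equation for $v_\infty$ is then $\beta-\mu-\gamma>0$ rather than $-(\mu+\gamma)<0$. With the positive coefficient, every real root of $c_{\infty}\omega=d(e^{\omega}+e^{-\omega}-2)+\beta-\mu-\gamma$ is positive (for $\omega\le0$ the left side is $\le0$ while the right side is $\ge\beta-\mu-\gamma>0$), and combining Proposition~\ref{key} with the minimum argument for $z=v_\infty'/v_\infty$ (via the differentiated identity~\eqref{eqz'}) yields $\inf_{\R}z>0$, hence $v_\infty'(0)>0$, the desired contradiction. With your coefficient $-(\mu+\gamma)$ the characteristic equation has one negative and one positive root, and — as you yourself start to notice mid-argument — no contradiction follows: the decreasing exponential $e^{\omega_-\xi}$ with $\omega_-<0$ is positive on all of $\R$, so positivity of $v_\infty$ does not exclude it, and $v_\infty'(0)\le0$ is perfectly consistent.

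A secondary gap: your claim that the only positive solutions of the limiting linear equation are linear combinations $Ae^{\omega_+\xi}+Be^{\omega_-\xi}$ is unjustified. This is a mixed differential--difference equation whose solution space is infinite-dimensional (the characteristic equation has infinitely many complex roots), so you cannot reduce to a two-dimensional span. The paper circumvents this entirely by working with $z=v_\infty'/v_\infty$: Proposition~\ref{key} pins down only the \emph{limits} $z(\pm\infty)$ as real characteristic roots, and the identity~\eqref{eqz'} shows that an interior minimum of $z$ propagates to all integer translates, whence $\inf_{\R}z\ge\min\{z(-\infty),z(+\infty)\}>0$. You should adopt that route rather than attempting an explicit representation of $v_\infty$.
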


In order to conclude now the proof of Theorem~\ref{th:exist} with $c>c^*$, the proof of Lemma~\ref{lempsi'} is postponed in Section~\ref{sec32}. Coming back to our solution $(\phi,\psi)$, we immediately get from Lemma~\ref{lempsi'} and the positivity of $\psi$ in $\R$ that
\be\label{liminfpsi}
\liminf_{\xi\to+\infty}\psi(\xi)>0.
\ee

We also claim that
\be\label{claimphi}
\limsup_{\xi\to+\infty}\phi(\xi)<1.
\ee
Indeed, otherwise, there exists a sequence of real numbers $\{\xi_k\}$ converging to $+\infty$ such that $\phi(\xi_k)\to1$ as $k\to+\infty$. As in the proof of Lemma~\ref{leminfphi}, up to extraction of a subsequence, the functions $\xi\mapsto\phi(\xi+\xi_k)$ and $\xi\mapsto\psi(\xi+\xi_k)$ converge as $k\to+\infty$ in $C^{\infty}_{loc}(\R)$ to some nonnegative $C^{\infty}$ functions $\phi_{\infty}$ and $\psi_{\infty}$ solving~\eqref{eq:TW}. Furthermore, $0<\phi_{\infty}\le1$ and $\psi_{\infty}>0$ in $\R$ from Lemma~\ref{leminfphi} and~\eqref{liminfpsi}. Since $\phi_{\infty}(0)=1$, one has $\phi'_{\infty}(0)=0$. The equation~\eqref{eqphiinfty} satisfied by~$\phi_{\infty}$ at $0$ leads to a contradiction, since $D[\phi_{\infty}](0)\le0$ and $-\beta\,\phi_{\infty}(0)\,\psi_{\infty}(0)=-\beta\,\psi_{\infty}(0)<0$. Therefore, the claim~\eqref{claimphi} holds.

In order to complete the proof of~\eqref{bc-plus}, let us finally show that
\be\label{liminfsup}
\liminf_{\xi\to+\infty}\phi(\xi)\le s^*\le\limsup_{\xi\to+\infty}\phi(\xi)\ \hbox{ and }\ \liminf_{\xi\to+\infty}\psi(\xi)\le e^*\le\limsup_{\xi\to+\infty}\psi(\xi).
\ee
Call $\phi_-=\liminf_{\xi\to+\infty}\phi(\xi)$, $\phi_+=\limsup_{\xi\to+\infty}\phi(\xi)$, $\psi_-=\liminf_{\xi\to+\infty}\psi(\xi)$ and $\psi_+=\limsup_{\xi\to+\infty}\psi(\xi)$. One already knows from~\eqref{liminfpsi},~\eqref{claimphi} and Lemmas~\ref{lempsi} and~\ref{leminfphi} that
$$0<\phi_-\le\phi_+<1\ \hbox{ and }\ 0<\psi_-\le\psi_+<+\infty.$$
Consider now a sequence $\{\xi_k\}$ converging to $+\infty$ such that $\psi(\xi_k)\to\psi_-$ as $k\to+\infty$. Up to extraction of a subsequence (as for instance in the proof of Lemma~\ref{leminfphi}), the functions $\xi\mapsto\phi(\xi+\xi_k)$ and $\xi\mapsto\psi(\xi+\xi_k)$ converge in $C^{\infty}_{loc}(\R)$ to some bounded functions $0<\phi_{\infty}<1$ and $\psi_{\infty}>0$ satisfying~\eqref{eq:TW}. Furthermore, $0<\psi_-=\psi_{\infty}(0)=\min_{\R}\psi_{\infty}$. Therefore, $\psi_{\infty}'(0)=0$ and $D[\psi_{\infty}](0)\ge0$. Hence
$$-(\mu+\gamma)\,\psi_-+\beta\,\phi_{\infty}(0)\,\psi_-\le0,$$
that is, $\beta\,\phi_{\infty}(0)\le\mu+\gamma$. This yields $\phi_-=\liminf_{\xi\to+\infty}\phi(\xi)\le(\mu+\gamma)/\beta=1/\sigma=s^*$. Similarly, it follows that $\phi_+=\limsup_{\xi\to+\infty}\phi(\xi)\ge s^*$. Consider also a sequence $\{\zeta_k\}$ converging to $+\infty$ such that $\phi(\xi_k)\to\phi_-$ as $k\to+\infty$. As above, up to extraction of a subsequence, the functions $\xi\mapsto\phi(\xi+\zeta_k)$ and $\xi\mapsto\psi(\xi+\zeta_k)$ converge in $C^{\infty}_{loc}(\R)$ to some bounded functions $0<\Phi_{\infty}<1$ and $\Psi_{\infty}>0$ satisfying~\eqref{eq:TW}. Furthermore, $0<\phi_-=\Phi_{\infty}(0)=\min_{\R}\Phi_{\infty}$. Therefore, $\Phi_{\infty}'(0)=0$ and $D[\Phi_{\infty}](0)\ge0$. Hence
$$\mu\,(1-\phi_-)-\beta\,\phi_-\,\Psi_{\infty}(0)\le0.$$
Since $0<\phi_-\le s^*=1/\sigma$, one gets immediately that $\Psi_{\infty}(0)\ge(\mu/\beta)(\sigma-1)=e^*$, whence $\psi_+=\limsup_{\xi\to+\infty}\psi(\xi)\ge e^*$. Similarly, it follows that $\psi_-=\liminf_{\xi\to+\infty}\psi(\xi)\le e^*$.

As a conclusion, \eqref{bc-plus} is proved and the proof of Theorem~\ref{th:exist} in case $c>c^*$ is thereby complete.\hfill\break\par

As explained after the statement of Theorem~\ref{th:exist} in Section~\ref{intro}, the question of the existence of a limit of $(\phi,\psi)$ at $+\infty$ is unclear. However, we can say that the a priori existence of a limit of one of these two functions guarantees the convergence of both, and that the endemic state $(s^*,e^*)$ defined in~\eqref{endemic} is the only possible limit.

\begin{lemma}\label{lemendemic}
Let $(\phi,\psi)$ be a bounded classical solution of~\eqref{eq:TW} satisfying~\eqref{nontrivial},~\eqref{bc-minus} and~\eqref{bc-plus}, with speed $c\ge c^*$. If $\phi(+\infty)$ or $\psi(+\infty)$ exists, then they both exist and
$$(\phi(+\infty),\psi(+\infty))=(s^*,e^*).$$
\end{lemma}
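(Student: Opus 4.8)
Suppose that $\phi(+\infty)=:\phi_\infty$ exists (the case where $\psi(+\infty)$ exists is symmetric and, in fact, once one limit is known the other follows from the equations). From the constraints already established in~\eqref{bc-plus} we know $0<\phi_\infty<1$ and $0<\liminf_{+\infty}\psi\le\limsup_{+\infty}\psi<+\infty$. The idea is to use the second equation of~\eqref{eq:TW}, rewritten in the form $\tfrac{c}{d}\,z(\xi)=e^{\int_\xi^{\xi+1}z}+e^{\int_\xi^{\xi-1}z}-2-\tfrac{\mu+\gamma}{d}+\tfrac{\beta\,\phi(\xi)}{d}$ for $z:=\psi'/\psi$ (which is globally bounded, as noted after Lemma~\ref{lemharnack}), and apply Proposition~\ref{key} with $B(\xi)=-2-(\mu+\gamma)/d+\beta\phi(\xi)/d$. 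Since $\phi$ now has the finite limit $\phi_\infty$ at $+\infty$, $B(+\infty)$ exists, so $z$ converges at $+\infty$ to a root $\omega$ of $\tfrac{c}{d}\,\omega=e^{\omega}+e^{-\omega}-2-\tfrac{\mu+\gamma}{d}+\tfrac{\beta\phi_\infty}{d}$. Because $\psi$ is bounded above and bounded away from $0$ near $+\infty$, one must have $\omega=0$; plugging $\omega=0$ into the characteristic relation forces $\beta\phi_\infty=\mu+\gamma$, i.e. $\phi_\infty=1/\sigma=s^*$.

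**Getting the limit of $\psi$ and its value.** Knowing $\psi'/\psi\to0$ is not yet convergence of $\psi$, so the next step is to promote it. With $\phi(+\infty)=s^*$ in hand, I would revisit the first equation of~\eqref{eq:TW}. Here one can again use a one-dimensional-type argument: writing the first equation as $c\,\phi'(\xi)=D[\phi](\xi)+\mu-\mu\phi(\xi)-\beta\phi(\xi)\psi(\xi)$ and using that $\phi\to s^*$ with $\phi'\to0$ (the latter because $\phi$ is monotone-free but has finite limit and bounded derivatives of all orders, so $\phi'$, $D[\phi]$ both tend to $0$ along any sequence where they converge; more carefully, $\phi\to s^*$ and $\phi$ uniformly continuous force $D[\phi](\xi)\to0$, and then $\phi'(\xi)\to0$ too by the equation only after we know $\psi\phi$ converges — so this needs care). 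The clean route: since $\phi\to s^*>0$ and $\phi$ is bounded with bounded derivatives, $D[\phi](\xi)\to0$ as $\xi\to+\infty$; then the first equation gives $c\,\phi'(\xi)+\beta s^*\psi(\xi)\to\mu-\mu s^*$ modulo terms tending to $0$, but $\phi'$ need not converge a priori. Instead, integrate: for the first equation, $\int_{\xi}^{\xi+1}\big(\mu-\mu\phi-\beta\phi\psi\big)=c(\phi(\xi+1)-\phi(\xi))-\int_\xi^{\xi+1}D[\phi]\to0$, and since $\phi\to s^*$ and $\int_\xi^{\xi+1}D[\phi]\to0$, we obtain $\int_\xi^{\xi+1}\psi(s)\,ds\to(\mu-\mu s^*)/(\beta s^*)=e^*$. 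To upgrade the running average to a pointwise limit, use a compactness/translation argument as in the proof of Lemma~\ref{leminfphi}: any sequence $\xi_k\to+\infty$ has a subsequence along which $(\phi(\cdot+\xi_k),\psi(\cdot+\xi_k))\to(\phi_\infty^{\,*},\psi_\infty^{\,*})$ in $C^\infty_{loc}$, a bounded entire solution of~\eqref{eq:TW}; the first component is the constant $s^*$, so $\phi_\infty^{\,*}{}'\equiv0$ and $D[\phi_\infty^{\,*}]\equiv0$, and the first equation of~\eqref{eq:TW} forces $\beta s^*\psi_\infty^{\,*}\equiv\mu(1-s^*)$, i.e. $\psi_\infty^{\,*}\equiv e^*$. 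Since the limit $e^*$ is independent of the sequence, $\psi(+\infty)=e^*$, which finishes the proof.

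**The main obstacle.** The delicate point is the passage from "$\psi'/\psi\to0$" and "running averages of $\psi$ converge" to "$\psi$ itself converges," i.e. ruling out slow oscillations of $\psi$ around $e^*$. The translation-limit argument handles this cleanly only because the limiting profile for $\phi$ is forced to be the \emph{constant} $s^*$ (thanks to the characteristic-equation step via Proposition~\ref{key}), which then rigidifies the limiting $\psi$-profile to the constant $e^*$ through the first equation of~\eqref{eq:TW} — no maximum principle for the (non-monotone) system is needed. If instead one only knew $\psi(+\infty)$ exists, one runs the same machinery in reverse: $\psi\to\psi_\infty\in(0,+\infty)$ makes the nonlinearity $\beta\phi\psi$ in the first equation have the form (bounded)$\times\psi_\infty+o(1)$, and a Proposition~\ref{key}-type / translation argument on $\phi'/\big(\text{something}\big)$ or directly on translates pins $\phi(+\infty)=s^*$, hence $\psi_\infty=e^*$. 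I expect the referee-proof version to simply do the translation-compactness argument from both ends and invoke Proposition~\ref{key} once to kill the characteristic root, which is the one genuinely non-routine ingredient.
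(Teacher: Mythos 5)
Your proof is correct and, at its core, follows the same route as the paper's: translate by an arbitrary sequence tending to $+\infty$, extract a $C^{\infty}_{loc}(\R)$ limit which is an entire solution of~\eqref{eq:TW}, note that the component whose limit is assumed to exist becomes a constant profile, and read off the constant value of the other component from the corresponding equation; since the value is independent of the sequence, the limit exists. The only real difference is that you identify $\phi(+\infty)=s^*$ through Proposition~\ref{key} applied to $z=\psi'/\psi$ (forcing the root $\omega=0$ from the two-sided bounds on $\psi$), which is valid but an unnecessary detour, since $\liminf_{+\infty}\phi\le s^*\le\limsup_{+\infty}\phi$ is already part of hypothesis~\eqref{bc-plus} and pins the limit immediately (and likewise $\psi(+\infty)=e^*$ in the converse case).
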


\begin{proof}
Assume first that $l=\lim_{\xi\to+\infty}\phi(\xi)$ exists. Property~\eqref{bc-plus} yields $0<l=s^*<1$. Consider now any sequence $\{\xi_k\}$ converging to $+\infty$. Up to extraction of a subsequence, the functions $\xi\mapsto\phi(\xi+\xi_k)$ and $\xi\mapsto\psi(\xi+\xi_k)$ converge in $C^{\infty}_{loc}(\R)$ to some functions $\phi_{\infty}=l=s^*$ and $\psi_{\infty}$ such that
$$\mu\,(1-s^*)-\beta\,s^*\,\psi_{\infty}(\xi)=0\ \hbox{ for all }\xi\in\R.$$
Therefore, the function $\psi_{\infty}$ is identically equal to the constant $\mu(1-s^*)/(\beta s^*)=e^*$. Since the limit does not depend on the sequence $\{\xi_k\}$, one infers that $\lim_{\xi\to+\infty}\psi(\xi)=e^*$.

Conversely, if $L=\lim_{\xi\to+\infty}\psi(\xi)$ exists, property~\eqref{bc-plus} yields $0<L=e^*$. For any sequence $\{\xi_k\}$ converging to $+\infty$, the functions $\xi\mapsto\phi(\xi+\xi_k)$ and $\xi\mapsto\psi(\xi+\xi_k)$ converge in $C^{\infty}_{loc}(\R)$, up to a subsequence, to some functions $\phi_{\infty}$ and $\psi_{\infty}=L=e^*$ such that
$$-(\mu+\gamma)\,e^*+\beta\,e^*\,\phi_{\infty}(\xi)=0\ \hbox{ for all }\xi\in\R.$$
Therefore, the function $\phi_{\infty}$ is identically equal to the constant $(\mu+\gamma)/\beta=s^*$. Since the limit does not depend on the sequence $\{\xi_k\}$, one infers that  $\lim_{\xi\to+\infty}\phi(\xi)=s^*$.

Therefore, if the limit $l=\phi(+\infty)$ or the limit $L=\psi(+\infty)$ exists, then they both exist such that $(\phi(+\infty),\psi(+\infty))=(s^*,e^*)$.
\end{proof}

\begin{rem}{\rm
The condition $c\ge c^*$ in Lemma~\ref{lemendemic} is not a restriction, since we shall prove in Section~\ref{sec5} that, for any solution $(\phi,\psi)$ of~\eqref{eq:TW} satisfying~\eqref{nontrivial} and~\eqref{bc-minus} with speed $c$, there holds $c\ge c^*$.}
\end{rem}

%%%%%%%%%%%%%%%%%%%%%%%%%%%%%%%%%%%%%%%%%%%%%%%

\subsection{Proof of Lemmas~\ref{lemharnack},~\ref{leminfty} and~\ref{lempsi'}}\label{sec32}

In this section, we prove some technical lemmas stated in Section~\ref{sec31}.\hfill\break

\noindent{\it{Proof of Lemma~$\ref{lemharnack}$.}} Although the idea of the proof is similar to the one given in \cite{CG2}, we provide the details here for completeness. Up to multiplication of $u$ by a positive constant and up to a shift in space, one can assume without loss of generality that $u(0)=1$ and it is sufficient to show that $u(\pm 1)\le C=C(M)$. Firstly, since $u'(\xi)\ge-M\,u(\xi)$ for all $\xi\in\R$, the function $\xi\mapsto v(\xi):=u(\xi)\,e^{M\xi}$ is nondecreasing, hence
$$u(-1)\le e^M\,u(0)=e^M.$$
Secondly, for all $\xi\in[0,1]$, one has
$$v'(\xi)=(u'(\xi)+Mu(\xi))\,e^{M\xi}\ge a(\xi)\,u(\xi+1)\,e^{M\xi}\ge\frac{v(\xi+1)\,e^{-M}}{M}\ge\frac{v(1)\,e^{-M}}{M}=\frac{u(1)}{M}.$$
Hence, $v(\xi)\ge v(0)+u(1)\,\xi/M=1+u(1)\,\xi/M$ for all $\xi\in[0,1]$. In other words,
$$u(\xi)\ge\Big(1+\frac{u(1)\,\xi}{M}\Big)\,e^{-M\xi}\ \hbox{ for all }\xi\in[0,1].$$
Finally, for all $\xi\in[-1/2,0]$,
$$v'(\xi)\ge a(\xi)u(\xi+1)\,e^{M\xi}\ge\frac{e^{M\xi}}{M}\times\Big(1+\frac{u(1)\,(\xi+1)}{M}\Big)\,e^{-M(\xi+1)}\ge\frac{e^{-M}}{M}\times\Big(1+\frac{u(1)}{2M}\Big).$$
Therefore,
$$1=v(0)\ge\underbrace{v(-1/2)}_{\ge0}+\frac{e^{-M}}{2M}\times\Big(1+\frac{u(1)}{2M}\Big)\ge\frac{e^{-M}}{2M}\times\Big(1+\frac{u(1)}{2M}\Big).$$
Hence
$$u(1)\le 2\,M\,\big(2\,M\,e^M-1\big)$$
and the proof of Lemma~\ref{lemharnack} is thereby complete with $C(M)=\max\{e^M,2M(2Me^M-1)\}$.\hfill$\Box$\break

\noindent{\it{Proof of Lemma~$\ref{leminfty}$.}} Let $0<\underline{c}\le\overline{c}$, $\{c_k\}$, $\{(\phi_k,\psi_k)\}$ and $\{\xi_k\}$ be as in the statement and assume by way of contradiction that there are $\epsilon>0$ and a subsequence, still denoted with the same index $k$, such that $\psi_k(\xi_k)\to+\infty$ as $k\to+\infty$ and $\phi_k(\xi_k)\ge\epsilon$ for all $k\in\N$. Since $0<\phi_k<1$ and $\psi_k>0$ in $\R$, the equation~\eqref{eq:TW} for $\phi_k$ (with $c_k\in[\underline{c},\overline{c}]$) implies that $\phi_k'\le(2+\mu)/\underline{c}$ in $\R$. Hence
\be\label{phik}
\phi_k(\xi)\ge\frac{\epsilon}{2}\ \hbox{ for all }\xi\in[\xi_k-\delta,\xi_k]\hbox{ and for all }k\in\N,
\ee
where $\delta=\epsilon\,\underline{c}/(4+2\mu)>0$. On the other hand, since
$$\psi_k'(\xi)\ge\frac{d}{\overline{c}}\,\psi_k(\xi+1)-\frac{2d+\mu+\gamma}{\underline{c}}\,\psi_k(\xi)\ \hbox{ for all }\xi\in\R\hbox{ and for all }k\in\N,$$
Lemma~\ref{lemharnack} applied to the positive functions $\psi_k$ implies that the functions $\xi\mapsto\psi_k(\xi\pm1)/\psi_k(\xi)$ are globally bounded independently of $k\in\mathbb{N}$. Hence, the functions $\psi_k'/\psi_k$ are globally bounded in $\R$ independently of $k\in\mathbb{N}$. Therefore, the limit $\lim_{k\to+\infty}\psi_k(\xi_k)=+\infty$ implies that $0<M_k:=\min_{[\xi_k-\delta,\xi_k]}\psi_k\to+\infty$ as $k\to+\infty$. Now, equation~\eqref{eq:TW} and the inequalities $0<\phi_k<1$ and~\eqref{phik} yield
$$\max_{[\xi_k-\delta,\xi_k]}\phi_k'\le\frac{2+\mu}{\underline{c}}-\frac{\beta\,\epsilon\,M_k}{2\,\overline{c}}\to-\infty\ \hbox{ as }k\to+\infty.$$
This contradicts the global boundedness of the functions $\phi_k$. The proof of Lemma~\ref{leminfty} is thereby complete.\hfill$\Box$\break

\noindent{\it{Proof of Lemma~$\ref{lempsi'}$.}} Assume by way of contradiction that there is no such $\epsilon$. Then there exist a sequence of real numbers $\{c_k\}$ in $[\underline{c},\overline{c}]$, a sequence of solutions $\{(\phi_k,\psi_k)\}$ of~\eqref{eq:TW} with speed $c=c_k$ and $0<\phi_k<1$, $\psi_k>0$ in $\R$, and a sequence of real numbers $\{\xi_k\}$ such that
\be\label{psik}
\psi_k(\xi_k)\to0\hbox{ as }k\to+\infty\ \hbox{ and }\ \psi_k'(\xi_k)\le0\hbox{ for all }k\in\N.
\ee
Up to a shift of the origin, one can assume without loss of generality that
\be\label{xik}
\xi_k=0
\ee
for all $k\in\N$. Up to extraction of a subsequence, one can also assume that $c_k\to c_{\infty}\in[\underline{c},\overline{c}]$ as $k\to+\infty$.

Notice first that Lemma~\ref{lemharnack} and the equations~\eqref{eq:TW} satisfied by $(\phi_k,\psi_k)$ with $c_k\in[\underline{c},\overline{c}]\subset(0,+\infty)$ imply that the sequence $\{\psi'_k/\psi_k\}$ is bounded in $L^{\infty}(\R)$, that is, there is $C>0$ such that $|\psi'_k(\xi)|\le C\,\psi_k(\xi)$ for all $k\in\N$ and $\xi\in\R$. Since $\psi_k(0)\to0^+$ as $k\to+\infty$, it follows that
$$\psi_k\to0\hbox{ locally uniformly in }\R\hbox{ as }k\to+\infty.$$
As a consequence, there also holds that $\psi_k'\to0$ locally uniformly in $\R$ as $k\to+\infty$.

Furthermore, by differentiating the equation~\eqref{eq:TW} satisfied by $\phi_k$, one gets that the functions $\phi'_k$ and $\phi''_k$ are locally bounded (and the functions $\phi_k$ are globally bounded). Therefore, the functions $\phi_k$ converge in $C^1_{loc}(\R)$, up to extraction of a subsequence, to a function $0\le\phi_{\infty}\le1$ solving~\eqref{eq:TW} with speed $c_{\infty}$ and with $\psi=0$, that is,
\be\label{eqphiinfty2}
c_{\infty}\,\phi'_{\infty}=D[\phi_{\infty}]+\mu\,(1-\phi_{\infty})\ \hbox{ in }\R.
\ee
Call $\alpha=\inf_{\R}\phi_{\infty}$ and let $\{\zeta_m\}$ be sequence of real numbers such that $\phi_{\infty}(\zeta_m)\to\alpha$ as $m\to+\infty$. Up to extraction of a subsequence, the functions $\xi\mapsto\phi_{\infty}(\xi+\zeta_m)$ converge as $m\to+\infty$ in $C^{\infty}_{loc}(\R)$ to a function $\Phi_{\infty}$ solving $c_{\infty}\,\Phi_{\infty}'=D[\Phi_{\infty}]+\mu\,(1-\Phi_{\infty})$ in $\R$, $\alpha\le\Phi_{\infty}\le1$ in $\R$ and $\Phi_{\infty}(0)=\alpha$. Consequently, $\Phi_{\infty}'(0)=0$ and $D[\Phi_{\infty}](0)\ge0$, whence $\mu\,(1-\alpha)=\mu\,(1-\Phi_{\infty}(0))\le0$. Thus, $\alpha\ge1$. Since $\alpha=\inf_{\R}\phi_{\infty}$ and $\phi_{\infty}\le1$ in $\R$, one concludes that
$$\phi_{\infty}=1\ \hbox{ in }\R.$$

Now set
$$\Psi_k(\xi)=\frac{\psi_k(\xi)}{\psi_k(0)}$$
for $k\in\N$ and $\xi\in\R$. Since the sequence $\{\psi'_k/\psi_k\}$ is bounded in $L^{\infty}(\R)$, the positive functions $\Psi_k$ are locally bounded, in the sense that $\sup_{k\in\N,\,|\xi|\le R}\Psi_k(\xi)<+\infty$ for all $R>0$. Therefore, the functions
$$\Psi'_k(\xi)=\frac{\psi'_k(\xi)}{\psi_k(0)}=\frac{\psi'_k(\xi)}{\psi_k(\xi)}\times\Psi_k(\xi)$$
are locally bounded too. Since each $\Psi_k$ satisfies
$$-c_k\,\Psi_k''(\xi)+d\,D[\Psi_k'](\xi)-(\mu+\gamma)\,\Psi_k'(\xi)+\beta\,\phi'_k(\xi)\,\Psi_k(\xi)+\beta\,\phi_k(\xi)\,\Psi_k'(\xi)=0$$
in $\R$ and the sequence $\{\phi_k\}$ is bounded in $C^1_{loc}(\R)$, one infers that the functions $\Psi_k''$ are locally bounded too. By Arzela-Ascoli theorem, it follows that, up to extraction of a subsequence, the positive functions $\Psi_k$ converge in $C^1_{loc}(\R)$ to a nonnegative solution $\Psi_{\infty}$ of
\be\label{eqPsiinfty}
c_{\infty}\,\Psi_{\infty}'=d\,D[\Psi_{\infty}]+(\beta-\mu-\gamma)\,\Psi_{\infty}\ \hbox{ in }\R,
\ee
where one used the fact that $\phi_k(\xi)\to\phi_{\infty}(\xi)=1$ as $k\to+\infty$ for all $\xi\in\R$. Furthermore, we claim that $\Psi_{\infty}>0$ in $\R$. Otherwise, there is $\xi_0\in\R$ such that $\Psi_{\infty}(\xi_0)=0$, and $\Psi_{\infty}'(\xi_0)=0$. It follows from~\eqref{eqPsiinfty} applied at $\xi_0$ that $\Psi_{\infty}(\xi_0+1)=\Psi_{\infty}(\xi_0-1)=0$, and then $\Psi_{\infty}(\xi_0+m)=0$ for all $m\in\mathbb{Z}$ by immediate induction. Since $c_{\infty}\Psi_{\infty}'\ge(\beta-\mu-\gamma-2)\,\Psi_{\infty}$ in $\R$, the nonnegative function $\xi\mapsto\Psi_{\infty}(\xi)\,e^{-(\beta-\mu-\gamma-2)\xi/c_{\infty}}$ is nondecreasing. Since it vanishes at $\xi_0+m$ for all $m\in\mathbb{Z}$, one concludes that it is identically equal to $0$, whence $\Psi_{\infty}=0$ in $\R$. This contradicts the fact that $\Psi_{\infty}(0)=1$. Therefore,
$$\Psi_{\infty}(\xi)>0$$
for all $\xi\in\R$.

The continuous function $z:=\Psi_{\infty}'/\Psi_{\infty}$ obeys
\be\label{eqz2}
\frac{c_{\infty}}{d}\,z(\xi)=e^{\int_{\xi}^{\xi+1}z(s)ds}+e^{\int_{\xi}^{\xi-1}z(s)ds}-2+\frac{\beta-\mu-\gamma}{d}\ \hbox{ in }\R.
\ee
Therefore, by Proposition~\ref{key}, $z(\xi)=\Psi_{\infty}'(\xi)/\Psi_{\infty}(\xi)$ has finite limits $\omega_{\pm}$ as $\xi\to\pm\infty$, which are roots of the characteristic equation
$$c_{\infty}\,\omega_{\pm}=d\,(e^{\omega_{\pm}}+e^{-\omega_{\pm}}-2)+\beta-\mu-\gamma.$$
Since $c_{\infty}\ge\underline{c}>0$ and $\beta>\mu+\gamma$, the roots of the previous equation are necessarily positive. In particular, $\Psi_{\infty}'$ is positive at $\pm\infty$. Furthermore, by differentiating~\eqref{eqz2}, one gets that
\be\label{eqz'}
c_{\infty}\,z'(\xi)=d\,(z(\xi+1)-z(\xi))\,\frac{\Psi_{\infty}(\xi+1)}{\Psi_{\infty}(\xi)}+d\,(z(\xi-1)-z(\xi))\,\frac{\Psi_{\infty}(\xi-1)}{\Psi_{\infty}(\xi)}\ \hbox{ in }\R.
\ee
Therefore, if $z$ has a minimum $\underline{\xi}$ in $\R$, then $z'(\underline{\xi})=0$ and $z(\underline{\xi}+1)=z(\underline{\xi}-1)=z(\underline{\xi})$, whence $z(\underline{\xi}+m)=z(\underline{\xi})$ for all $m\in\mathbb{Z}$ by immediate induction. As a consequence,
$$\inf_{\R}z\ge\min\{z(-\infty),z(+\infty)\}>0.$$
Finally, $\Psi_{\infty}'>0$ in $\R$, hence $0<\Psi_{\infty}'(0)=\lim_{k\to+\infty}\Psi_k'(0)=\lim_{k\to+\infty}\psi'_k(0)/\psi_k(0)$ and $\psi'_k(0)>0$ for all $k$ large enough. This contradicts the fact that $\psi'_k(0)\le0$ for all $k\in\N$ (remember~\eqref{psik} and~\eqref{xik}).

As a conclusion, there is $\epsilon>0$ such that $\psi'(\xi)>0$ for any $\xi\in\R$ with $\psi(\xi)\le\epsilon$ for any solution $(\phi,\psi)$ of~\eqref{eq:TW} with $c\in[\underline{c},\overline{c}]$, $0<\phi<1$ and $\psi>0$ in $\R$. The proof of Lemma~\ref{lempsi'} in thereby complete.\hfill$\Box$

%%%%%%%%%%%%%%%%%%%%%%%%%%%%%%%%%%%%%%%%%%%%%%%
%%%%%%%%%%%%%%%%%%%%%%%%%%%%%%%%%%%%%%%%%%%%%%%

\section{The case $c=c^*$}\label{secc*}
\setcounter{equation}{0}

This section is devoted to the proof of the existence of a traveling wave $(\phi,\psi)$ of~\eqref{eq:TW} satisfying~\eqref{nontrivial},~\eqref{bc-minus} and~\eqref{bc-plus} with speed $c=c^*$. To do so, we consider a sequence $\{c_k\}$ of real numbers such that $c_k\in(c^*,c^*+1]$ for each $k\in\N$, and
$$c_k\to c^*\ \hbox{ as }k\to+\infty.$$
For each $k\in\N$, Section~\ref{sec3} provides the existence of a traveling wave $(\phi_k,\psi_k)$ of~\eqref{eq:TW} (with speed $c_k$) satisfying~\eqref{nontrivial},~\eqref{bc-minus} and~\eqref{bc-plus}. The natural strategy is to pass to the limit as $k\to+\infty$, in order to get the existence of a traveling wave with the limiting speed $c^*$. To achieve this goal, we need some a priori bounds for the functions $\psi_k$ in order to get a non-trivial solution at the limit. We also point out that the inequalities~\eqref{ineqphipsi} satisfied by the approximated waves $(\phi_k,\psi_k)$ do not carry over at the limit $c_k\to c^*$ (since the coefficients in the definitions of the lower solutions depend on $c_k$ and degenerate at the limit $c_k\to c^*$). Therefore, we will have to suitably shift and renormalize the approximated waves $(\phi_k,\psi_k)$ before passing to the limit as $k\to+\infty$.

The first a priori bound asserts that the functions $\psi_k$ do not converge to $0$ uniformly as $k\to+\infty$.

\begin{lemma}\label{lemnontrivial}
There holds $\liminf_{k\to+\infty}\|\psi_k\|_{L^{\infty}(\R)}>0$.
\end{lemma}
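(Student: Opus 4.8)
The plan is to argue by contradiction: suppose that, along a subsequence (still indexed by $k$), $\|\psi_k\|_{L^{\infty}(\R)}\to 0$ as $k\to+\infty$. The key observation is that Lemma~\ref{lempsi'}, applied with $\underline{c}=c^*$ and $\overline{c}=c^*+1$ (so that every $c_k$ lies in $[\underline{c},\overline{c}]$), provides a \emph{uniform} $\epsilon>0$ such that $\psi_k(\xi)\le\epsilon\Rightarrow\psi_k'(\xi)>0$ for all $k$ and all $\xi\in\R$. Once $k$ is large enough that $\|\psi_k\|_{L^{\infty}(\R)}\le\epsilon$, this forces $\psi_k'>0$ on all of $\R$, i.e. each $\psi_k$ is strictly increasing on the whole real line.

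Next I would extract a contradiction from this monotonicity together with the boundary behaviour of $\psi_k$. Since $(\phi_k,\psi_k)$ satisfies~\eqref{bc-minus}, we have $\psi_k(-\infty)=0$; since $\psi_k$ is increasing and positive, $L_k:=\psi_k(+\infty)=\sup_{\R}\psi_k=\|\psi_k\|_{L^{\infty}(\R)}\in(0,\epsilon]$ exists for $k$ large. Moreover, $\liminf_{\xi\to+\infty}\psi_k(\xi)=L_k>0$, and the lower bound in~\eqref{bc-plus} in fact says $L_k\le e^*$ while $\limsup_{\xi\to+\infty}\psi_k(\xi)=L_k\ge e^*$, forcing $L_k=e^*$. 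But $e^*=\mu(\sigma-1)/\beta$ is a fixed positive constant independent of $k$, contradicting $L_k=\|\psi_k\|_{L^{\infty}(\R)}\to0$. Thus the supposed subsequence cannot exist and $\liminf_{k\to+\infty}\|\psi_k\|_{L^{\infty}(\R)}>0$.

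Alternatively, and perhaps more robustly (in case one prefers not to invoke the two-sided bound in~\eqref{bc-plus} for the approximating waves but rather only their monotonicity), one can finish as follows: $\psi_k$ increasing with $\psi_k(+\infty)=L_k$ finite implies $\psi_k'\to0$ along a sequence $\xi\to+\infty$, and by the Harnack bound from Lemma~\ref{lemharnack} (which applies uniformly in $k$ since $c_k\in[\underline c,\overline c]$), the ratios $\psi_k(\xi\pm1)/\psi_k(\xi)$ stay bounded, so $D[\psi_k]$ is controlled; passing to the limit in the second equation of~\eqref{eq:TW} at such points gives $(\mu+\gamma-\beta\phi_k)\psi_k\le o(\psi_k)$ near $+\infty$, hence $\phi_k(\xi)\ge s^*-o(1)$ there; plugging this into the first equation of~\eqref{eq:TW} and using $\psi_k\to0$ uniformly yields $\phi_k\to1$ at $+\infty$, contradicting $\phi_k(\xi)\ge s^*$ being compatible only if $\mu(1-\phi_k)=\beta\phi_k\psi_k\to0$ forces $\phi_k\to1$ while simultaneously the endemic-state analysis forces $\phi_-\le s^*<1$. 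Either way one reaches a contradiction.

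The main obstacle I anticipate is the uniformity in $k$: the coefficients $\theta,\rho,\eta,q$ defining the lower solutions, and hence the inequalities~\eqref{ineqphipsi}, degenerate as $c_k\to c^*$, so one cannot use those lower bounds directly. The whole point of having stated Lemmas~\ref{lemharnack},~\ref{leminfty} and especially~\ref{lempsi'} in a form that is \emph{locally uniform in the speed} $c\in[\underline c,\overline c]$ is precisely to sidestep this degeneration; the proof above is short provided one is careful to apply those lemmas with the fixed interval $[\underline c,\overline c]=[c^*,c^*+1]$ rather than for each individual $c_k$.
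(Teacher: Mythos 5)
Your proposal is correct and follows essentially the same route as the paper: argue by contradiction, invoke Lemma~\ref{lempsi'} with the fixed speed interval $[c^*,c^*+1]$ to get a uniform $\epsilon$ forcing $\psi_k$ to be increasing once $\|\psi_k\|_{L^\infty(\R)}\le\epsilon$, deduce that $\psi_k(+\infty)$ exists, and identify it with the fixed constant $e^*>0$ to contradict $\|\psi_k\|_{L^\infty(\R)}\to0$. The only cosmetic difference is that you read off $\psi_k(+\infty)=e^*$ directly from~\eqref{bc-plus} (since $\liminf=\limsup$ when the limit exists), whereas the paper cites Lemma~\ref{lemendemic}; both are valid and the conclusion is the same.
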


\begin{proof}
Assume that the conclusion does not hold. Then, up to extraction of a subsequence, one can assume without loss of generality that $\|\psi_k\|_{L^{\infty}(\R)}\to0$ as $k\to+\infty$. Since $c_k\in[c^*,c^*+1]\subset(0,+\infty)$ for each $k\in\N$, Lemma~\ref{lempsi'} implies that $\psi_k'>0$ in $\R$ for all $k$ large enough. Since each $\psi_k$ is bounded, it follows that the limit $\psi_k(+\infty)$ exists in $\R$, for all $k$ large enough. Since each $(\phi_k,\psi_k)$ satisfies the assumptions of Lemma~\ref{lemendemic}, one then infers in particular that, for all $k$ large enough,
$$\psi_k(+\infty)=e^*=\frac{\mu}{\beta}\,(\sigma-1)>0.$$
This contradicts the fact that $\lim_{k\to+\infty}\|\psi_k\|_{L^{\infty}(\R)}=0$. Thus, the conclusion of Lemma~\ref{lemnontrivial} holds.
\end{proof}

The second key-point is the boundedness of the sequence $\{\psi_k\}$ in $L^{\infty}(\R)$.

\begin{lemma}\label{lembound}
There holds $\limsup_{k\to+\infty}\|\psi_k\|_{L^{\infty}(\R)}<+\infty$.
\end{lemma}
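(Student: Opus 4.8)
The plan is to argue by contradiction. Suppose $\limsup_{k\to+\infty}\|\psi_k\|_{L^{\infty}(\R)}=+\infty$; since each $\psi_k$ is bounded by Lemma~\ref{lempsi}, one can extract a subsequence (not relabelled) along which $m_k:=\|\psi_k\|_{L^{\infty}(\R)}\in(0,+\infty)$ and $m_k\to+\infty$. The first step is to choose, for each large $k$, a point $\xi_k\in\R$ with $\psi_k(\xi_k)\ge(1-1/k)\,m_k$, which forces $\psi_k(\xi_k)\to+\infty$, and to introduce the shifted and renormalised functions
$$\tilde\psi_k(\xi):=\frac{\psi_k(\xi+\xi_k)}{\psi_k(\xi_k)},\qquad \tilde\phi_k(\xi):=\phi_k(\xi+\xi_k),$$
which still solve~\eqref{eq:TW} with speed $c_k$. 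The point of choosing $\xi_k$ almost at the supremum of $\psi_k$ is that one then gets the \emph{global} bound $\tilde\psi_k(\xi)\le m_k/\psi_k(\xi_k)\le k/(k-1)$ for every $\xi\in\R$, together with $\tilde\psi_k(0)=1$.

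Next I would derive uniform estimates. Exactly as in the proof of Lemma~\ref{leminfty}, Lemma~\ref{lemharnack} applied to the positive functions $\psi_k$ (which satisfy $\psi_k'\ge(d/c_k)\psi_k(\cdot+1)-((2d+\mu+\gamma)/c_k)\psi_k$, with $c_k\in[c^*,c^*+1]$ and $c^*>0$) yields a constant $C>0$ \emph{independent of $k$} with $C^{-1}\le\psi_k(\xi\pm1)/\psi_k(\xi)\le C$ for all $\xi\in\R$, whence $|\psi_k'|\le C\psi_k$ in $\R$ by~\eqref{eq:TW} and $0<\phi_k<1$. The same bounds hold for $\tilde\psi_k$, so $\{\tilde\psi_k\}$ is uniformly bounded (by $k/(k-1)\le2$) and equi-Lipschitz, and $\tilde\psi_k(\xi)\ge e^{-C|\xi|}$. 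By Arzel\`a--Ascoli and a diagonal extraction, up to a further subsequence $\tilde\psi_k\to\tilde\psi_{\infty}$ in $C^0_{loc}(\R)$, with $e^{-C|\xi|}\le\tilde\psi_{\infty}(\xi)\le1$ and $\tilde\psi_{\infty}(0)=1$; in particular $0<\tilde\psi_{\infty}\le1$ and $0$ is a global maximum of $\tilde\psi_{\infty}$.

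The heart of the argument, and what I expect to be the main obstacle, is to pass to the limit in the equation while killing the quadratic coupling term. For each fixed $\xi$ one has $\psi_k(\xi+\xi_k)=\tilde\psi_k(\xi)\,\psi_k(\xi_k)\ge e^{-C|\xi|}\psi_k(\xi_k)\to+\infty$, so Lemma~\ref{leminfty} gives $\tilde\phi_k(\xi)=\phi_k(\xi+\xi_k)\to0$; combined with the uniform bound $0<\tilde\psi_k\le k/(k-1)$, this yields $0\le\beta\,\tilde\phi_k\tilde\psi_k\le2\beta\,\tilde\phi_k\to0$ pointwise in $\R$ with a uniform bound. Then, integrating the second equation of~\eqref{eq:TW} for $(\tilde\phi_k,\tilde\psi_k)$ from $0$ to $\xi$ and letting $k\to+\infty$ (using $D[\tilde\psi_k]\to D[\tilde\psi_{\infty}]$ locally uniformly, $c_k\to c^*$, and dominated convergence for the coupling term), one obtains that $\tilde\psi_{\infty}\in C^1(\R)$ solves the \emph{linear} equation
$$c^*\,\tilde\psi_{\infty}'(\xi)=d\,D[\tilde\psi_{\infty}](\xi)-(\mu+\gamma)\,\tilde\psi_{\infty}(\xi)\qquad\text{for all }\xi\in\R.$$

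Finally, evaluating this identity at the global maximum point $0$ of $\tilde\psi_{\infty}$, where $\tilde\psi_{\infty}'(0)=0$ and $D[\tilde\psi_{\infty}](0)=\tilde\psi_{\infty}(1)+\tilde\psi_{\infty}(-1)-2\le0$, gives $0=d\,D[\tilde\psi_{\infty}](0)-(\mu+\gamma)\le-(\mu+\gamma)<0$, a contradiction. Hence $\limsup_{k\to+\infty}\|\psi_k\|_{L^{\infty}(\R)}<+\infty$. (Alternatively, once $\tilde\psi_{\infty}$ is known to be a bounded positive solution of the linear equation, Proposition~\ref{key} applied to $\tilde\psi_{\infty}'/\tilde\psi_{\infty}$ also yields a contradiction, since the characteristic equation $c^*\omega=d(e^{\omega}+e^{-\omega}-2)-(\mu+\gamma)$ has no root at $\omega=0$.)
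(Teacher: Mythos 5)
Your proposal is correct and follows essentially the same route as the paper: contradiction, near-supremum points $\xi_k$, the Harnack estimate of Lemma~\ref{lemharnack} to control $\psi_k'/\psi_k$ uniformly, Lemma~\ref{leminfty} to send $\phi_k(\cdot+\xi_k)\to0$, renormalisation to $\Psi_k=\psi_k(\cdot+\xi_k)/\psi_k(\xi_k)$ with the global bound $\Psi_k\le 1+o(1)$, passage to the limit linear equation $c^*\Psi_\infty'=d\,D[\Psi_\infty]-(\mu+\gamma)\Psi_\infty$, and the contradiction at the interior global maximum $0$ where $\Psi_\infty(0)=1$. The only cosmetic differences are that you obtain positivity of the limit via the explicit bound $e^{-C|\xi|}\le\Psi_\infty$ rather than by quoting the vanishing-propagation argument from the proof of Lemma~\ref{lempsi'}, and that you pass to the limit through the integrated equation rather than directly in $C^1_{loc}$; both are valid.
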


\begin{proof}
Assume that the conclusion does not hold. Then, up to extraction of a subsequence, one has $\|\psi_k\|_{L^{\infty}(\R)}\to+\infty$ as $k\to+\infty$. For each $k\in\N$, since the function $\psi_k$ is bounded and positive in $\R$, there is then $\xi_k\in\R$ such that
\be\label{defxik}
\psi_k(\xi_k)\ge\Big(1-\frac{1}{k+1}\Big)\,\|\psi_k\|_{L^{\infty}(\R)}.
\ee
In particular, $\psi_k(\xi_k)\to+\infty$ as $k\to+\infty$. Furthermore, one has
$$\psi_k'(\xi)\ge\frac{d}{c^*}\,\psi_k(\xi+1)-\frac{2d+\mu+\gamma}{c^*}\,\psi_k(\xi)\ \hbox{ in }\R$$
for all $k\in\N$. Since each $\psi_k$ is positive, it follows from Lemma~\ref{lemharnack} that the functions $\xi\mapsto\psi_k(\xi\pm 1)/\psi_k(\xi)$ are globally bounded in $\R$ independently of $k\in\N$, and so are the functions $\xi\mapsto\psi_k'(\xi)/\psi_k(\xi)$, from the equation~\eqref{eq:TW} satisfied with speed $c_k\in(c^*,c^*+1]$ (remember also that $0<\phi_k<1$ in $\R$). As a consequence,
$$\psi_k(\xi+\xi_k)\mathop{\longrightarrow}_{k\to+\infty}+\infty\ \hbox{ locally uniformly in }\xi\in\R.$$
Lemma~\ref{leminfty} then implies that
$$\Phi_k(\xi):=\phi_k(\xi+\xi_k)\to0$$
as $k\to+\infty$ locally uniformly in $\xi\in\R$.

From the boundedness of the sequence $\{\psi_k'/\psi_k\}$ in $L^{\infty}(\R)$, one also infers that the functions
$$\xi\mapsto\Psi_k(\xi)=\frac{\psi_k(\xi+\xi_k)}{\psi_k(\xi_k)}$$
are locally bounded independently of $k$ (in the sense that $\sup_{k\in\N}\|\Psi_k\|_{L^{\infty}(K)}<+\infty$ for any compact set $K\subset\R$). Each function $\Psi_k$ obeys
$$c_k\,\Psi_k'=d\,D[\Psi_k]-(\mu+\gamma)\,\Psi_k+\beta\,\Phi_k\,\Psi_k\ \hbox{ in }\R,$$
whence the functions $\Psi_k'$ are locally bounded too. From Arzela-Ascoli theorem, the positive functions $\Psi_k$ converge locally uniformly in $\R$, up to extraction of a subsequence, to a continuous nonnegative function $\Psi_{\infty}$. Furthermore, from the above equation and the fact that $\Phi_k\to0$ as $k\to+\infty$ locally uniformly in $\R$ (together with $c_k\to c^*>0$), the functions $\Psi_k'$ converge locally uniformly in $\R$ too. Therefore, the functions $\Psi_k$ converge in $C^1_{loc}(\R)$ to $\Psi_{\infty}$ and the function $\Psi_{\infty}$ satisfies
\be\label{eqpsiinfty2}
c^*\,\Psi_{\infty}'=d\,D[\Psi_{\infty}]-(\mu+\gamma)\,\Psi_{\infty}\ \hbox{ in }\R.
\ee
Notice that this function $\Psi_{\infty}$ is thus automatically of class $C^{\infty}(\R)$. Furthermore, $\Psi_{\infty}$ is nonnegative and $\Psi_{\infty}(0)=\lim_{k\to+\infty}\Psi_k(0)=1$. As in the proof of Lemma~\ref{lempsi'} for the solution of~\eqref{eqPsiinfty}, one then infers that $\Psi_{\infty}$ is positive in $\R$.

Finally, for every $\xi\in\R$, there holds $\psi_k(\xi+\xi_k)\le\|\psi_k\|_{L^{\infty}(\R)}\le(1+1/k)\,\psi_k(\xi_k)$ from~\eqref{defxik}. In other words, $\Psi_k(\xi)\le1+1/k$ for every $\xi\in\R$ and $k\in\N$ with $k\ge1$, whence $\Psi_{\infty}(\xi)\le1$ for every $\xi\in\R$. Therefore, since $\Psi_{\infty}(0)=1$, $0$ is a global maximum of the function $\Psi_{\infty}$, and $\Psi_{\infty}'(0)=0$, $D[\Psi_{\infty}](0)\le0$. The equation~\eqref{eqpsiinfty2} evaluated at $0$ leads to a contradiction, since $\mu$ and $\gamma$ are positive. The proof of Lemma~\ref{lembound} is thereby complete.
\end{proof}

\noindent{\it{End of the proof of Theorem~$\ref{th:exist}$ in case $c=c^*$.}} First of all, Lemma~\ref{lempsi'} applied with $\underline{c}=c^*>0$ and $\overline{c}=c^*+1$ yields the existence of $\epsilon>0$ such that $\Psi'(\xi)>0$ for every $\xi\in\R$ with $\Psi(\xi)\le\epsilon$, and for every solution $(\Phi,\Psi)$ of~\eqref{eq:TW} and~\eqref{nontrivial} with speed $c\in[c^*,c^*+1]$. Without loss of generality, one can assume that
\be\label{choiceeps}
0<\epsilon\le e^*=\frac{\mu}{\beta}\,(\sigma-1).
\ee
Coming back to our solutions $(\phi_k,\psi_k)$ of~\eqref{eq:TW} (with speed $c_k$) satisfying~\eqref{nontrivial},~\eqref{bc-minus} and~\eqref{bc-plus}, it follows from Lemma~\ref{lemnontrivial} and the positivity of each $\psi_k$ that one can also assume without loss of generality that
$$0<\epsilon<\inf_{k\in\N}\|\psi_k\|_{L^{\infty}(\R)}.$$
Therefore, for each $k\in\N$, since $\psi_k(-\infty)=0$ and $\psi_k>0$, there is $\xi_k\in\R$ such that
$$\psi_k(\xi_k)=\epsilon.$$
Shift the origin at $\xi_k$ and denote
$$\tilde{\phi}_k(\xi)=\phi_k(\xi+\xi_k)\ \hbox{ and }\ \tilde{\psi}_k(\xi)=\psi_k(\xi+\xi_k).$$
From Lemma~\ref{lembound}, the sequence $\{\tilde{\psi}_k\}$ is bounded in $L^{\infty}(\R)$. Remember also that $0<\tilde{\phi}_k<1$ in $\R$ and $c_k\to c^*>0$ as $k\to+\infty$. Therefore, up to extraction of a subsequence, the functions $\tilde{\phi}_k$ and $\tilde{\psi}_k$ converge in $C^{\infty}_{loc}(\R)$ to some bounded $C^{\infty}(\R)$ functions $\phi$ and $\psi$ solving~\eqref{eq:TW} with speed $c^*$. Furthermore, $0\le\phi\le1$ and $\psi\ge0$ in $\R$, while
$$\psi(0)=\epsilon>0.$$

In order to complete the proof of Theorem~\ref{th:exist} in case $c=c^*$, one shall show that the pair $(\phi,\psi)$ is non-trivial and satisfies the desired limiting conditions at $\pm\infty$, that is, the conditions~\eqref{nontrivial},~\eqref{bc-minus} and~\eqref{bc-plus} hold.

Let us first show that
$$\psi>0\ \hbox{ in }\R.$$
Indeed, if there is $\xi^*\in\R$ such that $\psi(\xi^*)=0$, then $\psi'(\xi^*)=0$ and equation~\eqref{eq:TW} at $\xi^*$ yields $\psi(\xi^*\pm1)=0$, whence $\psi(\xi^*+m)=0$ for all $m\in\mathbb{Z}$ by immediate induction. But $c^*\psi'\ge-(2d+\mu+\gamma)\,\psi$ in $\R$, whence the function $\xi\mapsto\psi(\xi)\,e^{(2d+\mu+\gamma)\xi/c^*}$ is nondecreasing. Since $\psi\ge0$ in $\R$ and $\psi(\xi^*+m)=0$ for all $m\in\mathbb{Z}$, one infers that $\psi=0$ in $\R$, which is impossible since $\psi(0)=\epsilon>0$. Thus, $\psi>0$ in $\R$. Once the positivity of $\psi$ is known, it follows as in the proof of Lemma~\ref{lemnontrivial0} that
$$0<\phi<1\ \hbox{ in }\R.$$
In other words, the pair $(\phi,\psi)$ fulfills~\eqref{nontrivial}.

Let us then show that the pair $(\phi,\psi)$ satisfies the limiting conditions~\eqref{bc-minus} at $-\infty$. Since the pair $(\phi,\psi)$ solves~\eqref{eq:TW} and~\eqref{nontrivial} with speed $c^*$, the choice of $\epsilon>0$ above and the property $\psi(0)=\epsilon$ imply that $\psi'>0$ in $(-\infty,0]$. In particular, the limit $L=\lim_{\xi\to-\infty}\psi(\xi)$ exists, and $L\in[0,\epsilon)$. If $L>0$, then the same arguments as in the proof of Lemma~\ref{lemendemic} imply that $\phi(-\infty)$ exists and $\phi(-\infty)=(\mu+\gamma)/\beta=1/\sigma\in(0,1)$. The same arguments also yield $L=\psi(-\infty)=\mu(1-\phi(-\infty))/(\beta\phi(-\infty))=(\mu/\beta)(\sigma-1)=e^*$. Hence, $e^*=L<\epsilon$, contradicting~\eqref{choiceeps}. Therefore,
$$L=\psi(-\infty)=0.$$
Furthermore, for any sequence $\{\tilde{\xi}_k\}$ converging to $-\infty$, the functions $\xi\mapsto\phi(\xi+\tilde{\xi}_k)$ and $\xi\mapsto\psi(\xi+\tilde{\xi}_k)$ converge in $C^{\infty}_{loc}(\R)$, up to extraction of a subsequence, to a pair $(\phi_{\infty},0)$, for some function $0\le\phi_{\infty}\le1$ solving~\eqref{eqphiinfty2} with speed $c_{\infty}=c^*$. It follows as in the proof of Lemma~\ref{lempsi'} that $\phi_{\infty}=1$ in $\R$. Since the limit does not depend on the choice the sequence $\{\tilde{\xi}_k\}$, one gets that the limit $\lim_{\xi\to-\infty}\phi(\xi)$ exists, and
$$\phi(-\infty)=1.$$
In other words, the pair $(\phi,\psi)$ satisfies~\eqref{bc-minus}.

Let us finally show that the non-triviality conditions~\eqref{bc-plus} hold at $+\infty$. Firstly, as in the proof of Lemma~\ref{leminfphi}, there holds $\inf_{\R}\phi>0$. Secondly, Lemma~\ref{lempsi'} and~\eqref{nontrivial} imply at once that $\liminf_{\xi\to+\infty}\psi(\xi)>0$. Thirdly, one concludes that $\limsup_{\xi\to+\infty}\phi(\xi)<1$ as in the proof of~\eqref{claimphi} and that~\eqref{liminfsup} holds as in the case $c>c^*$. The solution $(\phi,\psi)$ thus fulfills all desired properties and the proof of Theorem~\ref{th:exist} in case $c=c^*$ is thereby complete.\hfill$\Box$

%%%%%%%%%%%%%%%%%%%%%%%%%%%%%%%%%%%%%%%%%%%%%%%
%%%%%%%%%%%%%%%%%%%%%%%%%%%%%%%%%%%%%%%%%%%%%%%

\section{Non-existence of traveling waves for $c<c^*$}\label{sec5}
\setcounter{equation}{0}

In this section, $(\phi,\psi)$ denotes a classical solution of~\eqref{eq:TW} satisfying~\eqref{nontrivial} and~\eqref{bc-minus}, with a speed $c\in\R$. By classical, we mean that $\phi$ and $\psi$ are of class $C^1(\R)$ (and then of class $C^{\infty}(\R)$) if $c\neq 0$, and that $\phi$ and $\psi$ are continuous if $c=0$. We shall prove that, necessarily, $c\ge c^*$. To do so, we consider separately the cases $c>0$, $c<0$ and $c=0$.

{\it First case: $c>0$.} Since the positive function $\psi$ satisfies
$$\psi'(\xi)\ge\frac{d}{c}\,\psi(\xi+1)-\frac{2d+\mu+\gamma}{c}\,\psi(\xi)$$
for all $\xi\in\R$, Lemma~\ref{lemharnack} implies that the functions $\xi\mapsto\psi(\xi\pm1)/\psi(\xi)$ are bounded, and then so is the function $\xi\mapsto\psi'(\xi)/\psi(\xi)$. Consider now any sequence $\{\xi_k\}$ converging to $-\infty$. The positive functions
$$\xi\mapsto\psi_k(\xi):=\frac{\psi(\xi+\xi_k)}{\psi(\xi_k)}$$
are locally bounded and they satisfy
$$c\,\psi'_k=d\,D[\psi_k]-(\mu+\gamma)\,\psi_k+\beta\,\phi(\cdot+\xi_k)\,\psi_k\ \hbox{ in }\R.$$
Therefore, the functions $\psi'_k$ are locally bounded too (remember that $\phi(-\infty)=1$). From Arzela-Ascoli theorem, up to extraction of a subsequence, the functions $\psi_k$ converge locally uniformly (and then in $C^1_{loc}(\R)$ from the above equation) to a function $\psi_{\infty}$ solving
\be\label{eqpsiinfty3}
c\,\psi_{\infty}'=d\,D[\psi_{\infty}]+(\beta-\mu-\gamma)\,\psi_{\infty}\ \hbox{ in }\R.
\ee
Furthermore, $\psi_{\infty}\ge0$ in $\R$ and $\psi_{\infty}(0)=1$. As in the proof of Lemma~\ref{lempsi'} for the function~$\Psi_{\infty}$ solving~\eqref{eqPsiinfty}, it follows that $\psi_{\infty}>0$ in $\R$. Now, the function $z=\psi_{\infty}'/\psi_{\infty}$ solves
\be\label{eqz3}
\frac{c}{d}\,z(x)=e^{\int_x^{x+1}z(s)ds}+e^{\int_x^{x-1}z(s)ds}-2+\frac{\beta-\mu-\gamma}{d}\ \hbox{ in }\R.
\ee
Proposition~\ref{key} implies that the limits $z(\pm\infty)$ exist in $\R$ and are roots $\omega$ of the equation
$$c\,\omega=d\,(e^{\omega}+e^{-\omega}-2)+\beta-\mu-\gamma.$$
Since $c>0$ and $\beta>\mu+\gamma$, the roots must be positive and $c\ge c^*$ by definition of~$c^*$ in~\eqref{c*}. The proof of the necessity condition is thereby complete in the case $c>0$.

{\it Second case: $c<0$.} Denote $\Phi(\xi)=\phi(-\xi)$ and $\Psi(\xi)=\psi(-\xi)$. The functions $\Phi$ and $\Psi$ satisfy~\eqref{eq:TW} and~\eqref{nontrivial} with speed $|c|>0$, together with the limiting conditions $(\Phi(+\infty),\Psi(+\infty))=(1,0)$. Furthermore, since the positive function $\Psi$ satisfies
$$\Psi'(\xi)\ge\frac{d}{|c|}\,\Psi(\xi+1)-\frac{2d+\mu+\gamma}{|c|}\,\Psi(\xi)$$
and~\eqref{eq:TW} with spped $|c|$, it follows as in the above case $c>0$ that the function $\Psi'/\Psi$ is bounded. Since $\Psi>0$ in $\R$ and $\Psi(+\infty)=0$, one can consider a sequence $\{\xi_k\}$ converging to $+\infty$ such that
$$\Psi'(\xi_k)\le0\ \hbox{ for all }k\in\N.$$
As above, up to extraction of a subsequence, the functions $\xi\mapsto\Psi_k(\xi):=\Psi(\xi+\xi_k)/\Psi(\xi_k)$ converge in $C^1_{loc}(\R)$ to a positive solution $\Psi_{\infty}$ of~\eqref{eqpsiinfty3} with $|c|$ instead of $c$, and such that $\Psi_{\infty}(0)=1$. Furthermore, here, $\Psi_{\infty}'(0)\le0$. The function $Z:=\Psi_{\infty}'/\Psi_{\infty}$ satisfies~\eqref{eqz3} with $|c|$ instead of $c$ and it follows from Proposition~\ref{key} that the limits $Z(\pm\infty)$ exist in $\R$ and are roots $\Omega$ of the equation
$$|c|\,\Omega=d\,(e^{\Omega}+e^{-\Omega}-2)+\beta-\mu-\gamma.$$
Since $\beta>\mu+\gamma$, the roots are positive (and $|c|\ge c^*$). In particular, $Z$ is positive at $\pm\infty$. But $Z(0)=\Psi_{\infty}'(0)/\Psi_{\infty}(0)=\Psi_{\infty}'(0)\le0$. Hence, the continuous function $Z$ has a minimum $\Xi$ in $\R$, that is $Z(\Xi)\le Z(\xi)$ for all $\xi\in\R$. By differentiating the equation satisfied by $Z$, one gets as in~\eqref{eqz'} that
$$|c|\,Z'(\xi)=d\,(Z(\xi+1)-Z(\xi))\,\frac{\Psi_{\infty}(\xi+1)}{\Psi_{\infty}(\xi)}+d\,(Z(\xi-1)-Z(\xi))\,\frac{\Psi_{\infty}(\xi-1)}{\Psi_{\infty}(\xi)}\ \hbox{ in }\R.$$
Hence, $Z(\Xi\pm1)=Z(\Xi)$, and $Z(\Xi+m)=Z(\Xi)=\min_{\R}Z$ for all $m\in\mathbb{Z}$ by immediate induction. Therefore, $Z(\pm\infty)=\min_{\R}Z\le Z(0)\le0$, a contradiction with the positivity of $Z(\pm\infty)$. As a consequence, the case $c<0$ is ruled out.

{\it Third case: $c=0$.} Here, the function $\psi$ satisfies $d\,D[\psi]+(\beta\,\phi-\mu-\gamma)\,\psi=0$ in $\R$. Since $d>0$, $\beta>\mu+\gamma$, $\phi(-\infty)=1$ and $\psi>0$ in $\R$, it follows that there exists $\xi_0\in\R$ such that $D[\psi](\xi)<0$ for all $\xi\le\xi_0$. Denote
$$\theta(\xi)=\psi(\xi)-\psi(\xi+1).$$
The condition $D[\psi]<0$ in $(-\infty,\xi_0]$ means that $\theta(\xi-1)<\theta(\xi)$ for all $\xi\le\xi_0$. Furthermore, since $\psi>0$ in $\R$ and $\psi(-\infty)=0$, there is $\xi_1\le\xi_0$ such that $\theta(\xi_1)<0$. Since $\theta(\xi_1-m)<\theta(\xi_1)$ for all $m\in\N$ with $m\ge1$, one infers that
$$\psi(\xi_1-m)-\psi(\xi_1)=\sum_{j=1}^{m}\theta(\xi_1-j)<m\,\theta(\xi_1)$$
for all $m\in\N$ with $m\ge1$. Thus, $\psi(\xi_1-m)<\psi(\xi_1)+m\,\theta(\xi_1)\to-\infty$ as $m\to+\infty$ since $\theta(\xi_1)<0$. This contradicts the positivity of $\psi$. As a consequence, the case $c=0$ is ruled out too and the proof of Theorem~\ref{th:exist} is thereby complete.\hfill$\Box$

\begin{remark}
We give here another proof of the positivity of $c$ when $\psi$ is bounded (cf.~\cite{GW3}). Since $\phi(-\infty)=1$ and $\beta>\mu+\gamma$, there is a sufficiently large $K$ such that
$$\beta\,\phi(\xi)-\mu-\gamma>\frac{\beta-\mu-\gamma}{2}>0\quad\mbox{for $\xi\in(-\infty,-K)$.}$$
Integrating the second equation of \eqref{eq:TW} from $-\infty$ to $\xi<-K$, using $\psi(-\infty)=0$ and the positivity and boundedness of $\psi$, we obtain
\beaa
c\,\psi(\xi)&=&d\left\{\int_{\xi}^{\xi+1}\psi(s)ds-\int_{\xi-1}^{\xi}\psi(s)ds\right\}+\int_{-\infty}^\xi [\beta\,\phi(s)-\mu-\gamma]\,\psi(s)\,ds\\
&\ge&-d\,\{\sup_{s\in\bR}\psi(s)\}+\frac{\beta-\mu-\gamma}{2}\int_{-\infty}^\xi \psi(s)\,ds
\eeaa
for all $\xi<-K$. It follows that the integral
\beaa
R(\xi):=\int_{-\infty}^\xi \psi(s)\,ds
\eeaa
is well-defined for all $\xi<-K$ (and then for all $\xi\in\R$ by continuity of $\psi$). Integrating the second equation of~\eqref{eq:TW} twice, we obtain
\beaa
c\,R(x)=d\left\{\int_{x}^{x+1}R(\xi)\,d\xi-\int_{x-1}^{x}R(\xi)\,d\xi\right\}+\int_{-\infty}^x\int_{-\infty}^\xi [\beta\,\phi(s)-\mu-\gamma]\,\psi(s)\,ds\,d\xi
\eeaa
for all $x\in\R$. Since $R(\xi)$ is strictly increasing, we conclude that $c>0$.
\end{remark}

%%%%%%%%%%%%%%%%%%%%%%%%%%%%%%%%%%%%%%%%%%%%%%%
%%%%%%%%%%%%%%%%%%%%%%%%%%%%%%%%%%%%%%%%%%%%%%%

\end{document}